\newtheorem{theorem}{Theorem}
\newtheorem{proposition}{Proposition}
\newtheorem{lemma}{Lemma}
\newtheorem{corollary}{Corollary}
\newtheorem{definition}{Definition}
\newtheorem{remark}{Remark}
\newcommand{\R}{\mathbb{R}}
\newcommand{\N}{\mathbb{N}}
\DeclareMathOperator*{\argmin}{arg\,min}
\DeclarePairedDelimiter{\abs}{\lvert}{\rvert}
\DeclarePairedDelimiter{\norm}{\lVert}{\rVert}
\DeclarePairedDelimiter{\pair}{\langle}{\rangle}
\DeclarePairedDelimiter{\inner}{(}{)}
\newcommand{\de}{\mathop{}\!\mathrm{d}}
\newcommand{\dd}{\de}		
\newcommand{\eps}{\varepsilon}
\newcommand\restr[2]{{
  \left.\kern-\nulldelimiterspace 
  #1 
  \vphantom{\big|} 
  \right|_{#2} 
  }}
\DeclareMathOperator*{\supp}{supp}
\DeclareMathOperator*{\atom}{atom}
\DeclareMathOperator*{\sign}{sign}
\DeclareMathOperator{\Prox}{Prox}
\newcommand{\NN}{\mathcal{N}}
\newcommand{\Sd}{\mathbb{S}^d}
\newcommand{\Hi}{L^2(D,\nu)}
\newcommand*{\Lap}{\upDelta}
  \pgfplotsset{compat=newest}
\pgfplotsset{plot coordinates/math parser=false}
 \newlength\figureheight
 \newlength\figurewidth
\newcommand*{\STATE}{\State}
\newcommand*{\WHILE}{\While}
\newcommand*{\ENDWHILE}{\EndWhile}
\renewcommand{\todo}[2][]{\tikzexternaldisable\@todo[#1]{#2}\tikzexternalenable}
\definecolor{dgreen}{RGB}{0, 100, 0}
\newcommand*{\fixed}[1]{#1}
\begin{document}

\title[Nonconvex regularization for sparse neural networks]{Nonconvex regularization for sparse neural networks}

\author{Konstantin Pieper}
\address{%
  Computer Science and Mathematics Division,
  Oak Ridge National Laboratory, One Bethel Valley Road, P.O. Box 2008, MS-6211, Oak Ridge, TN 37831\\
 }
\email{pieperk@ornl.gov}
\author{Armenak Petrosyan}
\address{
  School of Mathematics, Georgia Institute of Technology, 686 Cherry St NW, Atlanta, GA 30332\\
}
\email{petrosyan@math.gatech.edu}
\thanks{
The material in this manuscript is based on work supported by the Laboratory Directed
Research and Development Program at Oak
Ridge National Laboratory (ORNL), managed by UT-Battelle, LLC, under Contract
No.\ DE-AC05-00OR22725 and by the U.S.\ Department of Energy, Office of
Science, Early Career Research Program under award number ERKJ314.
The US government retains and the publisher, by accepting the article for publication, acknowledges that the US government retains a nonexclusive, paid-up, irrevocable, worldwide license to publish or reproduce the published form of this manuscript, or allow others to do so, for US government purposes. DOE will provide public access to these results of federally sponsored research in accordance with the DOE Public Access Plan (\url{http://energy.gov/downloads/doe-public-access-plan})}

\date{\today}

\begin{abstract}
%
  Convex \(\ell_1\) regularization using an infinite dictionary of neurons has been suggested for constructing
  neural networks with desired approximation guarantees, but can be affected by an arbitrary amount of
  over-parametrization. This can lead to a loss of sparsity and result in networks with too many
  active neurons for the given data, in particular if the \fixed{number of data samples} is large.
  As a remedy, \fixed{in this paper}, a nonconvex regularization method is investigated
  \fixed{in the context of shallow ReLU networks}: We prove that in contrast
  to the convex approach, any resulting (locally optimal) network is finite even in the
  presence of infinite data \fixed{(i.e., if the data distribution is known and the limiting
    case of infinite samples is considered)}. Moreover, we show that approximation guarantees and existing
  bounds on the network size for finite data are maintained.
\end{abstract}


\maketitle


\section{Introduction}
\label{sec:intro}


Many currently employed neural network training procedures are based on minimizing nonconvex
functionals, and employ random initialization
and stochastic optimization methods to avoid non-optimal local minima.
An alternative way to address these problems is by convex reformulation of the objective.
Here, the network architecture
can be adapted during training by gradually adding neurons, while the \fixed{output layer} weights can be
penalized by a convex sparsity-promoting functional (or regularization term). The latter \fixed{approach}
has the potential to set redundant network connections to zero during the training, which can be
subsequently removed from the network.
\fixed{Apart from reducing the network size, which improves computational efficiency of
  network evaluation, this also serves the purpose of reducing the generalization error in
  the model due to weight shrinkage.}
For early works on convex neural networks and
their theoretical analysis based on infinite feature spaces and optimization problems on
spaces of measures, we refer
to, e.g., \cite{bengio2006convex, RossetSwirszczSrebroZhu:2007, bach2017breaking}.
Additional characterizations of the associated optimal networks have recently been
provided in~\cite{savarese:2019, parhi2019minimum, debarre2020sparsest, parhi2020neural,
  2019arXiv191001635O, 2019arXiv191002743D}.

In this paper, we demonstrate that convex sparsity promoting penalties
such as the canonical \(\ell_1\) norm employed in the previous works
do not always effectively eliminate redundancy in the presence of large amounts of data, and the
associated training
procedures may still be affected by a certain amount of unnecessary
over-parametrization. As a remedy, we develop a
corresponding framework that incorporates nonconvex penalties but keeps most of the
aforementioned advantages of convex neural networks, including the measure space interpretation.
In order to do that, we focus on shallow networks with one hidden layer, which are of
fundamental importance and -- compared to deep
networks -- are relatively well understood theoretically.
Moreover, we focus attention on the activation function given as the popular ReLU function
$\sigma(z)=\max\{\,z,0\,\}$; however, most of our theory applies to a much larger class of
activation functions and also other kernel based methods.

We define a shallow neural network with $N$ neurons to be a function $\NN\colon \R^d\to \R$ of the form 
\begin{equation}\label{eq:disc_NN}
	\NN_{\omega,c}(x) = \sum_{n=1}^N c_n \, \sigma(a_n \cdot x + b_n),
\end{equation}
where \(\sigma(a_n \cdot x + b_n)\) are the single neurons and \(N\) is also
called the width of the network.
Additionally, by $\omega_n = (a_n, b_n) \in \R^{d+1}$ we denote the nodes consisting of inner weights
\(a_n \in \R^d\) and \(b_n \in \R\). The numbers $c_n\in \R$ are called the outer weights
\fixed{(output layer weights)}.
Let $f$ be a target function defined on some \fixed{set} $D\subseteq\R^d$ (it can be an
image, solution to a PDE, a specific parameter associated with a model, etc.).
The aim is to find  a neural network $\NN$ with as few as possible neurons \(N\)
such that $\NN$ fits the training points $\{(x_k,y_k)\}_{k=1,\dots,K}$. Here, $x_k\in D$
are the input data and \(y_k\) the output data, which we assume to be given as
$y_k=f(x_k)+\varepsilon_k$. 
Additionally, $\varepsilon_k$ represents error, which could be
random in nature (e.g., measurement error) or have a deterministic origin (e.g.,
modeling or computational truncation error).

The network training considered here will be based on the following
minimization problem:
\begin{equation}
\label{eq:phi_problem_finite_nonnormalized}
\min_{N \in \N,\; \{a_n,b_n,c_n\}_{n=1}^N,\;\|a_n\|^2+|b_n|^2\leq 1}\;
  l\left(\NN_{\omega,c};y\right) + \alpha\sum_{n=1}^N\phi(\abs{c_n}).
\end{equation}
For simplicity, we focus our attention on the least squares loss function
\begin{equation}
\label{eq:empirical_tracking}
l\left(\NN_{\omega,c};y\right)
 = l_{K,\fixed{x}}\left(\NN_{\omega,c};y\right)
 := \frac{1}{2K}\sum_{k=1}^K \abs{\NN_{\omega,c}(x_k) - y_k}^2,
\end{equation}
although most of our results can be transferred to a much more general class of
data fidelity terms.
We emphasize that in problem~\eqref{eq:phi_problem_finite_nonnormalized} we do not fix the
width \(N\) of the network, which will be chosen together with the corresponding
coefficients to minimize the objective.
To achieve a compromise between a good fit and a simple network (representing a hopefully
regular function with few neurons \(N\)) we employ a
sparsity promoting sublinear cost term involving the scalar penalty function~\(\phi\) for
the outer weights with hyperparameter \(\alpha>0\).
Note that \(\NN_{\omega,c}\) is linear in \(c\) and that
neurons with \(c_n = 0\) can be dropped from the network.   
For the inner weights, we impose bound constraints. This
does not limit the set of functions that can be written
as~\eqref{eq:disc_NN}, owing to the positive
homogeneity of the ReLU activation function, but prevents the inner weights from growing
arbitrarily large.
Concerning the penalty \(\phi\colon \R_+ \to \R_+\), we always impose the following assumption:
\begin{enumerate}[label=(\ensuremath{\text{A\arabic*}_\phi})]
\item \label{cond:phi_1}
  The function \(\phi\) is a concave, nondecreasing, \fixed{and differentiable} with \(\phi(0) = 0\),
  \(\phi'(0) = 1\), and \(\phi(z) \to +\infty\) for \(z\to +\infty\);
  Moreover, \(\phi\) is \(\gamma\)-convex, i.e., there exists a \(\gamma \geq 0\) such
  that the derivative \(\phi^\prime\) fulfills
  \[
    \fixed{-\gamma (z_2 - z_1) \leq \phi^\prime(z_2) - \phi^\prime(z_1) \leq 0}
    \quad\text{for all } 0 \leq z_1 \leq z_2.
  \]
\end{enumerate}
These assumptions imply that \(\phi\) fulfills \(\phi(z) \leq z\) for all
\(z \geq 0\) and that \(\phi\) is \emph{subadditive}, i.e.\ \(\phi(z_1 + z_2) \leq
\phi(z_1) + \phi(\fixed{z_2})\).
This property enhances the sparsity of the solution, and in the case that
the inequality is strict (referred to as \emph{strongly subadditive}),
it actively promotes it. We will discuss this in more detail below.
Moreover, positive homogeneity of the ReLU activation function and the monotonicity of $\phi$ also imply
that $\norm{(a_n,b_n)}=1$ will always be fulfilled for an optimal solution of the
problem~\eqref{eq:phi_problem_finite_nonnormalized}.  Consequently,
\eqref{eq:phi_problem_finite_nonnormalized} is equivalent to the following problem 
\begin{equation}
    \label{eq:phi_problem}\tag{\ensuremath{P_\phi}}
  \min_{N\in \N,\; \{c_n\} \in \R^N,\; \{\omega_n = (a_n,b_n)\} \in (\Sd)^N}\; l\left(\NN_{\omega,c};y\right) + \alpha\sum_{n=1}^N\phi(\abs{c_n}),
\end{equation}
where  $\Sd = \{(a,b)\in\R^{d+1}\colon \norm{(a,b)}^2 = 1\}$ is the unit sphere in $\R^{d+1}$.

Note that the convex \(\ell_1\) sparsity promoting penalty (i.e.\ \(\phi(z) =
z\)) is still included in the above assumptions. It is known that the problem for \(\phi(z) =
z\) can be understood as a convex problem on the space of measures, and always admits a global
solution with \(N \leq K\). Moreover, this solution can be efficiently approximated with
(generalized) conditional gradient methods (see, e.g.,
\cite{bach2017breaking,bredies2013inverse,boyd2017alternating,2019arXiv190409218P}).
While the \(\ell_1\) problem with \(\phi(z)=z\) has many favorable theoretical properties, it 
does not completely solve the issue of over-parametrization, especially in the case where
\(K\) is large.
In Figure~\ref{fig:intro}, we visualize this with a simple one-dimensional example,
where we can interpret \(\NN_{\omega,c}\) as a piecewise linear spline with knot points
\(x_n = - b_n/a_n\). We see that a nonconvex penalty term is able to substantially reduce
the number of neurons without affecting the quality of the approximation.
\begin{figure}[htbp]
\centering
\begin{subfigure}{.49\linewidth}
  \includegraphics[width=0.9\textwidth]{intro_l1.tikz}
  \caption{Global \(\ell_1\)-solution (with \(\phi(z) = z\)):\\
    \(N = 36\) neurons and \(\norm{\NN_{\omega,c} - f} = 0.044\).}
  \label{fig:intro_l1}
\end{subfigure}
\begin{subfigure}{.49\linewidth}
  \includegraphics[width=0.9\textwidth]{intro_phi.tikz}
  \caption{Local solution for \(\phi(z) = \log(1 + z)\):\\
    \(N = 10\) neurons and \(\norm{\NN_{\omega,c} - f} = 0.026\).}
  \label{fig:intro_phi}
\end{subfigure}
\caption{Comparison of solutions of~\eqref{eq:phi_problem} for \(f(x) = \cos(10(10^{-3}+x^2)^{1/8})\)
  with different convex and noncovex \(\phi\): We choose \(\alpha = 10^{-4}\), \({x_k}\)
  by 5000 uniformly distributed points on the interval \([-1,1]\), \(y_k = f(x_k)+
  \eps_k\) perturbed by white
  noise with std.\ dev.\ \(0.05\).
  \emph{Top:} outer weights \(c_n\) over the knot points \(x_n = -b_n/a_n\).
  \emph{Bottom:} Noisy data \(y_k = f(x_k) + \eps_k\) (blue), \fixed{locally} optimal
  network \(\NN_{\omega,c}\) (black), and knot points of the corresponding linear spline
  (orange).}
\label{fig:intro}
\end{figure}

To shed more light on this effect, we consider the limit of \(l_{K,\fixed{x}}\) where the
number of data points \fixed{\(K\)} grows infinitely large. If we 
interpret the data points $x_k$ to be random samples from a probability
distribution $\nu$ on the domain $D$, the loss function
$l_{K,\fixed{x}}$ can be understood as the empirical estimation of the loss
\[
l_\nu\left(\NN_{\omega,c};y\right)
 = \frac{1}{2}\int_{D} \left(\NN_{\omega,c}(x) - y(x)\right)^2\de\nu(x)
 = \frac{1}{2}\norm{\NN_{\omega,c} - y}^2_{L^2(D,\nu)},
\]
where \(y(x) = f(x) + \varepsilon(x)\), and \(\varepsilon(x)\) is an error term.
Consequently, we consider the problem~\eqref{eq:phi_problem_finite_nonnormalized} with
\(l = l_\nu\), which can be seen as approximating the function in the whole domain~\(D\)
instead of a finite number of points.

Since $N$ is maintained as a free optimization variable in~\eqref{eq:phi_problem},
one may expect that in this case the problem may not have minimizers (local or global): a
network with larger and larger number of neurons could decrease the value of the
functional. That is indeed the case for $\ell_1$ regularization as our numerical
experiments (performed for large \(K\)) indicate.
The solutions of problem~\eqref{eq:phi_problem} with the \(\ell_1\)-penalty (\(\phi(z) =
z\)) tend to form clusters of nodes  \((a_n, b_n)\) with very small coefficients
\(c_n\) (this effect is more severe for a larger number of training data~\(K\)).
The main disadvantage of the  $\ell_1$ cost functional is that it is
not encouraging nodes $(a_n,b_n)$ that are very close to merge into one. 
This is related to the additivity of the absolute value on the positive real axis: if we
replace a node $(a,b)$
with coefficient $c$  by two nodes \((a_1,b_1)\) and \((a_2,b_2)\) very closely
placed to $(a,b)$, with corresponding coefficients \(c_1\) and \(c_2\) (of the same sign as
$c$) with $c=c_1+c_2$ we obtain a network with the same \(\ell_1\)-norm, while potentially
decreasing the fidelity term \(l\).
However, switching to a nonconvex, strongly subadditive penalty,
this effect is remedied; as illustrated in Figure~\ref{fig:intro_phi}.

The above discussion highlights why the $\ell_1$ regularized problem
\begin{equation} \tag{\ensuremath{P_{\ell_1}}}
   \label{eq:l1_problem}
    \min_{N\in \N,\; \{c_n\} \in \R^N,\; \{\omega_n=(a_n,b_n)\} \in (\Sd)^N}\; l\left(\NN_{\omega,c};y\right) + \alpha\sum_{n=1}^N\abs{c_n}
\end{equation}
is not the best choice for sparsity promoting regularization of neural networks.
Certainly, this also affects formulations where we replace the penalty term in~\eqref{eq:l1_problem} with a
constraint \(\norm{c}_{\ell_1} \leq M\), or the fidelity term with a constraint
\(\norm{\NN_{\omega,c} - f} \leq \delta\), since they essentially lead to the same solution
manifolds (parametrized by different hyperparameters \(\alpha\), \(M\), and \(\delta\)).
Moreover, we point out that the global solutions of the problem~\eqref{eq:l1_problem} are also global solutions of
the popular formulation below, employing \(\ell^2\) regularization: 
\begin{equation}
  \label{eq:naive_l2}\tag{\ensuremath{P_{\ell_2,\ell_2}}}
  \min_{N\in \N,\; \{(a_n,b_n,c_n)\} \in (\R^d\times\R\times\R)^N}\; l\left(\NN_{\omega,c};y\right)
  + \frac{\alpha}{2}\sum_{n=1}^N\left[\norm{(a_n,b_n)}^2 + \abs{c_n}^2\right].
\end{equation}
This equivalence relies on the positive homogeneity of the ReLU activation function;
see, e.g.,~\cite{2014arXiv1412.6614N}.
Thus,~\eqref{eq:naive_l2} is surprisingly already equivalent to a sparsity
regularized problem in this setting, and is also affected by the same issues
as~\eqref{eq:l1_problem}.

More generally, if we replace the cost term in~\eqref{eq:naive_l2} with
\(\mathcal{R}(a,b,c) = (1/p) \sum_n \norm{(a_n,b_n)}^p + \abs{c_n}^p\), we obtain the
problem formulation~\eqref{eq:phi_problem} with \(\phi(z) = (2/p)\, z^{p/2}\) (see
Appendix ~\ref{sec:equiv_cond}).
For \(0<p<2\), the choice of  \(\phi(z) = (2/p)\, z^{p/2}\) is concave,
monotonous, and subadditive, and would be appropriate for some parts of this paper.
However, it does not fulfill the other requirements~\ref{cond:phi_1} imposed above,
since it has an unbounded derivative at zero and can not be normalized to fulfill
\(\phi'(0) = 1\).
Since this assumption is crucial for this paper, we instead consider
a \emph{strongly subadditive} function $\phi \colon \R_+ \to \R_+$
fulfilling~\ref{cond:phi_1} and the following additional assumption:
\begin{enumerate}[resume, label=(\ensuremath{\text{A\arabic*}_\phi})]
\item \label{cond:phi_2}
  There exists a \(\widehat{\gamma} > 0\) and \(\widehat{z} > 0\) such that
  \[
    \fixed{\phi'(z_2) - \phi'(z_1) \leq -\widehat{\gamma} (z_2 - z_1)}
    \quad\text{for all } 0 \leq z_1 \leq z_2 \leq \widehat{z}.
  \]
\end{enumerate}
It can be observed that any $\phi$ possessing the properties~\ref{cond:phi_1}
and~\ref{cond:phi_2} also satisfies the inequalities
$z - (\gamma/2) z^2 \leq \phi(z) \leq z - (\widehat{\gamma}/2) z^2$  for $z \in [0,
\widehat{z}]$.
The function
\begin{equation}\label{form:phi_log}
  \phi_{\log,\gamma}(z)
  = \frac{1}{\gamma} \log(1+\gamma z)
  \fixed{
    = \int_0^z \frac{1}{1+\gamma \zeta} \de\zeta
  }
\end{equation}
for \(\gamma > 0\),
which is a scaled version of the log-penalty function (considered in, e.g.,~\cite{mazumder2011sparsenet}),
and its convex combination with the $\ell_1$ norm,
will be the main function of choice for us in the numerical examples. Another option is
the MCP function~\cite{zhang2010nearly},
\[
  \operatorname{MCP}_\gamma(z)
  =
  \fixed{\int_0^z \max\{\,0,\, 1-\gamma \zeta\,\} \de\zeta}
  =
  \begin{cases}
    z - (\gamma/2) z^2 &\text{for } z < 1/\gamma, \\
    1/(2\gamma) &\text{else,}
  \end{cases}
\]
\fixed{which fulfills~\ref{cond:phi_2} for \(\gamma > 0\)}, but lacks the property \(\phi(z) \to +\infty\) for \(z\to+\infty\).
However, a proper convex combination of MCP with \(\ell_1\), e.g.\
\(\phi(z) = (1/2)(z + \operatorname{MCP}_{2\gamma}(z))\), fulfills both~\ref{cond:phi_1}
and~\ref{cond:phi_2}.
\fixed{
We discuss the former choices in the context of the general assumptions in Appendix~\ref{sec:phi_reg}.
}
\fixed{
Another penalty function, the SCAD penalty function, given by
\[
  \operatorname{SCAD}_{\gamma,\lambda}(z)
  =
  \fixed{\int_0^z \min\left\{1,\,\max\{0,\, 1-\gamma(\zeta - \lambda)\}\right\} \de\zeta}
  =
  \begin{cases}
    z   &\text{for } z < \lambda \\
    z - (\gamma/2) (z-\lambda)^2   &\text{for } \lambda \leq z < \lambda + 1/\gamma, \\
    \lambda + 1/(2\gamma)  &\text{else,}
\end{cases}
\]
where \(\lambda > 0\) and \(\gamma > 0\),
does not fulfill~\ref{cond:phi_2} due to being linear close to \(z \approx 0\).}
\begin{figure}[htbp]
\centering
\tikzexternaldisable
\begin{tikzpicture}[
  declare function={
    scad(\x) = (\x < 1) * (\x) +
               and (\x >= 1, \x < 2) * (1.5-(\x-2)^2/2) +
               (\x >= 2) * (1.5)
               ;
    mcp(\x) = (\x < 1) * (\x - (\x)^2/2)   +
              (\x >= 1) * (1/2)
              ;
    logp(\x) = ln(1 + \x)
    ;
    lone(\x) = \x
    ;
    scad_p(\x) = (\x < 1) * (1) +
               and (\x >= 1, \x < 2) * ( -(\x-2)) +
               (\x >= 2) * (0)
               ;
    mcp_p(\x) = (\x < 1) * ( -(\x-1))   +
              (\x >= 1) * (0)
              ;
    logp_p(\x) = 1/(1 + \x)
    ;
    lone_p(\x) = 1
    ;
    ratp(\x) = \x/(1 + \x)
    ;
    ratp_p(\x) = 1/(1 + \x)^2
    ;
  }
]
\begin{axis}[%
    width=3in,
    height=2.5in,
    at={(0in,0in)},
    axis x line=middle,
    axis y line=middle,
    ymin=0, ymax=3,
    ytick={0,0.5,...,3}, ylabel=$y$,
    xmin=, xmax=3,
    xtick={0,0.5,...,3}, xlabel=$x$,
    domain=0:3,samples=201, 
    legend pos=north west,
]

  \addplot [gray,thick] {lone(x)};
  \addlegendentry{$\phi(z) = z$}
  \addplot [black,thick] {logp(x)};
  \addlegendentry{$\phi_{\log,1}$}
  \addplot [black,dashed] {mcp(x)};
  \addlegendentry{MCP$_{1}$}
  \addplot [gray,dashed] {scad(x)};
  \addlegendentry{SCAD$_{1,1}$}

\end{axis}
\begin{axis}[
    width=3in,
    height=2.5in,
    at={(3in,0in)},
    axis x line=middle,
    axis y line=middle,
    ymin=0, ymax=2,
    ytick={0,0.5,...,2}, ylabel=$y$,
    xmin=, xmax=3,
    xtick={0,0.5,...,3}, xlabel=$x$,
    domain=0:3,samples=201, 
]
\addplot [gray,thick] {lone_p(x)};
  \addlegendentry{$\phi^\prime(z) = 1$}
\addplot [black,thick] {logp_p(x)};
  \addlegendentry{$\phi_{\log,1}^\prime$}
\addplot [black,dashed] {mcp_p(x)};
  \addlegendentry{MCP$^\prime_1$}
\addplot [gray,dashed] {scad_p(x)};
  \addlegendentry{SCAD$^\prime_{1,1}$}

\end{axis}
\end{tikzpicture}
\tikzexternalenable
\caption{Comparison of different penalty functions \(\phi\)
  and their derivatives \(\phi'\).
  The \(\ell_1\) and log-penalty fulfill~\ref{cond:phi_1}, MCP and SCAD
  fulfill~\ref{cond:phi_1} aside from \(\phi(z) \to +\infty\) for \(z\to\infty\).
  The log-penalty and MCP also fulfill~\ref{cond:phi_2}.}\label{fig:intro_plot_phi}
\end{figure}
We refer to Figure~\ref{fig:intro_plot_phi} for a visualization of different penalty
functions.

\subsection{Contribution}
For $\phi$ satisfying the conditions~\ref{cond:phi_1} and~\ref{cond:phi_2}
we show that the problem \eqref{eq:phi_problem} has
global and local minimizers (with finite \(N\)) for finite and infinite data (see
Theorem~\ref{thm:local_finiteness}).
Since the existence of finite minimizers of the convex problem relies on the finite
data, and this is not the case for the nonconvex problem,
this is a rather unexpected result.

Since the regularization term in \eqref{eq:phi_problem} is nonconvex, finding the global solution of the problem may
not be feasible. In fact, for a nonconvex optimization problem of similar structure,
finding global minima with very high precision is shown
to be an NP-hard problem; see~\cite{chen2019approximation}.  On the other hand, it is
observed in practice and confirmed in theory (see, e.g., \cite{loh2015regularized}) that local
minima (or stationary points, in general) of nonconvex regularized 
problems tend to be well behaved.
We will develop a similar theory for the problem~\eqref{eq:phi_problem}, where \(N\) is a
free optimization variable. First, we will define a concept of local minimality, which is
based on the notion of locality in the space of shallow neural networks of the form
\eqref{eq:disc_NN} defined in terms of the associated measure
\begin{equation}
  \label{eq:discrete_measure}
  \mu = \sum_{n=1}^N c_n \, \delta_{(a_n,b_n)},
\end{equation}
where $\delta_{(a_n,b_n)}$ is the Dirac measure at $(a_n,b_n)$.
We will refer to these solutions as ``local solutions in the sense of measures'',
which will be further explained in the context of integral neural networks
discussed in Section~\ref{sec:theory}. We only mention that a local solution in our
setting will be any \fixed{measure~\eqref{eq:discrete_measure}, where the outer weights \(c_n\) are minimal on in a suitable
neighborhood}, and where adding to \(\NN_{\omega,c}\) any additional node \((a,b) \in \Sd\)
with suitably small outer weight \(c\) will also increase the training objective; see
Theorem~\ref{thm:nec_suff_local_sol}.

For these local solutions in the sense of measures of~\eqref{eq:phi_problem} (which also include the global
solutions), we show that:
\begin{itemize}
  \item
    They are always finitely supported; see Theorem~\ref{thm:local_finiteness}.
  \item
    In the case \(K < \infty\) it holds \(N \leq K\); see Theorem~\ref{thm:representer_phi}.
  \item
    The approximation error can be estimated by
    \begin{align}
      \label{eq:est_loc_sol}
      l(\NN_{\omega,c};f) \leq  2\, \alpha \fixed{\norm{f}_{\mathcal{W}(D)}} + l(y;f)
    \end{align}
    \fixed{for any \(f \in \mathcal{W}(D)\), as introduced in Section~\ref{subsec:local_sol}}; see~Theorem~\ref{thm:fid_est}.
\end{itemize}
The last point quantitively affirms the assertion that the hyperparameter $\alpha$ can be treated  as a trade-off
between the network sparsity and reconstruction accuracy \fixed{(at least for a
  well-behaved subclass of functions \(f\))}. In particular, it shows that local
solutions in the sense of measures of~\eqref{eq:phi_problem} can reduce the fitting term
\fixed{to a similar level as the level of
noise (or bias)} in the data by an appropriate choice of \(\alpha\), i.e., \fixed{\(\alpha
\leq C \, l(y;f)\)}.

Finally, we propose a method to
algorithmically approximate local solutions  in the sense of measures of~\eqref{eq:phi_problem}. This is based upon
an extension of the methods developed for \(\ell_1\)
regularization~\cite{bredies2013inverse,boyd2017alternating,bach2017breaking,2019arXiv190409218P,flinth2019linear}
to the setting of nonconvex penalties \(\phi\).
Here, we combine adaptive node insertion and deletion with local minimization of the outer
weights (or, optionally, full gradient-based training of all the weights \((a,b,c)\)).
Again, we rely on the property \(\phi'(0) = 1\) and the corresponding optimality
conditions to guide the node insertion and deletion steps. 

\subsection{Limitations}
\fixed{
The deterministic error estimate is one of main points that justify the presented solution
concept. Let us briefly mention several shortcomings of the presented approach and discuss
possible extensions, which are, however, outside of the scope of this paper.
\begin{itemize}
\item
  The bound~\eqref{eq:est_loc_sol} is only applicable to well-behaved (regular) functions in the space \(\mathcal{W}(D)\), which is
  endowed with a norm that is sometimes referred to as the variation of
  \(f = \NN_{\omega,c}\); cf., e.g., \cite{bach2017breaking,klusowski2016risk}, the
  references therein, and Section~\ref{sec:exact_rep} for a detailed discussion.
  Since the variation is defined here in the canonical way (independent of the functional
  \(\phi\)), we can apply known results to extend the bound to less regular functions
  \(g\) (e.g., Lipschitz-continuous).
  We first approximate~\(g\) by a function~\(g_\varepsilon \in \mathcal{W}(D)\)
  with \(l(g;g_\varepsilon) \leq \varepsilon^2\); cf., e.g.,
  \cite[Section~4.3]{bach2017breaking}. Then we can use~\eqref{eq:est_loc_sol} with
  \(f=g_\varepsilon\) and apply the triangle inequality for an estimate with \(g\).
\item
  The error of the fit in~\eqref{eq:est_loc_sol} is defined in terms of the functional
  used for training. In practice, the exact distribution \(\nu\) is usually not
  available or computationally accessible, and training is performed on a sample, using
  \(l_{K,x}\) for noisy data \(y\). However, bounds for the error \(\NN_{\omega,c}-f\) are
  desired for every \(x \in D\). This problem of generalization can be addressed by
  combining~\eqref{eq:est_loc_sol} with estimation bounds for
  \(l_\nu(f;y) - l_{K,x}(f,y)\) over all \(\norm{f}_{\mathcal{W}(D)} \leq \delta\); see,
  e.g., \cite[Section~5]{bach2017breaking}.
\item
  The property \(\phi'(0) = 1\), which is fundamentally required
  for~\eqref{eq:est_loc_sol}, could be replaced by \(\phi'(0) <
  +\infty\) with minor modifications.
  However, as mentioned before, Assumption~\ref{cond:phi_1} excludes the ``\(q\)-norms'' \(\phi(z) = (1/q)\,z^q\) with
  \(q<1\) since here \(\phi'(0) = +\infty\).
  In this case, inserting additional neurons with sufficiently small outer
  weights \(c\) will always increase the objective (due to \(\phi'(0) = +\infty\)) and
  thus the zero measure is always a local solution in the sense of measures. Thus, no
  approximation guarantees can be given for arbitrary local solutions.
  Certainly, there can be local solutions that fit the data properly, but finding them
  algorithmically with gradient based optimization has to rely on an appropriate
  initialization.
  In particular, the node insertion strategy of the algorithm presented in
  Section~\ref{sec:gen_conj_grad} relies on~\(\phi'(0) = 1\) and can
  not be directly adapted to \(\phi(z) = (1/q)\,z^q\).
\end{itemize}
}

\subsection{Related work}
Sparsity has been widely employed with the dual purpose of removing non-informative
connections from neural networks~\cite{2019arXiv190610732E, 2019arXiv190209574G} and also
to guide adaptive architecture search~\cite{bengio2006convex, bach2017breaking, cortes2017adanet};
in our case it is the adaptive choice of the network width.

Training procedures with nonconvex penalties have been employed in order to eliminate certain weights from the network. 
In~\cite{2019arXiv190101021M} a nonconvex penalty with
$\phi(z) = (\beta+1) z / (\beta + z)$ is proposed.
Note, that a rescaled version of this penalty fulfills all the
requirements of our analysis.
Similarly, in~\cite{yang2019deephoyer} a different nonconvex regularization strategy is proposed
that is based on the ratio of $\ell_1$ and $\ell_2$ norms and does not have
separable form as we consider in~\eqref{eq:phi_problem}.
However, unlike discussed here, these works apply the penalty to all the weights
in the network with a fixed architecture and do not consider \(N\) to be variable.
Moreover, the nonconvex regularization approaches proposed above \fixed{rely on random
initialization to find a ``good'' local minimum by local optimization} and do not lead to any
deterministic approximation guarantees, which \fixed{is a general problem known to negatively} affect training
procedures based on random initialization and local all-weight training
(see, e.g., \cite{2020arXiv200107523A}).

Nonconvex penalties for sparse regularization have been considered in the statistics
literature. The functions like SCAD, the MCP and capped $\ell_1$ are popular choices
\cite{fan2001variable,zhang2010analysis,zhang2010nearly,wang2014optimal}.
We remark that the MCP penalty fulfills most of the conditions of our analysis;
cf.\ Figure~\ref{fig:intro_plot_phi}.
However, the dictionary and the data set can be
infinite in our work, and the existing results do not directly apply to the
neural network model we consider here.

Nonconvex functionals on spaces of measures also appear in the study of
gradient regularization, such as problems involving functions of
bounded variation (TV-norm of the gradient).
In fact, such problems initially prompted the characterization of lower-semicontinuity of
nonconvex functionals of measures~\cite{bouchitte1990new,bouchitte1992integral,bouchitte1993relax}, which is
also used for the existence theory provided here.
However, the gradient of a function can only have atomic parts (Dirac delta functions)
in one spatial dimension, and  not in higher spatial dimensions.
Therefore, nonconvex regularizers of the gradient face additional
challenges~\cite{HVW:2015}, and are usually implemented after
discretizing the problem.
We note that the aforementioned works \fixed{do not provide the results from
Section~\ref{sec:theory} on finitely supported minima of optimal solutions under the
additional condition~\ref{cond:phi_2}}.

Integral neural networks have been around for a while and there is a large volume of
work in this direction; see, e.g.,
\cite{bach2017breaking,2019arXiv191001635O}.  Integral neural networks have, in
particular, been used for demonstrating approximation capabilities of shallow neural
networks \cite{barron1993universal,kuurkova1997estimates,ma2019barron}. Under the integral
representation assumption, it is possible to prove certain convergence rates for greedy type
algorithms. Other related work includes integral neural network representation results
like those in  \cite{kainen2010integral,klusowski2018approximation} and the ridgelet
transform \cite{candes1999harmonic, murata1996integral, sonoda2017neural}.

\subsection{Organization}

In Section~\ref{sec:theory}, we develop the main theoretical framework as outlined
above. We introduce integral neural networks, and
extend problems \eqref{eq:l1_problem} and \eqref{eq:phi_problem} to this framework.
Moreover we state the main contributions concerning various
properties of local solutions  in the sense of measures, their existence, necessary
conditions, finiteness, and good fidelity.
Section~\ref{sec:exact_rep} is about the least total variational norm solution of the exact
representation constrained problem. This problem is strongly connected to the fidelity
estimate in Theorem~\ref{thm:fid_est}.
In Section~\ref{sec:gen_conj_grad} we propose \fixed{an} algorithmic solution method,
which is an adaptation of the generalized \fixed{conditional} gradient
method to the nonconvex setting.
Finally, in Section~\ref{sec:numerics}, we illustrate the \fixed{results}
of this paper with concrete examples in one \fixed{to three} dimensions. 
The paper ends with an Appendix, where we provide proofs of the
theorems from Section~\ref{subsec:local_sol}.


\section{General theory}
\label{sec:theory}

Problem~\eqref{eq:phi_problem} is a particular case of a more general framework that will
be discussed in this section. We will make use of the concept of integral neural \fixed{networks}
which is a generalization of the neural network~\eqref{eq:disc_NN} where the sum in
$\NN_{\omega,c}(x)$ is replaced with an integral. We will introduce the extensions of
problems~\eqref{eq:phi_problem} and~\eqref{eq:l1_problem} for integral neural networks,
and obtain various analytic results concerning their local solutions.  Most of the results
here apply to a larger class of activation functions than just the ReLU so the theory will
be developed in this setting.

\subsection{Integral neural networks}\label{subsec:int_neural}

Let $\Omega$ be a compact subset of \fixed{$\R^{d+1}$}. Denote by $M(\Omega)$ the space of
\fixed{real-valued} Borel measures on $\Omega$ of bounded total variation,
and by $\norm{\mu}_{M(\Omega)}$ the total
variation norm of the measure $\mu\in M(\Omega)$.
Consider a \fixed{set \(D \subset \R^d\) and a} function
$\sigma\in C(D\times\Omega)$ such that for some \(\Lambda>0\) it holds
\begin{align}
  \label{eq:Lipschitz_sigma}
  \abs{\sigma(x;\omega_1)-\sigma(x;\omega_2)}
  &\leq\Lambda \fixed{\left(1+\norm{x}\right)} \norm{\omega_1-\omega_2}
  &&\fixed{\text{for all } x\in D, \omega_1,\omega_2 \in \Omega},
\end{align}
\fixed{and \(\abs{\sigma(x;\omega_0)} \leq \Lambda \fixed{(1+\norm{x})}\)
  for some \(\omega_0 \in \Omega\) and all \(x\in D\).}
Note that with the ReLU activation function,
i.e.\ \(\sigma(x;\omega)=\max\{\,a\cdot x+b,\,0\,\}\),
this condition is satisfied with $\Lambda=1$ \fixed{for \(\omega = (a,b) \in \Omega = \Sd\)}. 
An integral neural network is a function of the form
\[
[\NN\mu](x) = \int_{\Omega} \sigma(x;\omega) \de\mu(\omega)
\]
where $\mu\in M(\Omega)$.
Finally, by
\[
\pair{\varphi, \mu} = \int_{\Omega} \varphi(\omega) \de \mu(\omega),
\]
we denote the canonical duality pairing of \(\varphi \in C(\Omega)\) and \(\mu \in
M(\Omega) = C(\Omega)^*\).

To see that the above integral network is an extension of the network in
\eqref{eq:disc_NN}, let $\Omega=\Sd$, $\sigma(x;\omega)=\sigma(a\cdot x+b)$ for
$\omega=(a,b)\in\Sd$, and define the discrete measure
\begin{equation}\label{eq:measure_discrete}
  \mu = \sum_{n=1}^N c_n \delta_{\omega_n} \in M(\Omega).
\end{equation}
Then it can be observed that 
\begin{equation}\label{eq:measure_NN_identification}
  [\NN\mu](x) = \sum_{n=1}^N c_n \sigma(a_n\cdot x+b_n) = \NN_{\omega,c}(x).
\end{equation}
Additionally, it holds that 
\[
  \norm{\mu}_{M(\Omega)} = \sum_{n=1}^N|c_n| = \norm{c}_{\ell_1},
\quad
\pair{\varphi,\mu} = \sum_{n=1}^N \varphi(\omega_n) c_n = \inner{\varphi(\omega), c}_{\R^N},
\]
which relates the total variation norm of \(\mu\) to the \(\ell_1\) norm of \(c\) and
the duality pairing to an Euclidean inner product of the vector
\((\varphi(\omega_n))_n \in \R^N\) with \(c\).

Now, we turn to the loss function.
Let $\nu$ be a probability measure supported on the set $D=\fixed{\supp(\nu)}\subseteq \R^d$ with finite first and
second moments; i.e.\ \(\int_D \fixed{\norm{x}}^2 \de\nu(x) = \norm{x}^2_{L^2(D,\nu)} < \infty\).
Associated to this, we define the Hilbert space \( L^2(D,\nu)\) of square integrable
functions with respect to \(\nu\).
The last property ensures that \(\NN\) is bounded
as an operator from \(M(\Omega)\) to \( L^2(D,\nu)\), where we let
\[
\norm{\NN} = \max_{\omega\in \Omega}\norm{\sigma(\cdot;\omega)}_{L^2(D,\nu)} < \infty
\]
be its operator norm.
\fixed{
Note that \(\norm{\sigma(\cdot;\omega)}_{L^2(D,\nu)}
\leq \norm{\sigma(\cdot;\omega) -
\sigma(\cdot;\omega_0)}_{L^2(D,\nu)} + \norm{\sigma(\cdot;\omega_0)}_{L^2(D,\nu)}\), and
both terms can be bounded using using~\eqref{eq:Lipschitz_sigma} by \(C \Lambda (1 +
\norm{x}_{L^2(D,\nu)})\), where \(C\) only depends on the compact set \(\Omega\).}
Moreover, let $f\in \Hi$
be the target function we aim to approximate with integral neural networks. From the
observation above, \fixed{it can be seen that a consistent} extension of the
$\ell_1$-regularized problem~\eqref{eq:l1_problem} \fixed{from finitely supported to arbitrary measures is given} as
\begin{equation}
  \label{eq:L1_problem}\tag{\ensuremath{P_{L^1}}}
  \min_{\mu \in M(\Omega)} L(\mu) + \alpha\, \norm{\mu}_{M(\Omega)}
\end{equation}
where 
\[
  L(\mu) = l(\NN \mu;y) =  \frac{1}{2} \norm*{\NN\mu-y}^2_{\Hi}.
\]
Here, \(y = f + \varepsilon \in \Hi\) is a potentially biased or noisy
version of function \(f\) to be approximated.
We remark that the empirical functional \(l_K\) from~\eqref{eq:empirical_tracking} is still included in this formulation
by the choice \(\nu = \nu_K = (1/K) \sum_{k} \delta_{x_k}\) and identifying \(y \in
L^2(D,\nu_K)\) with the vector \(y_k = y(x_k) = f(x_k) + \varepsilon(x_k) = f(x_k) + \varepsilon_k\).

To extend the problem~\eqref{eq:phi_problem} for integral neural networks,  we need to
define the analog  of the penalty term in~\eqref{eq:phi_problem} for arbitrary measures
\(\mu\in M(\Omega)\).
To do this, we first recall that any finite measure \(\mu\) can be uniquely decomposed into
an atomic part, which is a (potentially infinite) sum of Dirac-delta measures, and the
remaining continuous part.
Denote by \(\atom \mu = \{\,\omega_n\,\}_n\) the atoms of \(\mu\in M(\Omega)\), of which
there are either a finite number or countably infinitely many.
Then we define 
\begin{equation}
  \label{eq:def_Phi}
    \Phi(\mu) =
    \abs{\mu}\left(\Omega\setminus \atom\mu\right) + \sum_{n} \phi(\abs{\mu}(\{\,\omega_n\,\})),
\end{equation}
where $\abs{\mu}$ is the total variation measure of $\mu$.
\fixed{
Defining the decomposition into atomic and continuous part \(\mu = \mu_{\text{atom}} + \mu_{\text{cont}}\), where
\(\mu_{\text{atom}} = \sum_n c_n \delta_{\omega_n}\) with \(c_n = \mu(\{\,\omega_n\,\})\)
and \(\mu_{\text{cont}} = \mu \rvert_{\Omega \setminus \atom \mu}\),
we can equivalently write
\[
\Phi(\mu) = \norm{\mu_{\text{cont}}}_{M(\Omega)} + \sum_{n} \phi(\abs{c_n}).
\]
}
We note that this functional is weakly lower semicontinuous with respect to weak-\(*\)
convergence (see~\cite[\fixed{Theorem~3.3}]{bouchitte1990new}), which will be important
in the following.
\fixed{Here, the assumption \(\phi'(0)=1\) is essential, otherwise the first term in the
  definition of \(\Phi\) would need to be multiplied by \(\phi'(0)\).}
Moreover, \(\Phi\) is identical to \fixed{the weakly-\(*\) lower semicontinuous envelope
of \(\Phi_{\text{atom}}(\mu) = \sum_{\omega \in \atom\mu} \phi(\abs{\mu}(\{\,\omega\,\}))\);
see~\cite[Theorem~3.2]{bouchitte1993relax}.}
\fixed{Clearly, \(\Phi(\mu) = \norm{\mu}_{M(\Omega)}\) for a
measure with no atoms  and in general the following inequality holds
\[
\phi(\norm{\mu}_{M(\Omega)}) \leq \Phi(\mu) \leq \norm{\mu}_{M(\Omega)},
\]
using the sub-additivity of \(\phi\);
see Lemma~\ref{lem:phi_ineq} in Appendix~\ref{sec:phi_reg}.
}
As the canonical generalization of the problem \eqref{eq:phi_problem}, we then consider the following problem 
\begin{equation}
  \label{eq:Phi_problem}\tag{\ensuremath{P_{\Phi}}}
  \min_{\mu \in M(\Omega)} L(\mu) + \alpha\, \Phi(\mu).
\end{equation}
One advantage we gain from expanding the definition of finite width neural networks to
infinite width neural networks is that measures come equipped with  the  total variation norm topology
that allows us to define a concept of locality in a straightforward way.

\subsection{Local solutions of the $\phi$-regularized problem} \label{subsec:local_sol}

Finding the global solution of the nonconvex problem~\eqref{eq:Phi_problem} may not be
realistic, so instead, we will investigate its local minima, and show that they possess
desirable properties. The first result is to show that minima of the functional
\begin{equation}\label{eq:J-func}
J(\mu) = L(\mu) + \alpha\, \Phi(\mu)
\end{equation}
in fact exist. To this purpose, we introduce an appropriate notion of a local minimum.

\begin{definition}\label{def:local_min_meas}
\(\bar{\mu} \in M(\Omega)\) is a local minimum if there exists an \(\epsilon>0\), such
that
\[
  \fixed{J(\mu) \geq J(\bar{\mu})}\quad\text{for all } \mu \in M(\Omega)
\text{ with } \norm{\mu - \bar{\mu}}_{M(\Omega)} \leq \epsilon.
\]
\end{definition}

The next theorem establishes the existence of minimizers under the minimal
assumptions~\ref{cond:phi_1}, which also cover the $\ell_1$-penalty function.

\begin{theorem}\label{thm:local_attainable}
  If $\phi$ satisfies conditions~\ref{cond:phi_1}, then~\eqref{eq:J-func}
  admits at least one global minimizer \fixed{in \(M(\Omega)\) which is bounded in terms of the
  data:}
  \[
    \fixed{
      \phi\left(\norm{\mu}_{M(\Omega)}\right) \leq \frac{l(0;y)}{\alpha}.
    }
  \]
\end{theorem}
\fixed{The proof is given in Appendix~\ref{sec:phi_reg}.}
Note, that the global minimum is also a local minimum; however, the optimization algorithm
that we employ in practice can only approximate a local minimum.
Moreover, the local and global solutions of~\eqref{eq:Phi_problem} only correspond to
solutions of~\eqref{eq:phi_problem} in a generalized sense; thus far, there is no
guarantee that the solutions are discrete, i.e.\ have a
representation as in~\eqref{eq:measure_discrete}.
To analyze this, we first derive first-order conditions for the local solutions.

To characterize local
solutions of~\eqref{eq:Phi_problem} with a first-order necessary
condition, we require some additional notation. Denote by
\(\nabla L(\mu)\) the gradient of the loss function~\(L\), which is defined as
\[
\pair{\nabla L(\mu), u}
 = \lim_{\tau \to 0} (1/\tau) \left[L(\fixed{\mu} + \tau\,u) - L(\fixed{\mu})\right],\; \forall u\in   M(\Omega).
\]
It  holds that \(\nabla L(\mu) = \NN^* \nabla l(\NN \mu;y) =  \NN^* (\NN\mu - y)\in C(\Omega)\), where 
\[
\NN^*\colon \Hi \to C(\Omega),\quad [\NN^*g](\omega) = \int_{D} \sigma(x;\omega) g(x) \de \nu(x)
\quad\forall g \in \Hi
\]
is the (pre-)adjoint of \(\NN\). The gradient 
$p = \NN^* (\NN\mu - y)\in C(\Omega)$ will be called the dual variable in the
following. We note that the dual variable gives the inner product of the residual
\(\NN\mu - y\) with \(\sigma(\cdot; \omega)\), i.e.
\[
\bar{p}(\omega) = \int_D \sigma(x;\omega) [\NN\mu - y](x) \de\nu(x)
= \inner{\sigma(\cdot;\omega), \NN\mu - y}_{\Hi}.
\]
It serves to characterize the local solutions of~\eqref{eq:Phi_problem} as follows.
\begin{theorem}\label{thm:opt_cond}
Let \(\phi\) fulfill the conditions~\ref{cond:phi_1}.
If \(\bar{\mu}\) is a local minimum of functional $J$, then the optimal dual variable
\(\bar{p} = \nabla L (\bar{\mu}) = \NN^* (\NN\bar{\mu} - y) \in C(\Omega)\) has the following properties:
\begin{align*}
|\bar{p}(\omega)| &\leq \alpha && \text{for } \omega \in \Omega, \\
\bar{p}(\omega) &= - \alpha \, \phi'(\abs{\bar{\mu}(\{\,\omega\,\})}) \, \sign(\bar{\mu})(\omega)
 &&\text{for } \fixed{\bar\mu\text{-almost all }}\omega \in \supp \bar{\mu}.
\end{align*}
Here, \(\sign(\bar\mu) \colon \supp\bar\mu \to \{-1,1\}\) denotes the signum of \fixed{\(\bar\mu\)},
defined $\bar\mu$-a.e.\ uniquely for \(\omega\in\supp\bar \mu\) (by the Hahn decomposition).
\end{theorem}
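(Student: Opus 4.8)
The plan is to derive the two conditions by testing the local minimality of $\bar\mu$ against two families of admissible perturbations: (i) rescalings of the mass already present at a support point, and (ii) addition of a small Dirac mass at an arbitrary point $\omega \in \Omega$. Since $\|\mu - \bar\mu\|_{M(\Omega)} \le \epsilon$ defines the admissible neighborhood, and $L$ is a smooth quadratic functional with $\nabla L(\bar\mu) = \bar p$, the first-order expansion
\[
J(\bar\mu + \tau u) = J(\bar\mu) + \tau \langle \bar p, u\rangle + \alpha\,[\Phi(\bar\mu + \tau u) - \Phi(\bar\mu)] + O(\tau^2)
\]
will be the workhorse; the only subtlety is that $\Phi$ is not differentiable, so I must compute one-sided directional derivatives of $\Phi$ along each chosen perturbation and use that $J(\bar\mu+\tau u) \ge J(\bar\mu)$ forces the first-order term to be nonnegative for every admissible direction.

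First I would handle the support condition. Fix $\omega_0 \in \supp\bar\mu$ that is an atom, write $c_0 = \bar\mu(\{\omega_0\})$ and $s_0 = \sign(c_0)$, and take $u = s_0\,\delta_{\omega_0}$. For $\tau$ small of either sign, $\bar\mu + \tau u$ changes only the atomic weight at $\omega_0$ from $c_0$ to $c_0 + \tau s_0$, so $\Phi(\bar\mu+\tau u) - \Phi(\bar\mu) = \phi(|c_0| + \tau) - \phi(|c_0|)$ for $\tau \ge 0$ and $= \phi(|c_0| - |\tau|) - \phi(|c_0|)$ for $\tau \le 0$; since $\phi$ is differentiable on $(0,\infty)$ (by the $\gamma$-convexity in \ref{cond:phi_1}), the right and left derivatives of $\tau \mapsto \Phi(\bar\mu+\tau u)$ at $0$ are both $\phi'(|c_0|)$ when $c_0 \ne 0$. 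Optimality in both directions then gives $s_0\,\bar p(\omega_0) + \alpha\,\phi'(|c_0|) = 0$, i.e. $\bar p(\omega_0) = -\alpha\,\phi'(|c_0|)\,\sign(\bar\mu)(\omega_0)$. For points of $\supp\bar\mu$ that are not atoms (in the continuous part), the same computation with $u = \sigma_\mu\,\bar\mu\rvert_{B}$ restricted to a small ball $B$ around $\omega_0$, where $\sigma_\mu$ is the Hahn sign, gives $\bar p = -\alpha$ times the sign $\bar\mu$-a.e.\ on the continuous part, which is consistent with $\phi'(0)=1$; a density/continuity argument extends the identity to all of $\supp\bar\mu$.

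Next the global bound $|\bar p(\omega)| \le \alpha$ for all $\omega \in \Omega$. Fix any $\omega \in \Omega$ and any sign $s \in \{-1,1\}$, and take $u = s\,\delta_\omega$ with $\tau > 0$ small. Whether or not $\omega$ is already an atom of $\bar\mu$, the key point is that $\phi$ is concave with $\phi'(0)=1$, so $\phi(z+\tau) - \phi(z) \le \tau\,\phi'(z) \le \tau$ for $z \ge 0$ and likewise $\phi(\tau) \le \tau$ when $\omega$ is new; hence $\Phi(\bar\mu+\tau s\delta_\omega) - \Phi(\bar\mu) \le \tau + o(\tau)$ in the worst case, but more carefully the one-sided derivative is $\phi'(z_\omega) \le 1$ where $z_\omega = |\bar\mu(\{\omega\})|$. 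Then $0 \le J(\bar\mu+\tau u) - J(\bar\mu) = \tau\,s\,\bar p(\omega) + \alpha\tau\,\phi'(z_\omega) + o(\tau)$, giving $s\,\bar p(\omega) \ge -\alpha\,\phi'(z_\omega) \ge -\alpha$; taking both choices of $s$ yields $|\bar p(\omega)| \le \alpha\,\phi'(z_\omega) \le \alpha$. (On $\supp\bar\mu$ this also reproves, and is sharp by, the support equality.)

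The main obstacle I anticipate is bookkeeping around the continuous part of $\bar\mu$ and the precise one-sided derivatives of $\Phi$: I must be sure that adding a Dirac at a point $\omega$ that lies in the support of $\mu_{\text{cont}}$ but is not itself an atom is handled correctly (it creates a new atom of size $\tau$, contributing $\phi(\tau) \approx \tau$, rather than perturbing an existing $\phi$-term), and that splitting/merging of atoms does not produce a more favorable competitor than the single-direction perturbations above — but since we only need a necessary condition, testing against this restricted family of directions suffices, and the weak-$*$ lower semicontinuity of $\Phi$ is not even needed here (it was needed only for existence in Theorem~\ref{thm:local_attainable}). A secondary point is justifying that $\|\tau s\delta_\omega\|_{M(\Omega)} = |\tau| \le \epsilon$ keeps us inside the admissible neighborhood, which is immediate.
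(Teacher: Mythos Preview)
Your approach is essentially the same as the paper's: test local minimality against Dirac perturbations \(u = \pm\delta_\omega\) (treating atoms and non-atoms of \(\bar\mu\) separately) and against a scaling of the continuous part, then read off the first-order inequalities. The only difference is that the paper perturbs by \(u = \pm\bar\mu_{\text{cont}}\) globally rather than by \(\sigma_\mu\,\bar\mu\rvert_B\) on small balls, which yields directly \(-\langle\bar p,\bar\mu_{\text{cont}}\rangle = \alpha\norm{\bar\mu_{\text{cont}}}_{M(\Omega)}\) and hence (combined with \(\abs{\bar p}\leq\alpha\)) the identity \(\bar p = -\alpha\sign\bar\mu\) \(\bar\mu_{\text{cont}}\)-a.e.\ via equality in the duality inequality, avoiding the localization/density step you sketch.
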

We refer to Appendix~\ref{sec:opt_cond} for the proof of this result. We note that this
only gives a necessary condition for optimality, and that the interpretation of the second
condition requires abstract tools from measure theory.

In the previous results, we still include the case \(\phi(z) = z\) corresponding
to~\eqref{eq:L1_problem}. In this situation, we can not guarantee that a solution
of the form~\eqref{eq:measure_discrete} exists (which is also evidenced by our numerical
experiments in Section~\ref{sec:numerics}).
However, if~\ref{cond:phi_2} holds, we derive that the local solutions
to~\eqref{eq:Phi_problem} are finitely supported and thus of the form~\eqref{eq:measure_discrete}.

\begin{theorem}\label{thm:local_finiteness} 
Suppose $\phi$ satisfies conditions~\ref{cond:phi_1} and~\ref{cond:phi_2}.
If \(\bar{\mu}\) is a local solution of~\eqref{eq:Phi_problem}, then there exists
\(N<\infty\), \(\bar{\omega}_n\) and corresponding coefficients
\(\bar{c}_n \neq 0\), \(n = 1,\ldots,N\), with
\( \bar{\mu} = \fixed{\sum_{n=1}^N} \bar{c}_n \delta_{\bar{\omega}_n}\).
\end{theorem}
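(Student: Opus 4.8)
The plan is to argue by contradiction: suppose $\bar\mu$ is a local solution whose support contains infinitely many atoms, or contains a continuous part, and derive a contradiction with local optimality by exhibiting a nearby measure (in total variation) with strictly smaller value of $J$. The key tool is Theorem~\ref{thm:opt_cond}, which constrains the dual variable $\bar p = \NN^*(\NN\bar\mu - y)$: it satisfies $|\bar p(\omega)| \le \alpha$ everywhere, and on $\supp\bar\mu$ it equals $-\alpha\,\phi'(|\bar\mu(\{\omega\})|)\,\sign(\bar\mu)(\omega)$. First I would treat the continuous part. If $\bar\mu_{\mathrm{cont}} \neq 0$, then on a set of positive $|\bar\mu_{\mathrm{cont}}|$-measure we have $\bar p(\omega) = -\alpha\,\phi'(0)\,\sign(\bar\mu)(\omega) = -\alpha\,\sign(\bar\mu)(\omega)$ (since atoms of the continuous part have mass $0$ and $\phi'(0)=1$). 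The idea is then to replace a small piece of the continuous part by a single Dirac at one point $\omega_0$ in its support carrying the same total mass $m$; by subadditivity of $\phi$ together with $\phi(m) < m$ for $m>0$ (strict, via~\ref{cond:phi_2} since $\phi(z) \le z - (\widehat\gamma/2)z^2$ on $[0,\widehat z]$), this strictly decreases $\alpha\Phi$, while $L$ changes by at most $O(m)$ — but one must be careful that $L$ does not decrease less than $\Phi$ does. Actually the cleaner route is to observe that moving mass within a continuous part changes $\Phi$ only through whether points are atoms; so concentrating an amount $m \le \widehat z$ of continuous mass into a single new atom changes $J$ by $L(\mu') - L(\bar\mu) + \alpha(\phi(m) - m)$, and since $L$ is $C^1$ with $\bar p$ satisfying $|\bar p|\le\alpha$ on $\Omega$, the first-order term of $L(\mu')-L(\bar\mu)$ is bounded by $\alpha m$ in absolute value, and this is beaten by $\alpha(\phi(m)-m) \le -\alpha(\widehat\gamma/2) m^2$ only if the first-order terms cancel; one has to choose $\omega_0$ so that $\bar p(\omega_0)\,\sign = -\alpha$, matching the sign so that the linear terms cancel exactly and the negative quadratic term from $\phi$ dominates. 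This establishes $\bar\mu_{\mathrm{cont}} = 0$.

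Next I would rule out infinitely many atoms. Since $\sum_n \phi(|\bar c_n|) < \infty$ (as $\Phi(\bar\mu) < \infty$) and $\phi(z)/z \to 1$ as $z\to 0^+$, infinitely many atoms forces $\bar c_n \to 0$, so all but finitely many satisfy $|\bar c_n| \le \widehat z$. Pick two such atoms $\omega_i, \omega_j$ with the same sign of $\bar c$ (infinitely many share a sign, so this is possible) and both with small mass. The move is to merge them: delete both and place a single atom at, say, $\omega_i$ with coefficient $\bar c_i + \bar c_j$. The change in $\Phi$ is $\phi(|\bar c_i| + |\bar c_j|) - \phi(|\bar c_i|) - \phi(|\bar c_j|)$, which by~\ref{cond:phi_2} is $\le -\widehat\gamma\,|\bar c_i|\,|\bar c_j| \cdot(\text{const}) < 0$ — quantitatively, using $\phi(z) \le z - (\widehat\gamma/2)z^2$ and $\phi(z) \ge z - (\gamma/2)z^2$ on $[0,\widehat z]$, one gets a strict decrease of order $\widehat\gamma |\bar c_i||\bar c_j|$. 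The change in $L$: this move has TV-distance $O(|\bar c_j|)$ from $\bar\mu$, and the first-order change is $\bar p(\omega_i)(\bar c_i + \bar c_j - \bar c_i) - \bar p(\omega_j)\bar c_j = (\bar p(\omega_i) - \bar p(\omega_j))\bar c_j$. Here I would use continuity of $\bar p$ (it lies in $C(\Omega)$) — actually this is the delicate point, since $\omega_i$ and $\omega_j$ need not be close. Instead, the right statement uses Theorem~\ref{thm:opt_cond} directly: $\bar p(\omega_i)\sign(\bar c_i) = -\alpha\phi'(|\bar c_i|)$ and similarly for $j$; since $\bar c_i, \bar c_j$ have the same sign and both $\phi'(|\bar c_i|), \phi'(|\bar c_j|)$ are close to $\phi'(0)=1$ when the masses are small, $\bar p(\omega_i)$ and $\bar p(\omega_j)$ are both $\approx -\alpha\,\sign(\bar c_i)$, so $|\bar p(\omega_i) - \bar p(\omega_j)| \le \alpha(\gamma/\widehat\gamma)\cdot o(1)$ is small. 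More precisely $|\bar p(\omega_i) - \bar p(\omega_j)| \le \alpha\,\gamma\,(|\bar c_i| + |\bar c_j|)$ using the Lipschitz bound on $\phi'$ from~\ref{cond:phi_1}. Hence the first-order change in $L$ is $\le \alpha\gamma(|\bar c_i|+|\bar c_j|)|\bar c_j|$, plus a second-order remainder $O(\|\NN\|^2 |\bar c_j|^2)$, and this is dominated by the $\Phi$-decrease of order $\alpha\widehat\gamma|\bar c_i||\bar c_j|$ provided we pick the two atoms with smallest masses (so $|\bar c_i| + |\bar c_j|$ is small relative to $|\bar c_i|$ — which requires a bit more care; one should instead pick $\omega_j$ to be an atom of small mass and $\omega_i$ an atom of mass bounded below among those sharing its sign, or simply pick both from the tail so all relevant quantities are $o(1)$ and the quadratic-in-$c_j$ terms lose to the $|c_i||c_j|$ term after fixing $\omega_i$ first and sending $|c_j|\to 0$ along the sequence). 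Either way, for $j$ large enough this merged measure has strictly smaller $J$ and lies within $\epsilon$ of $\bar\mu$, contradicting local minimality.

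The main obstacle is the bookkeeping in the second part: making the merging argument quantitative enough that the strictly-negative second-order gain from $\phi$'s strong concavity (condition~\ref{cond:phi_2}) provably beats the combined first-order-in-$L$ error and second-order-in-$L$ error, without assuming the two atoms are spatially close. The resolution is that Theorem~\ref{thm:opt_cond} pins the \emph{values} $\bar p(\omega_i), \bar p(\omega_j)$ (not just continuity), so their difference is controlled purely by the difference of $\phi'$ at the two small masses, which is $O(|\bar c_i| + |\bar c_j|)$ — this is what makes the first-order $L$-term genuinely higher order than the $\Phi$-gain. Once that is set up, one also needs the elementary consequence of~\ref{cond:phi_1}--\ref{cond:phi_2} already noted in the text, namely $z - (\gamma/2)z^2 \le \phi(z) \le z - (\widehat\gamma/2)z^2$ on $[0,\widehat z]$, to convert the merge into a clean inequality $\phi(|\bar c_i| + |\bar c_j|) - \phi(|\bar c_i|) - \phi(|\bar c_j|) \le -\widehat\gamma\,|\bar c_i|\,|\bar c_j| + (\gamma/2)(|\bar c_i|^2 + |\bar c_j|^2) + \ldots$; choosing the atoms from far enough out in the sequence makes the cross term dominate. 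A final remark: the same concentration argument used for $\bar\mu_{\mathrm{cont}}$ also shows $\bar\mu_{\mathrm{cont}} = 0$ must be handled first, because the merge argument presumes a purely atomic measure; combining the two yields that $\bar\mu$ is a finite sum of Diracs, which is the claim.
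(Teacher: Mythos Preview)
There is a genuine gap in the infinite-atoms step. You argue that merging two same-sign atoms $\omega_i,\omega_j$ with small masses $a=|\bar c_i|$, $b=|\bar c_j|$ yields a $\Phi$-decrease of order $\alpha\widehat\gamma\, ab$ which dominates the first-order change in $L$, bounded by $\alpha\gamma(a+b)b$. But these two quantities are of the \emph{same} order and the inequality runs the wrong way (since $\gamma\ge\widehat\gamma$). In fact, a direct computation using Theorem~\ref{thm:opt_cond} shows the $ab$ contributions cancel exactly: with $\mu'=\bar\mu+b(\delta_{\omega_i}-\delta_{\omega_j})$ one gets
\[
J(\mu')-J(\bar\mu)
=\alpha\bigl[\phi(a{+}b)-\phi(a)-b\,\phi'(a)\bigr]
+\alpha\bigl[b\,\phi'(b)-\phi(b)\bigr]
+\tfrac{1}{2}b^2\norm{\sigma(\omega_i,\cdot)-\sigma(\omega_j,\cdot)}_{\Hi}^2,
\]
and each bracket lies between $-(\gamma/2)b^2$ and $-(\widehat\gamma/2)b^2$. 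So the net gain from strong concavity is only $O(\alpha\widehat\gamma\,b^2)$, not $O(\alpha\widehat\gamma\,ab)$; it does \emph{not} dominate the $L$-quadratic term, which is of size $\norm{\NN}^2 b^2$ and wins whenever $\alpha\widehat\gamma<2\norm{\NN}^2$. Your heuristic of ``fixing $\omega_i$ and sending $|c_j|\to 0$'' therefore does not produce a decrease. The same issue is latent in your continuous-part argument: you obtain $\alpha(\phi(m)-m)\le-\alpha(\widehat\gamma/2)m^2$ but never bound the quadratic remainder $\tfrac{1}{2}\norm{\NN(\mu'-\bar\mu)}^2$, which is again $O(\norm{\NN}^2 m^2)$ if the removed mass is not spatially localized.

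The missing idea is that the mass being merged must also be \emph{spatially close}, so that the Lipschitz bound~\eqref{eq:Lischitz_sigma} makes the $L$-quadratic term small. The paper exploits compactness of $\Omega$: for the continuous part it concentrates $\bar\mu\rvert_{B_\delta(\hat\omega)\cap D_+}$ into a single Dirac at $\hat\omega$, giving an $L$-remainder of size $\tfrac{1}{2}\delta^2\Lambda_1^2 C_\delta^2$; for the atomic part it first passes to a subsequence $\omega_n\to\hat\omega$ (which exists by compactness), notes that $\hat\omega$ is not an atom since $\bar p(\hat\omega)=-\alpha$, and replaces the whole tail $\sum_{n\ge N}c_n\delta_{\omega_n}$ by $C_N\delta_{\hat\omega}$. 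In both cases the spatial parameter $\delta$ (resp.\ $\delta_N=\max_{n\ge N}\abs{\omega_n-\hat\omega}$) can be taken below $\sqrt{\alpha\widehat\gamma}/\Lambda_1$ independently of the mass, yielding the desired strict decrease of $J$.
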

\fixed{The proof of this result is given in Appendix~\ref{sec:local_finiteness}.}
In the case of an atomic local minimum, the necessary optimality conditions from
Theorem~\ref{thm:opt_cond} can be further simplified.
\begin{remark}\label{rem:finite}
Let \(\phi\) fulfill the conditions~\ref{cond:phi_1}.
Let $\bar{\mu}$ be a \fixed{finitely supported} local minimum of \(J\) (as in Theorem~\ref{thm:local_finiteness}) with \(\bar{c}_n \neq 0\). Then, the second condition of Theorem~\ref{thm:opt_cond} reads as
\begin{equation}\label{eq:necess_local_min_outer}
\bar{p}(\bar{\omega}_n) = - \alpha\,\phi'(\abs{\bar{c}_n})\sign\bar{c}_n ,\; n = 1,\ldots,N,
\end{equation}
where the dual variable is \(\bar{p} = \NN^*(\NN_{\bar{\omega},\bar{c}} - y)\).
For the optimal \(\bar{\omega}\) define also the functional
\[
  J_{\bar{\omega}}(c) = l(\NN_{\bar{\omega},c};y) + \alpha \sum_{n=1}^N \phi(\abs{c_n}).
\]
It can be seen that
condition \eqref{eq:necess_local_min_outer} is the first order necessary condition of local
optimality of $\bar{c}$ being a local minimizer of \(J_{\bar{\omega}}\).
\fixed{
  This follows from
  \(0 = \partial_{c_n} J_{\bar{\omega}}(\bar{c}_n)
  = \bar{p}(\bar{\omega}_n) + \alpha\,\phi'(\abs{\bar{c}_n})\sign\bar{c}_n\) employing
  straightforward calculations.}
\end{remark}
We note that with~\ref{cond:phi_2}, due to \(\phi'(z) \leq \fixed{1 - \widehat{\gamma} z}\), these
conditions imply that \(\abs{\bar{p}(\bar{\omega}_n)} < \alpha\) for all
\(n = 1,\ldots,N\), in addition to \(\abs{\bar{p}} \leq \alpha\), which holds uniformly on \(\Omega\).
However, for a nonconvex problem, the necessary conditions above are not sufficient for optimality.
The next theorem provides a slightly stronger condition that
turns out to be sufficient for local optimality.
\begin{theorem}\label{thm:nec_suff_local_sol}
Let \(\phi\) fulfill the conditions~\ref{cond:phi_1} and~\ref{cond:phi_2}.
Let $\bar{\mu} = \sum_{n=1}^N \bar{c}_n \delta_{\bar{\omega}_n}$ be a \fixed{finitely supported} measure such that:
\begin{itemize}
\item[i)]
  \(\bar{c} \in (\R\fixed{\setminus\{0\}})^N\) is a local minimum of \(J_{\bar{\omega}}\).
\item[ii)]
  For all \(\omega \in \Omega \setminus \{\,\bar{\omega}_n\,\}_{n=1,\ldots,N}\) it holds 
   \(\abs{\bar p(\omega)} < \alpha\), where \(\bar{p} =
   \NN^*(\NN_{\bar{\omega},\bar{c}} - y)\) is the associated dual variable.
 \end{itemize}
Then, \(\bar{\mu}\) is a local minimum of \(J\) (i.e., a local solution of~\eqref{eq:Phi_problem}).
\end{theorem}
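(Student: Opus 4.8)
The plan is to prove directly that \(J(\mu)\ge J(\bar\mu)\) for every \(\mu\) in a sufficiently small total variation ball around \(\bar\mu\), by decomposing such a \(\mu\) into the component \(\mu_0\) carried by the atoms \(\{\bar\omega_n\}\) of \(\bar\mu\) and the component \(\mu_1\) carried away from them, and controlling these two pieces with hypotheses~i) and~ii) respectively. Throughout we may assume the \(\bar\omega_n\) are pairwise distinct and \(\bar c_n\neq 0\) (this is the atomic representation of \(\bar\mu\)).

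The first step is to upgrade the two hypotheses into a single \emph{uniform} strict bound on the dual variable \(\bar p = \NN^*(\NN_{\bar\omega,\bar c}-y)\). By Remark~\ref{rem:finite}, hypothesis~i) forces \(\bar p(\bar\omega_n) = -\alpha\,\phi'(\abs{\bar c_n})\,\sign\bar c_n\); since \(\bar c_n\neq 0\), assumption~\ref{cond:phi_2} (together with monotonicity of \(\phi'\)) gives \(\phi'(\abs{\bar c_n}) < \phi'(0) = 1\), hence \(\abs{\bar p(\bar\omega_n)} < \alpha\). Combined with~ii), this shows \(\abs{\bar p(\omega)} < \alpha\) for \emph{every} \(\omega\in\Omega\), so by continuity of \(\bar p\) and compactness of \(\Omega\) there is \(\beta>0\) with \(\max_{\Omega}\abs{\bar p}\le\alpha-3\beta\).

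Next, for \(\epsilon>0\) to be chosen and any \(\mu\) with \(\norm{\mu-\bar\mu}_{M(\Omega)}\le\epsilon\), write \(\mu=\mu_0+\mu_1\) with \(\mu_0=\restr{\mu}{\{\bar\omega_n\}}=\sum_n c_n\delta_{\bar\omega_n}\) and \(\mu_1=\restr{\mu}{\Omega\setminus\{\bar\omega_n\}}\). Since \(\mu_0-\bar\mu\) and \(\mu_1\) are mutually singular, \(\sum_n\abs{c_n-\bar c_n}=\norm{\mu_0-\bar\mu}_{M(\Omega)}\le\epsilon\) and \(\norm{\mu_1}_{M(\Omega)}\le\epsilon\). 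Expanding the quadratic loss about \(\mu_0\) and using that \(\Phi\) is additive over mutually singular measures (immediate from~\eqref{eq:def_Phi}), one gets the exact identity
\[
J(\mu) = J_{\bar\omega}(c) + \pair{p_0,\mu_1} + \tfrac{1}{2}\norm{\NN\mu_1}_{\Hi}^2 + \alpha\,\Phi(\mu_1),
\qquad p_0 := \NN^*(\NN\mu_0-y).
\]
By hypothesis~i) there is \(\delta>0\) with \(J_{\bar\omega}(c)\ge J_{\bar\omega}(\bar c)\) whenever \(c\) lies within \(\delta\) of \(\bar c\); since \(\sum_n\abs{c_n-\bar c_n}\le\epsilon\), taking \(\epsilon\le\delta\) bounds the first term below by \(J_{\bar\omega}(\bar c)=J(\bar\mu)\). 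For the rest, drop the nonnegative term \(\tfrac{1}{2}\norm{\NN\mu_1}_{\Hi}^2\); estimate the linear term using \(\norm{p_0-\bar p}_{C(\Omega)}\le C\norm{\mu_0-\bar\mu}_{M(\Omega)}\le C\epsilon\) (boundedness of \(\NN\) and \(\NN^*\)), so that for small \(\epsilon\), \(\norm{p_0}_{C(\Omega)}\le\alpha-2\beta\) and \(\abs{\pair{p_0,\mu_1}}\le(\alpha-2\beta)\norm{\mu_1}_{M(\Omega)}\); and estimate the penalty term by integrating the \(\gamma\)-convexity inequality \(\phi'(z)\ge 1-\gamma z\) to get \(\phi(z)\ge z(1-\tfrac{\gamma}{2}z)\), whence, since every atom of \(\mu_1\) has mass at most \(\epsilon\) and the continuous part of \(\mu_1\) enters \(\Phi\) with weight \(\phi'(0)=1\), one has \(\Phi(\mu_1)\ge(1-\tfrac{\gamma}{2}\epsilon)\norm{\mu_1}_{M(\Omega)}\ge(1-\beta/\alpha)\norm{\mu_1}_{M(\Omega)}\) for \(\epsilon\) small. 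Adding up gives \(J(\mu)-J(\bar\mu)\ge\beta\,\norm{\mu_1}_{M(\Omega)}\ge 0\).

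The conceptually hard part is the second step: neither hypothesis by itself controls \(\bar p\) on all of \(\Omega\) — ii) says nothing at the atoms \(\bar\omega_n\), while i) only pins down the \emph{values} \(\bar p(\bar\omega_n)=-\alpha\phi'(\abs{\bar c_n})\sign\bar c_n\) — and it is precisely the strict concavity near zero encoded in~\ref{cond:phi_2} that turns those values into a strict inequality, yielding \(\max_\Omega\abs{\bar p}<\alpha\). Without this uniform bound, a small amount of new mass \(\mu_1\) placed where \(\abs{\bar p}\) is close to \(\alpha\) would make \(\pair{p_0,\mu_1}\) nearly cancel the \(\alpha\Phi(\mu_1)\) gain to first order, and the quadratic loss term is too weak to rescue the estimate at that order. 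Everything else is bookkeeping with the splitting and the smallness of \(\epsilon\).
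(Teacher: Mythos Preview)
Your proof is correct and follows the same strategy as the paper: decompose \(\mu=\mu_0+\mu_1\) along the atoms of \(\bar\mu\), control the \(\mu_0\)-part by hypothesis~i), and control the \(\mu_1\)-part via the uniform strict bound \(\max_\Omega\abs{\bar p}<\alpha\) (obtained from i), ii), and~\ref{cond:phi_2}) together with \(\phi(z)\ge z-(\gamma/2)z^2\). The only cosmetic difference is that you expand the quadratic loss about \(\mu_0\) and work with \(p_0=\NN^*(\NN\mu_0-y)\), whereas the paper expands about \(\bar\mu\) and carries an explicit cross-term \((\NN(\mu_0-\bar\mu),\NN\widetilde\mu)\) bounded by \(\norm{\NN}^2\epsilon^2\); your version is slightly cleaner but the substance is identical.
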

\begin{proof}
First, since \(\bar{c}\) is a local minimum of \(J_{\bar{\omega}}\), there exists
an \(\epsilon>0\), such that \(J_{\bar{\omega}}(c) \geq J_{\bar{\omega}}(\bar{c})\) for
all \(c \in \R^N\) with \(\norm{c-\bar c}_{\ell_1} \leq \epsilon\). 
Due to Remark~\ref{rem:finite} \fixed{and~\ref{cond:phi_2}},
\(\abs{\bar{p}(\bar{\omega}_n)} = \alpha\phi'(\fixed{\abs{\bar{c}_n}}) < \alpha\phi'(0) =
\alpha\),  and thus there exists a \(\delta > 0\), such that
\(\sup_{\omega\in \Omega}\abs{\bar{p}(\omega)} \leq (1-\delta)\alpha\).
Without restriction, assume in the following that
\(\epsilon \leq \delta\alpha/(\|\NN\|^2+(\alpha\gamma/2))\), where \(\gamma\) is from~\ref{cond:phi_1}.
To verify local optimality of \(\bar{\mu}\), let \(\mu \in M(\Omega)\) be arbitrary with
\(\norm{\mu - \bar{\mu}}_{M(\Omega)} \leq \epsilon\).
By \fixed{decomposing \(\mu\) into atomic and non-atomic part}, we can
write
\[
\mu = \mu_0 + \widetilde\mu,
\quad\text{with }
\mu_0 = \sum_{n=1}^N c_n \delta_{\bar{\omega}_n}
\quad\text{and }
\widetilde{\mu} = \sum_{n} \widetilde{c}_n \delta_{\widetilde{\omega}_n} +
\mu_{\text{cont}},
\]
where \fixed{\(\{\,\widetilde{\omega}_n\,\} = \atom \mu \setminus \atom\bar{\mu}\)},
\({c}_n = {\mu}(\{\,{\fixed{\bar{\omega}}}_n\,\})\), \(\widetilde{c}_n =\widetilde{\mu}(\{\,\widetilde{\omega}_n\,\})\),
and \(\mu_{\text{cont}}\) is the continuous part of \(\mu\).
Therefore, it follows \(\NN\mu = \NN\mu_0 + \NN\widetilde{\mu}\).
Moreover, \(\norm{c-\bar{c}}_{\ell_1} + \norm{\widetilde{c}}_{\ell_1} + \norm{\mu_{\text{cont}}}_{M(\Omega)}
= \norm{\mu - \bar{\mu}}_{M(\Omega)}\leq \epsilon\).
By the quadratic form of the loss, we obtain
\begin{align*}
\frac{1}{2} \norm{\NN\mu - y}^2_{\Hi}
&= \frac{1}{2} \norm{\NN\mu_0 - y}^2_{\Hi}
  + \inner{\NN\mu_0 - y, \NN\widetilde{\mu}}_{\Hi}
  + \frac{1}{2}\norm{\NN\widetilde{\mu}}^2_{\Hi} \\
&\geq
 \frac{1}{2}\norm{\NN_{\bar{\omega}, c} - y}^2_{\Hi}
+ \inner{\NN[\mu_0-\bar{\mu}] , \NN\widetilde{\mu}}_{\Hi} 
+ \inner{\NN\bar{\mu} -  y, \NN\widetilde{\mu}}_{\Hi} \\
&\geq
 \frac{1}{2}\norm{\NN_{\bar{\omega}, c} - y}^2_{\Hi}
- \norm{\NN}^2\epsilon \fixed{\norm{\widetilde{\mu}}_{M(\Omega)}}
+ \pair{\bar{p},\widetilde{\mu}}, 
\end{align*}
using that
\[
  |\inner{\NN[\mu_0-\bar{\mu}] , \NN\widetilde{\mu}}_{\Hi}|\leq\|\NN\|^2
  \|c-\bar{c}\|_{\ell_1}\norm{\fixed{\widetilde{\mu}}}_{M(\Omega)}
  \leq \fixed{ \|\NN\|^2\epsilon \norm{\widetilde{\mu}}_{M(\Omega)}}.
\]
Moreover, for the penalty it holds
\begin{align*}
\Phi(\mu)
= \Phi(\mu_0) + \Phi(\widetilde{\mu})
= \sum_{n=1}^N\phi(\abs{c_n})
 + \sum_{n}\phi(\abs{\widetilde{c}_n})
 + \int_{\Omega} \de\abs{\mu_{\text{cont}}}.
\end{align*}
Combining this, we obtain
\begin{align*}
J(\mu)
&\geq J_{\bar{\omega}}(c) 
- \fixed{ \|\NN\|^2\epsilon \norm{\widetilde{\mu}}_{M(\Omega)}}
+ \int_\Omega \left[\alpha - \abs{\bar{p}}\right] \de\abs{\mu_{\text{cont}}}
+ \sum_{n} \left[\alpha \phi(\abs{\widetilde{c}_n}) -
  \abs{\bar{p}(\widetilde{\omega}_n)}\abs{\widetilde{c}_n}\right].
\end{align*}
By the optimality of \(\bar{c}\), it follows that \(J_{\bar{\omega}}(c) \geq
J_{\bar{\omega}}(\bar{c}) = J(\bar{\mu})\). For the \fixed{third} term, we use that
\[
  \int_\Omega \left[\alpha - \abs{\bar{p}}\right] \de\abs{\mu_{\text{cont}}}
  \geq \delta\alpha \fixed{\int_\Omega\de\abs{\mu_{\text{cont}}}}.
\]
\fixed{For the fourth term we use~\ref{cond:phi_1} for 
\(\phi(\abs{\widetilde{c}_n}) = \phi'(0)\abs{\widetilde{c}_n} + \int_0^{\abs{\widetilde{c}_n}}\phi'(z)-\phi'(0)\de z
\geq \abs{\widetilde{c}_n} - (\gamma/2) \abs{\widetilde{c}_n}^2\) to obtain}
\begin{align*}
\sum_{n} \left[\alpha \phi(\abs{\widetilde{c}_n}) - \abs{\bar{p}(\widetilde{\omega}_n)}\abs{\widetilde{c}_n}\right]
  &\geq \fixed{\sum_{n} \left[\left(\alpha  - \abs{\bar{p}(\widetilde{\omega}_n)}\right)\abs{\widetilde{c}_n}
    - (\alpha\gamma/2) \abs{\widetilde{c}_n}^2\right]} \\
  &\geq \fixed{\delta\alpha \norm{\widetilde{c}}_{\ell_1}
    - (\alpha\gamma/2) \norm{\widetilde{c}}_{\ell_2}^2
    \geq \delta\alpha \norm{\widetilde{c}}_{\ell_1}
    - (\alpha\gamma/2) \norm{\widetilde{c}}_{\ell_1}^2}.
\end{align*}
Combining these estimates, we consequently obtain
\begin{align*}
  J(\mu)
  &\geq J(\bar{\mu}) + \fixed{\delta \alpha \left(\int_\Omega\de\abs{\mu_{\text{cont}}} + \norm{\widetilde{c}}_{\ell_1}\right) -
  (\alpha\gamma/2) \norm{\widetilde{c}}_{\ell_1}^2
    - \|\NN\|^2\epsilon \norm{\widetilde{\mu}}_{M(\Omega)}} \\
  &\geq J(\bar{\mu}) + \fixed{\left[\delta \alpha  -
  \left((\alpha\gamma/2)
    + \|\NN\|^2\right)\epsilon \right]\norm{\widetilde{\mu}}_{M(\Omega)}}\\
  &\geq J(\bar{\mu}).
  \qedhere
\end{align*}
\end{proof}
We can interpret the conditions from the previous result in terms of the original
problem~\eqref{eq:phi_problem}. Condition~i) simply requires the outer
weights \(c\) in~\eqref{eq:phi_problem} to be chosen as a local minimum. Condition~ii)
can be read as follows: Adding any number of additional nodes \(\omega\)
with small outer weights to \(\NN\) will increase the training
objective of~\eqref{eq:phi_problem}.

The next theorem proves that any local minimizer is not only \fixed{finitely supported},
guaranteed by Theorem~\ref{thm:local_finiteness}, but also fits the
data properly. To state the theorem we need to introduce the following
notation; cf.~\cite{bach2017breaking, 2019arXiv191001635O}.  Let $\mathcal{W}(D)$ be the space of functions
$f$ on $D$ that satisfy \(f(x)=[\NN\mu](x)\) for every
$x\in D$ and some $\mu\in M(\Omega)$.
\fixed{A characterization of this space for the ReLU activation function will be provided
below in Section~\ref{sec:exact_rep}.}
For $f\in\mathcal{W}(D)$, denote
\begin{equation}\label{eq:c_f}
  \|f\|_{\mathcal{W}(D)}=\min_{\mu\in M(\Omega)}\norm{\mu}_{M(\Omega)}
  \quad\text{subject to } f(x)=[\NN\mu](x)\quad \text{for every } x\in D.
\end{equation}
Note that, for any $f\in \mathcal{W}(D)$, there exists a minimizer \(\mu_f\)
with \(f = \NN \mu_f\) such that \( \|f\|_{\mathcal{W}(D)} = \norm{\mu_f}_{M(\Omega)}\).
The existence of the optimal measure in~\eqref{eq:c_f} follows from
the direct method of variational calculus by showing that a minimizing sequence has a
\fixed{convergent subsequence in the weak-\(*\) sense (cf.\ Appendix~\ref{sec:phi_reg})}.

\begin{theorem}\label{thm:fid_est}
If $\phi$ satisfies conditions~\ref{cond:phi_1}, and
$f\in\mathcal{W}(D)$ then, for any local solution \fixed{\(\bar{\mu}\)} of~\eqref{eq:Phi_problem},
\[
\norm{\NN\bar{\mu} - f}_{\Hi}^2
 \leq  2\, \alpha\, \norm{f}_{\mathcal{W}(D)} + \norm{y - f}_{\Hi}^2.
\]
\end{theorem}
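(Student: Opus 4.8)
The plan is to combine the first--order conditions of Theorem~\ref{thm:opt_cond} with a Young-type splitting of the squared residual. Writing $a = \NN\bar\mu - f$ and $b = \NN\bar\mu - y$ in $\Hi$, we have $a = b + (y-f)$, so expanding $\|a\|_\Hi^2 = \langle a,b\rangle_\Hi + \langle a, y-f\rangle_\Hi$ and estimating the last term by $\langle a, y-f\rangle_\Hi \le \tfrac12\|a\|_\Hi^2 + \tfrac12\|y-f\|_\Hi^2$ gives
\[
\|\NN\bar\mu - f\|_\Hi^2 \;\le\; 2\,\langle \NN\bar\mu - f,\; \NN\bar\mu - y\rangle_\Hi + \|y-f\|_\Hi^2 .
\]
Thus it suffices to show that the cross term is at most $\alpha\,\|f\|_{\mathcal{W}(D)}$.

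For this I would fix an optimal representing measure $\mu_f \in M(\Omega)$ for $f$, i.e.\ $\NN\mu_f = f$ on $D$ with $\|\mu_f\|_{M(\Omega)} = \|f\|_{\mathcal{W}(D)}$, whose existence is recorded after~\eqref{eq:c_f}. Using the adjoint identity $\langle \NN\nu, g\rangle_\Hi = \langle\nu, \NN^*g\rangle$ together with $\NN\mu_f = f$ and the definition $\bar p = \NN^*(\NN\bar\mu - y)$ of the dual variable, the cross term decomposes as
\[
\langle \NN\bar\mu - f,\; \NN\bar\mu - y\rangle_\Hi \;=\; \langle \bar\mu, \bar p\rangle - \langle \mu_f, \bar p\rangle .
\]
The term $-\langle\mu_f,\bar p\rangle$ is bounded by the uniform estimate $|\bar p| \le \alpha$ on $\Omega$ from Theorem~\ref{thm:opt_cond}: $-\langle\mu_f,\bar p\rangle \le \|\bar p\|_{C(\Omega)}\,\|\mu_f\|_{M(\Omega)} \le \alpha\,\|f\|_{\mathcal{W}(D)}$. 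For $\langle\bar\mu,\bar p\rangle$, the support condition $\bar p(\omega) = -\alpha\,\phi'(|\bar\mu(\{\omega\})|)\,\sign(\bar\mu)(\omega)$ on $\supp\bar\mu$ yields, upon integrating against $\bar\mu = \sign(\bar\mu)\,|\bar\mu|$, the identity $\langle\bar\mu,\bar p\rangle = -\alpha\int_{\supp\bar\mu}\phi'(|\bar\mu(\{\omega\})|)\,\de|\bar\mu|(\omega) \le 0$, since $\phi'\ge 0$ by monotonicity of $\phi$. (Equivalently, $\langle\bar\mu,\bar p\rangle \le 0$ follows directly from local minimality by testing against the shrunk competitors $(1-t)\bar\mu$ and letting $t\to 0^+$, using $\Phi((1-t)\bar\mu)\le\Phi(\bar\mu)$.) Combining the two bounds gives $\langle\NN\bar\mu - f,\NN\bar\mu - y\rangle_\Hi \le \alpha\,\|f\|_{\mathcal{W}(D)}$, and substituting into the first display completes the proof.

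I expect no serious obstacle: the substantive content --- the first--order conditions of Theorem~\ref{thm:opt_cond} --- is already available, and only uses the minimal hypothesis~\ref{cond:phi_1}, which is precisely why no finiteness of $\bar\mu$ (Theorem~\ref{thm:local_finiteness}) is needed here. The one point demanding a little care is making the integral $\int_{\supp\bar\mu}\bar p\,\de\bar\mu$ and the $\bar\mu$-a.e.\ defined $\sign(\bar\mu)$ precise when $\bar\mu$ carries a nontrivial continuous part; this is handled by the Hahn decomposition exactly as in the proof of Theorem~\ref{thm:opt_cond}, or bypassed entirely by the shrinking-competitor variant.
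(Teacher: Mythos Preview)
Your proposal is correct and follows essentially the same approach as the paper: the identical splitting $\|\NN\bar\mu - f\|^2 = \langle \NN\bar\mu - f,\NN\bar\mu - y\rangle + \langle \NN\bar\mu - f,y-f\rangle$, Young's inequality on the second term, rewriting the cross term as $\langle\bar p,\bar\mu\rangle - \langle\bar p,\mu_f\rangle$ via $\NN\mu_f = f$, and then the two bounds $\langle\bar p,\bar\mu\rangle\le 0$ and $-\langle\bar p,\mu_f\rangle\le\alpha\|\mu_f\|_{M(\Omega)}$ from Theorem~\ref{thm:opt_cond}. Your parenthetical shrinking-competitor argument for $\langle\bar p,\bar\mu\rangle\le 0$ is a clean alternative that sidesteps the measure-theoretic integration, but the paper chooses the direct computation via the support condition just as you do first.
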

\begin{proof}
Let $\mu_f$ be such that $f(x)=[\NN\mu_f](x)$ for all $x\in D$ and $\norm{\mu_f}_{M(\Omega)} =  \|f\|_{\mathcal{W}(D)}$. 
Then, for any local minimizer $\bar{\mu}$ of~\eqref{eq:Phi_problem}, we have
\begin{align*}
 \norm{\NN\bar{\mu} - f}_{\Hi}^2&=
 \inner{\NN\bar{\mu} - y, \NN(\bar{\mu} - \mu_f)}_{\Hi}
 +\inner{y - f, \NN\bar{\mu} - f}_{\Hi}
 \\
 &\leq\pair{\NN^*(\NN\bar{\mu} - y), \bar\mu - \mu_f}
 + \frac{1}{2}\norm{y-f}^2_{\Hi} + \frac{1}{2} \norm{\NN\bar\mu - f}_{\Hi}^2,
\end{align*}
using Young's inequality. With \(\bar{p} = \NN^*(\NN\bar{\mu} - y)\) and bringing the
last term to the left-hand side, we arrive at
\begin{align*}
  \frac{1}{2}\norm{\NN\bar{\mu} - f}_{\Hi}^2
  \leq  \pair{\bar{p}, \bar\mu} - \pair{\bar{p}, \mu_f}
 + \frac{1}{2}\norm{y-f}^2_{\Hi}.
\end{align*}
Now, we can estimate the first term by zero, due to
\begin{align*}
 \pair{\bar{p},\bar{\mu}}
= - \alpha \int_\Omega
  \phi'(\abs{\bar{\mu}(\{\,\omega\,\})})\sign(\bar{\mu})(\omega)\de\bar{\mu}(\omega) 
= - \alpha \int_\Omega  \phi'(\abs{\bar{\mu}(\{\,\omega\,\})})\de\abs{\bar{\mu}}(\omega)
\leq 0 ,
\end{align*}
using the optimality conditions for \(\bar{\mu}\)
from Theorem~\ref{thm:opt_cond}. Finally, the second term is estimated as
\(-\pair{\bar{p}, \mu_f} \leq \alpha \norm{\mu_f}_{M(\Omega)}\) using
\(\norm{\bar{p}}_{C(\Omega)}\leq \alpha\), resulting in the desired estimate.
\end{proof}

The condition $f\in \mathcal{W}(D)$ may appear restrictive, but it turns out that
a large class of functions is included.  In particular, when
$\sigma$ is given by the ReLU function, all sufficiently smooth functions are contained
in $\mathcal{W}(D)$. Moreover, if additionally $D = \R^d$, the
quantity $\|f\|_{\mathcal{W}(\R^d)}$ can be explicitly computed. We will discuss this topic further in
Section~\ref{sec:exact_rep}.  

Finally, we give an additional upper bound in the case of finite data.
The representer theorem for the $\ell_1$ minimization (see, e.g.,
\cite[Section~2.2]{bach2017breaking}) claims that the problem~\eqref{eq:L1_problem} has a global
solution with at most $K$ nodes. A similar but stronger version of the representer
theorem holds for the problem~\eqref{eq:Phi_problem}, which is proved in Appendix~\ref{sec:representer}.
\begin{theorem}\label{thm:representer_phi}
  If $\phi$ satisfies conditions~\ref{cond:phi_1} and~\ref{cond:phi_2}, and $\supp(\nu) =
  \{\,x_k\,\}_{k=1,\ldots,K}$ is a finite set with $K$ points then  any
  local solution of \eqref{eq:Phi_problem} has at most $K$ nodes.
\end{theorem}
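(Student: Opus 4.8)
The starting point is Theorem~\ref{thm:local_finiteness}: a local solution of~\eqref{eq:phi_problem} is a finite measure $\bar{\mu} = \sum_{n=1}^N \bar{c}_n\,\delta_{\bar{\omega}_n}$ with $\bar{c}_n\neq 0$, and by Remark~\ref{rem:finite} the coefficient vector $\bar{c}$ is a local minimizer of $J_{\bar{\omega}}$. Since $\supp\nu = \{x_k\}_{k=1}^K$, the fidelity term depends on $c$ only through the $K$ values $\NN_{\bar{\omega},c}(x_k) = (Vc)_k$, where $V$ is the $K\times N$ matrix with entries $V_{kn} = \sigma(\bar{\omega}_n,x_k)$; in other words $l(\NN_{\bar{\omega},c};y) = \ell(Vc)$ for the convex quadratic $\ell(z) = \tfrac{1}{2K}\sum_k(z_k-y_k)^2$. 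Assume, for contradiction, that $N > K$. Then $\operatorname{rank} V \le K < N$, so $\ker V$ is nontrivial, and on the affine subspace $\bar{c}+\ker V$ the functional reduces to the constant $\ell(V\bar{c})$ plus the separable penalty $\alpha\sum_n\phi(\abs{c_n})$.

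The idea is then to play local minimality against concavity of $\phi$. Fix $\lambda\in\ker V\setminus\{0\}$; for $\abs{t}$ small enough that no $\bar{c}_n+t\lambda_n$ changes sign, the map $g(t) := \sum_n\phi(\abs{\bar{c}_n}+t\,\sign(\bar{c}_n)\lambda_n)$ is a finite sum of concave functions, hence concave, and $t=0$ is one of its local minima because $J_{\bar{\omega}}(\bar{c}+t\lambda) = \mathrm{const}+\alpha\,g(t)$. A concave function with an interior local minimum is constant in a neighborhood of that point, and a sum of concave functions can be constant on an interval only if each summand is affine there; hence $\phi$ must be affine in a neighborhood of $\abs{\bar{c}_n}$ for every $n$ with $\lambda_n\neq 0$. (This is exactly the statement that the second-order necessary condition for $\bar{c}$ to minimize $J_{\bar{\omega}}$ forces the curvature of $c\mapsto\sum_n\phi(\abs{c_n})$ to vanish along $\ker V$.) Let $S$ denote the set of indices $n$ at which $\phi$ is \emph{not} affine near $\abs{\bar{c}_n}$; then every vector in $\ker V$ is supported off $S$, so $\abs{S}\le N-\dim\ker V = \operatorname{rank} V\le K$.

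To finish one must upgrade this to $S=\{1,\dots,N\}$, which forces $N=\abs{S}\le K$; this is where assumption~\ref{cond:phi_2} is needed, and I expect it to be the main obstacle. By~\ref{cond:phi_2}, $\phi$ is strictly concave on $[0,\widehat{z}]$, so every index with $\abs{\bar{c}_n}<\widehat{z}$ belongs to $S$; in particular, if $\phi$ is strictly concave on all of $\R_+$ — as is the case for the log-penalty $\phi_{\log,\gamma}$ — then $S=\{1,\dots,N\}$ immediately and the proof is complete. The delicate case is a local solution whose atoms all carry coefficients $\abs{\bar{c}_n}>\widehat{z}$ lying on a flat piece of $\phi$; to exclude it I would revisit the optimality conditions of Theorem~\ref{thm:opt_cond} — which give $\abs{\bar{p}}\le\alpha$ on $\Omega$ and $\abs{\bar{p}(\bar{\omega}_n)}=\alpha\,\phi'(\abs{\bar{c}_n})<\alpha$ — and combine $\phi'(0)=1$ with the strong subadditivity of $\phi$ to build, along $\bar{c}+\ker V$, a competitor that either strictly decreases $J$ or removes an atom, contradicting local optimality; alternatively, an a priori bound $\abs{\bar{c}_n}\le\widehat{z}$ valid at local solutions would suffice. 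The overall structure parallels the classical $\ell_1$ representer theorem, with the strong subadditivity of $\phi$ being precisely what upgrades ``some global solution has at most $K$ nodes'' to a statement about \emph{every} local solution.
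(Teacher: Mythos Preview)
Your approach is essentially the paper's: assume $N>K$, pick a nontrivial kernel direction of the $K\times N$ feature matrix, perturb $\bar{c}$ along it (so that $\NN_{\bar\omega,c}$ and hence the loss are unchanged), and use concavity of $\phi$ to contradict local minimality. The paper does this more tersely --- it sets $\bar{\mu}_\tau = \sum_n (1+\tau\lambda_n)\bar{c}_n\delta_{\bar\omega_n}$, notes $\NN\bar{\mu}_\tau = \NN\bar{\mu}$, and then asserts the \emph{strict} inequality
\[
\Phi(\bar{\mu}_\tau)-\Phi(\bar{\mu}) \;<\; \tau\,\alpha\sum_n\phi'(|\bar{c}_n|)\,\lambda_n\,|\bar{c}_n|
\]
``by the strict concavity of $\phi$'', after which choosing the sign of $\tau$ yields the contradiction. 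Your decomposition via $g(t)$ and the set $S$ is a cleaner way to see exactly where strict concavity is being used, but it is the same argument.

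You are right to isolate the ``delicate case'' at the end, and in fact the paper's own proof glosses over precisely this point. Assumption~\ref{cond:phi_2} guarantees strict concavity only on $[0,\widehat{z}]$, and the paper itself lists $\phi(z)=\tfrac12\bigl(z+\operatorname{MCP}_{2\gamma}(z)\bigr)$ as admissible, which is affine for $z>1/(2\gamma)$. If every $|\bar{c}_n|$ with $\lambda_n\neq 0$ lies on such an affine piece, the displayed inequality becomes an equality and $J$ is constant along the kernel direction; no contradiction to \emph{local} optimality follows. Your proposed fixes do not close this gap either: the first-order conditions $|\bar{p}(\bar\omega_n)|=\alpha\phi'(|\bar{c}_n|)<\alpha$ give no lower bound on $|\bar{c}_n|$, and pushing along $\ker V$ until an atom vanishes is not a local perturbation in the total-variation norm. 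So for globally strictly concave $\phi$ (e.g.\ $\phi_{\log,\gamma}$) your argument and the paper's coincide and are complete; under \ref{cond:phi_1}--\ref{cond:phi_2} alone, both share the same unresolved case.
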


\section{Exact representation with integral ReLU neural networks}\label{sec:exact_rep}
We return to the space $\mathcal{W}(D)$, more specifically to the case
when the activation function is the ReLU function
\fixed{\(\sigma(x;\omega) = \max\{\,0,\,a\cdot x+ b\}\) and $(a,b) = \omega \in \Omega = \Sd$}.
\fixed{Here, it can be easily seen that any $f \in \mathcal{W}(D)$ is Lipschitz continuous
  on $D$.}
First, we consider $D=\R^d$,
\fixed{where \(\nu\) is any probability measure supported on the whole \(\R^d\)}.
We can now characterize the kernel of \(\NN\): It is given
by $M_{-}(\Sd)$, the set of all odd measures $\mu$,
i.e.\ $\de\mu(-a,-b)=-\de\mu(a,b)$, that satisfy the conditions
\[
a_\mu := \int_{\Sd} a\de\mu(a,b)=0\;
\text{ and }\; b_{\mu} := \int_{\Sd} b\de\mu(a,b)=0.
\]
It can be seen that $\NN\mu\equiv 0$, if and only if $\mu\in M_{-}(\Sd)$.
\fixed{
Indeed, by noting that 
\[
  \max\{0;a\cdot x + b\} = \frac{1}{2}(a\cdot x + b - \abs{a\cdot x + b}),
\]
for any measure \(\mu\) it holds
  \begin{align}\label{eq:oddmeasureformula}
  \begin{split}
    \NN\mu 
    &= \frac{1}{2} \left(\int_{\Sd} \left[a\cdot x + b\right] \de \mu+ \int_{\Sd} \abs{a\cdot x + b} \de \mu\right) \\
    &= \frac{1}{2} \left(a_{\mu} \cdot x + b_{\mu} + \int_{\Sd} \abs{a\cdot x + b} \de \mu\right).
    \end{split}
  \end{align}
  Note, that the first two terms only depend on the odd part of \(\mu\), and the last term
  only on the even part.
  Thus, if $\mu\in M_{-}(\Sd)$, then $\NN\mu=0$. The converse follows from
  \cite[Lemma~10]{2019arXiv191001635O} which, by a change of variables, claims that for
  every $f=\NN\mu$
  there exists a unique linear function \(a\cdot x + b\) and a unique even measure
  \(\mu_{f}\) with \(f = a\cdot x + b + \NN \mu_f\). Thus, for $\NN\mu=0$ the unique even measure will be the
  zero measure and \((a,b) = 0\). Taking into account~\eqref{eq:oddmeasureformula},
  we must therefore have that \(\mu\) is an odd measure with $a_\mu=0$ and $b_\mu=0$.
}
Next, we derive an explicit formula for the \fixed{even} measure \(\mu_f\) that represents exactly a
given smooth function \(f\). 
\begin{theorem}\label{thm:rep_smooth}
For a compactly supported function $f\in C_{c}^{d+1}(\R^d)$, we define the coefficient function
\[
    c_f(a,b) = \left\{
    \begin{array}{ll}
        \displaystyle{
        \frac{(-1)^{\nicefrac{(d+1)}{2}}}{2(2\pi)^{d-1}}  \frac{1}{\|a\|^{d+2}} \,
        \frac{\partial^{d+1}}{\partial b^{d+1}} \mathcal{R}[f](a,b)  }
        &\quad \text{if } d \text{ is odd};
        \\
        \displaystyle{
        \frac{(-1)^{\nicefrac{d}{2}}}{2(2\pi)^{d-1}}  \frac{1}{\|a\|^{d+2}} \,
    	\frac{\partial^{d+1}}{\partial b^{d+1}} \mathcal{H}\big[ \mathcal{R}[f](a,\fixed{\cdot}) \big](b) \, }
        &\quad \text{if } d \text{ is even}.
    \end{array}
    \right.
  \]
\fixed{
Here, $\mathcal{R}[f]$ is the Radon transform of $f$  and
$\mathcal{H}[g]$ is the Hilbert transform of a function $g\colon \R \to \R$.%
}
Then $f(x)=[\NN\mu_f](x)$ where $\de\mu_f(a,b) =c_f(a,b) \de(a,b)$ is the measure with
density $c_f(a,b)$ with respect to the \fixed{\(d\)-dimensional Hausdorff measure} \(\de(a,b)\) on $\Sd$.
Moreover,  $f(x)=[\NN\mu](x)$ for $\mu\in M(\fixed{\Sd})$  if and only if 
\( \mu = \mu_f + \mu_{-}\), where $\mu_{-}\in M_{-}(\Sd)$,
and 
\[
\|f\|_{\mathcal{W}(\R^d)} = \norm{\mu_f}_{M(\Omega)} = \norm{c_f}_{L^1(\Omega)},
\]
where $\|f\|_{\mathcal{W}(\R^d)}$ is defined in~\eqref{eq:c_f}.
\end{theorem}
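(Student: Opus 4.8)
The plan is to prove Theorem~\ref{thm:rep_smooth} by reducing the representation of $f$ by ReLU ridge functions to the classical inversion formula for the Radon transform, and then to identify the kernel of $\NN$ with $M_{-}(\Sd)$ to get the uniqueness-up-to-kernel statement.

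First I would recall the Radon transform $\mathcal{R}[f](\theta,t) = \int_{\theta\cdot x = t} f(x)\,dx$ for $\theta\in S^{d-1}$ and $t\in\R$, together with the standard inversion formula: $f$ can be recovered by applying to $\mathcal{R}[f]$ a derivative of order $d$ in $t$ (plus a Hilbert transform when $d$ is even, to account for the nonlocal nature of the even-dimensional inversion) and integrating over $\theta\in S^{d-1}$, with the appropriate normalizing constant $c_d = (-1)^{\lfloor d/2\rfloor}/(2(2\pi)^{d-1})$ or similar. The key computational observation is that $\partial_t^{d+1}$ applied to the ReLU profile: if one writes $f(x) = \int_{S^{d-1}}\int_\R g(\theta,t)\,\sigma'(\dots)\,\cdots$, then since $\sigma(s)=\max\{s,0\}$ satisfies $\sigma''(s) = \delta_0(s)$ in the distributional sense, two derivatives of the ReLU profile in $b$ produce a Dirac mass; this is exactly the mechanism that lets an integral over $\Sd$ of ReLU neurons reproduce the Radon inversion formula. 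Concretely, I would start from the ansatz $[\NN\mu_f](x) = \int_{\Sd} \sigma(a\cdot x + b)\,c_f(a,b)\,d(a,b)$, split the sphere into the upper and lower hemispheres parameterized by $(\theta, b)\mapsto (\theta/\sqrt{1+b^2}, b/\sqrt{1+b^2})$ appropriately (this is where the $\|a\|^{-(d+2)}$ Jacobian-type weight enters), and after a change of variables turn the neuron integral into $\int_{S^{d-1}}\int_\R \sigma(\theta\cdot x - t)\,h(\theta,t)\,dt\,d\theta$; then integrate by parts twice in $t$ to convert $\sigma$ into a delta, leaving $\int_{S^{d-1}} (\partial_t^{d-1}h)(\theta,\theta\cdot x)\,d\theta$, which I match against the Radon inversion formula by choosing $h$ proportional to $\mathcal{R}[f]$ (or its Hilbert transform) differentiated the right number of times. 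Compact support and $C^{d+1}$ regularity of $f$ ensure all these integrals converge absolutely and all the integrations by parts have no boundary terms.

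For the second part, I would first establish that $\NN\mu \equiv 0$ iff $\mu\in M_{-}(\Sd)$. The inclusion $M_{-}(\Sd)\subseteq\ker\NN$ is a direct computation: for an odd measure, pairing $x\mapsto\sigma(a\cdot x+b)$ against $\mu$ and using $\sigma(s)-\sigma(-s) = s = a\cdot x + b$ reduces $[\NN\mu](x)$ to $x\cdot\int a\,d\mu + \int b\,d\mu = 0$ under the two moment conditions. For the reverse inclusion, suppose $[\NN\mu](x)=0$ for all $x$; decompose $\mu = \mu^{+}+\mu^{-}$ into its even and odd parts under the antipodal map. The even part is killed by the homogeneity identity $\sigma(s)+\sigma(-s)=|s|$, so $[\NN\mu^{+}](x) = \frac12\int|a\cdot x+b|\,d\mu(a,b)$ must vanish; since $|a\cdot x+b|$ spans a space whose only relation over $\Sd$ is... this is the point where I would invoke injectivity of a weighted even-Radon/Funk-type transform (or, more elementarily, compute a second distributional derivative to get back a delta and conclude $\mu^{+}=0$). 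The odd part then must satisfy the two linear moment constraints, again by taking first derivatives in $x$. Putting the two parts together, $f = \NN\mu$ iff $\NN(\mu-\mu_f)=0$ iff $\mu - \mu_f\in M_{-}(\Sd)$.

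\textbf{The main obstacle} I expect is the rigorous justification of the repeated distributional integration by parts against the non-smooth ReLU kernel on the curved domain $\Sd$: one must be careful that the hemisphere parameterization, the weight $\|a\|^{-(d+2)}$, and the $\partial_b^{d+1}$ differentiation interact correctly so that the emerging delta mass lands on the hyperplane $\{a\cdot x+b = 0\}$ with exactly the right constant, and that the Hilbert-transform term in even dimensions is handled consistently (its boundedness and the commutation of $\mathcal{H}$ with $\partial_b$). A secondary technical point is verifying $c_f\in L^1(\Sd)$ — equivalently $\mu_f\in M(\Sd)$ — which follows from $f\in C^{d+1}$ compactly supported but requires tracking the decay of $\partial_b^{d+1}\mathcal{R}[f]$ and the integrability of the singular weight near $\|a\|\to 0$; I would note that $b = -\|a\| t$ on $\Sd$ keeps $\|a\|$ bounded below on the support once $f$ is compactly supported, removing that singularity. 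I would also remark that injectivity of $\NN$ modulo $M_{-}(\Sd)$ can alternatively be cited from the ridgelet-transform literature (e.g.~\cite{candes1999harmonic, sonoda2017neural}) if a self-contained proof is deemed too long.
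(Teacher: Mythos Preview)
The paper does not actually supply a proof of this theorem; it simply states that ``The proof of Theorem~\ref{thm:rep_smooth} follows by combining results from \cite{2019arXiv191002743D, 2019arXiv191001635O}.'' Your plan is essentially the content of those references: recover $f$ from the Radon inversion formula, use the distributional identity $\sigma'' = \delta_0$ to convert the backprojection integral into an integral of ReLU ridge functions over $\Sd$, and read off the density $c_f$ after the change of variables to spherical coordinates (which produces the weight $\norm{a}^{-(d+2)}$). So at the level of the representation $f = \NN\mu_f$, your approach is correct and is exactly what the cited works do.

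For the kernel characterization your outline is also the right one, but one step is stated slightly loosely. From $\NN\mu = 0$ you split $\mu = \mu_e + \mu_o$ into even and odd parts under the antipodal map; you then want $\NN\mu_e = 0$ and $\NN\mu_o = 0$ \emph{separately}, and this is not automatic from $\NN\mu_e + \NN\mu_o = 0$. The clean way is to observe that $\NN\mu_o(x) = \tfrac{1}{2}\big(x\cdot\!\int a\,d\mu_o + \int b\,d\mu_o\big)$ is affine, hence has vanishing distributional Hessian; applying the Hessian to $\NN\mu = 0$ therefore isolates the even part and forces (via the $\sigma'' = \delta_0$ argument, which at this point is precisely injectivity of the Radon transform on even measures) $\mu_e = 0$. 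Then $\NN\mu_o = 0$ as an affine function gives the two moment conditions, so $\mu = \mu_o \in M_{-}(\Sd)$. With this clarification your argument goes through; the integrability of $c_f$ on $\Sd$ is indeed handled by the compact support of $f$, which keeps $\norm{a}$ bounded away from zero on $\supp c_f$ as you noted.
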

\begin{proof}
\fixed{The proof of Theorem~\ref{thm:rep_smooth}  follows by combining \cite[Theorem~1]{2019arXiv191002743D} and
\cite[Lemma~10]{2019arXiv191001635O}.}
\end{proof}

Concerning approximation on a bounded domain, for  $f\in C^{d+1}(D)$, we have
$C^{d+1}(D)\subset \mathcal{W}(D)$, and Theorem~\ref{thm:rep_smooth} provides an upper bound
for $\|f\|_{\mathcal{W}(\R^d)}$. Here we say that $f\in C^{d+1}(D)$ if there exists an extension $F\in C^{d+1}_c(\R^d)$ such that $F\rvert_{D} = f$.
\begin{corollary} \label{cor:formula_wRD}
If $f\in C^{d+1}(D)$,
then $f \in \mathcal{W}(D)$, and for any extension $F \in C_c(\R^d)$ with
$F\rvert_{D} = f$, we have
\[ \|f\|_{\mathcal{W}(D)} \leq  \|F\|_{\mathcal{W}(\R^d)} = \norm{\mu_F}_{M(\Sd)}.\]
\end{corollary}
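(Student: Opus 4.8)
The plan is to deduce the statement directly from Corollary~\ref{cor:formula_wRd} by restricting the domain of representation. First, by the definition of $C^{d+1}(D)$ recalled immediately above, there is an extension $F \in C^{d+1}_c(\R^d)$ with $F\rvert_D = f$; this is also the class of extensions for which the right-hand side of the claimed inequality makes sense, since $\norm{\mu_F}_{M(\Sd)}$ and the identity $\|F\|_{\mathcal{W}(\R^d)} = \norm{\mu_F}_{M(\Sd)}$ require $F \in C^{d+1}_c(\R^d)$ (so the ``$C_c(\R^d)$'' in the statement should be read as ``$C^{d+1}_c(\R^d)$'').

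Next, I would apply Corollary~\ref{cor:formula_wRd} to $F$: it gives $F \in \mathcal{W}(\R^d)$ together with a measure $\mu_F \in M(\Sd)$ (the even measure from Theorem~\ref{thm:rep_smooth}) such that $[\NN\mu_F](x) = F(x)$ for every $x \in \R^d$ and $\norm{\mu_F}_{M(\Sd)} = \|F\|_{\mathcal{W}(\R^d)}$. Restricting the pointwise identity to $x \in D$ and using $F\rvert_D = f$ yields $[\NN\mu_F](x) = f(x)$ for every $x \in D$. Hence $\mu_F$ is admissible in the minimization problem~\eqref{eq:c_f} that defines $\|f\|_{\mathcal{W}(D)}$; in particular its feasible set is nonempty, so $f \in \mathcal{W}(D)$, and the constrained minimum is bounded above by the value at $\mu_F$:
\[
\|f\|_{\mathcal{W}(D)} \;\leq\; \norm{\mu_F}_{M(\Sd)} \;=\; \|F\|_{\mathcal{W}(\R^d)}.
\]

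The one point worth stating carefully is that the operator $\NN$ appearing in the definitions of $\mathcal{W}(D)$ and of $\mathcal{W}(\R^d)$ is built from the same kernel $\sigma(\omega,x) = \max\{a\cdot x + b,0\}$, which is defined for every $x \in \R^d$; so passing from a representation valid on all of $\R^d$ to one valid only on the smaller set $D$ leaves the representing measure, and a fortiori its total variation norm, untouched. Beyond this bookkeeping there is no real obstacle: the corollary is just Corollary~\ref{cor:formula_wRd} combined with the elementary fact that a constrained infimum can only decrease when the constraint set is enlarged. If one additionally wanted the infimum in~\eqref{eq:c_f} to be attained, one would invoke the direct-method argument mentioned right after~\eqref{eq:c_f}, but this is not needed for the asserted inequality.
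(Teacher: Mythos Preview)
Your argument is correct and is essentially the paper's own proof: the paper simply observes that any measure representing $F$ on all of $\R^d$ also represents $f$ on $D$, so the feasible set for $\|f\|_{\mathcal{W}(D)}$ contains that for $\|F\|_{\mathcal{W}(\R^d)}$ (the printed inclusion in the paper is accidentally reversed, but the intent is the same). Your remark that the extension must lie in $C^{d+1}_c(\R^d)$ rather than merely $C_c(\R^d)$ is also correct.
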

\begin{proof}
  It easily follows from the observation that
  \[
    \{\mu\in M(\Sd):\; f(x)=[\NN\mu](x),\; \forall x\in \fixed{\R^d}\}
    \subset  \{\mu\in M(\Sd):\; F(x)=[\NN\mu](x),\; \forall x\in \fixed{D}\}.
    \qedhere
  \]
\end{proof}

Additionally, in the setting where $D$ is bounded,
we point out that the \(\mathcal{W}\) norm can be estimated by what is know as
the Barron constant in the literature (going back to~\cite{barron1993universal}
and~\cite{breiman1993hinging}). In
particular, it can be shown that, for any continuous function
$f\colon \R^d \to \R$ with Fourier transform \(\widehat{f}\) and 
\[
C_f = \int_{\R^d} \norm{\omega}^2 |\widehat{f}(\omega)| \de \omega < \infty,
\]
the restriction of $f$ to the bounded set \(D\) is in $\mathcal{W}(D)$ with norm bounded by a constant
factor of \(C_f\). We refer to the introduction of~\cite{klusowski2016risk}
(cf.\ also \cite{ma2019barron}), where we note that the
variation of the function \(f\) introduced there is equivalent to the
definition~\eqref{eq:c_f} given above due to density of \fixed{finite linear combinations of} Dirac-delta functions in the space
of measures \fixed{(in the weak-\(*\) sense)} and the equivalence in \(\R^{d+1}\) of the \(1\)-norm to the Euclidean norm.
This characterizes a different subset of \(\mathcal{W}(D)\) (different from \(C^{d+1}\)).

We conclude this section with the following observation: A smooth function is
\fixed{optimally} represented by a smooth density \(c(a,b)\) as opposed to a sparse discrete
measure~\eqref{eq:measure_discrete}. Thus, we can not expect the solution
of~\eqref{eq:L1_problem} (which can be considered an approximate version
of~\eqref{eq:c_f}) to provide such a discrete measure in the infinite data case.
Indeed, in the numerical experiments of Section~\ref{sec:numerics}, we will observe that
the optimal solution of~\eqref{eq:L1_problem} will tend to approximate the continous
density rather than a maximally discrete one, in contrast to the nonconvex~\eqref{eq:Phi_problem}.



\section{The optimization algorithm}\label{sec:gen_conj_grad}

To numerically solve the problem \eqref{eq:phi_problem}, we
deploy the following algorithm which consists of three phases that
are executed consecutively. The first phase adds new neurons in a greedy fashion, the
second optimizes the weights, and redundant neurons are pruned in phase three as detailed below.
The proposed method can be considered an accelerated version of  the conditional gradient
method \cite{bach2017breaking,2019arXiv190409218P} and is also closely related to the gradient
boosting \cite{friedman2001greedy} \fixed{and the CoSaMP \cite{needell2009cosamp} algorithms}.

\begin{algorithm}[h]
  \caption{Iterative node insertion and optimization}\label{alg:our_algorithm}
  \label{alg:practical}
 \begin{algorithmic}[1]
   \STATE Initial network $\omega^{(0)},c^{(0)}$ of width \(N(0)\)
   \WHILE{$t < T$}
   \STATE Sample \(N_{\text{trial}}\) random nodes \(\omega \in \Omega\).
   \STATE Optimize (in parallel) the function~\eqref{eq:max_dual} starting from the
   initial nodes.
   \STATE Select nodes with \(\abs{p_t(\omega)} > \alpha\) and add
   them to the network (of width \(N(t+1/2)\)).
   \STATE Perform local training based on~\eqref{eq:local_training}.
   \STATE Remove nodes with outer weight zero (resulting in width \(N(t+1)\)).
   \STATE $t=t+1$.
   \ENDWHILE
 \end{algorithmic} 
\end{algorithm}

\paragraph{Initialization:}
Let $\omega^{(t)} = [\omega^{(t)}_1,\ldots,\omega^{(t)}_{N(t)}]$,
$c^{(t)}=[c_1^{(t)},\ldots,c^{(t)}_{N(t)}]$ be the lists of network inner and outer weights
in the $t$-th iteration and $N(t)$ be the corresponding number of neurons.
Network initialization is arbitrary:
one can start from any network, including the empty network, then add
and extract neurons to derive an optimal network.

\paragraph{Phase 1} 
To determine new points to insert, in the greedy insertion step, we compute the
nodes \(\omega\in\Omega\) for which the correlation of $\sigma(\cdot;\omega)$ with the residual
\(g_t = \NN_{\omega^{(t)},c^{(t)}} - y\) is largest. Thus, we maximize 
\begin{equation}
  \label{eq:max_dual}
\Omega \ni \omega \mapsto
  \abs{p_t(\omega)}
  =
  \abs*{\frac{1}{K}\sum_{k=1}^K \sigma(x_k; \omega) g_t(x_k)}
\end{equation}
where we have assumed that \(K\) is finite.
Note that $p_t(\omega)$ is exactly the dual variable as defined in
Section~\ref{subsec:local_sol}.
Finding a global maximum of~\eqref{eq:max_dual} (in a high dimensional space) is a challenging problem, and its
reliable determination up to a guaranteed
tolerance for the specific problem here is subject of ongoing research; cf.\ \cite{bach2017breaking}.
As an ersatz, we use the following heuristic which is
commonly employed in practice: we test all local maxima of~\eqref{eq:max_dual} that are
found by a gradient maximization, initialized at \(N_{\text{trial}}\) random
points on $\Omega$.  This corresponds to solving \(N_{\text{trial}}\)  simple
\fixed{constrained} optimization problems (in parallel); cf.,
e.g.,~\cite{boyd2017alternating}.
\fixed{Moreover, the constraint \((a_n,b_n) = \omega_n \in \Sd\) can be removed by parametrization of the sphere,
using, e.g., a stereographic projection; see Appendix~\ref{sec:sup_gen_conj_grad}.}
Of these points, we insert all that violate the constraint \(|p_t(\omega)| \leq \alpha\) (after
removing possible duplicates).


\paragraph{Phase 2}
Let $N(t+1/2)\leq N(t)+N_{\text{trial}}$ denote the number of neurons in the resulting
network. Next, we compute an approximate local solution to
the objective function of~\eqref{eq:l1_problem}, for \(N = N(t+1/2)\), given by
\begin{equation}
  \label{eq:local_training}
  (\Omega\times\R)^{N(t+1/2)} \ni (\omega,c) \mapsto
  l\left(\NN_{\omega, c};y\right) + \alpha\sum_{n=1}^{N(t+1/2)}\phi(\abs{c_n}).
\end{equation}
\fixed{Here, it is} sufficient to optimize with respect to the outer weights only
\fixed{according to Theorem~\ref{thm:nec_suff_local_sol}}.
The resulting nonsmooth optimization problem can be solved by
standard training methods based on (proximal) gradient descent, using the old values as initialization for the weights from the
previous iteration.
\fixed{In practice, we employ more efficient second order semismooth Newton methods;
  see Appendix~\ref{sec:sup_gen_conj_grad}.}
Alternatively, we can optimize in terms of all weights, \fixed{using an appropriate
  nonsmooth optimization algorithm.}

\paragraph{Phase 3}
Finally, we note that the objective~\eqref{eq:local_training} contains a sparsity
promoting term for the outer weights, and we can expect several of the outer weights to be
zero after having solved the problem up to a specified tolerance. Thus, we can eliminate
these outer weights (together with their corresponding inner weights) from the network
without changing the underlying function. This results in a new network of final width \(N(t+1)\).

The resulting procedure is outlined in Algorithm~\ref{alg:practical}.
In Appendix~\ref{sec:sup_gen_conj_grad}, we provide a more
detailed explanation of the implementation of single steps of the method.
\fixed{The code for the numerical experiments is provided at~\cite{NonConvexNNcode}}.

\section{Numerical examples}
\label{sec:numerics}
In this section, we supply numerical  examples in one and two
dimensions. We demonstrate the difference between \(\ell_1\) and
\(\phi\) regularization and the effect of the parameter $\gamma$
on the optimal number of neurons. Here, we take \(\phi\) as
\begin{equation}\label{eq:func_phi_gamma}
\phi_\gamma(z) = \frac{1}{2} \left( z + \phi_{\log,2\gamma}(z) \right),
\end{equation}
which fulfills~\ref{cond:phi_1} and~\ref{cond:phi_2}
with $\widehat{\gamma} = \gamma/\fixed{4}$ and \(\widehat{z} = 1/(\fixed{2}\gamma)\), where
$\phi_{\log,2\gamma}(z)$ is defined in \eqref{form:phi_log}.

\subsection{One dimensional  example}
\label{sec:numerics_1}
In Figure \ref{fig:1d_comparision}, we consider the function
\begin{equation*}
f(x) = \exp\left(-x^2/2\right)\abs*{\sin\left(7\sqrt{1+x^2}\right)}
\end{equation*}
on the interval $D = [-1,1]$. The data set consists of $1000$ uniformly arranged points
\(x_k\) and \(y_k = f(x_k)\) (without noise).
We fix $\alpha = \num{e-05}$ and the number of iterations of Algorithm~\ref{alg:practical} is $\fixed{T}=15$.
During each iteration, up to $N_{\text{trial}} = 50$
nodes can be added to the network.
In Figure~\ref{fig:1d_comparision}, we plot the results for~\eqref{eq:l1_problem}
and~\eqref{eq:phi_problem} with \(\gamma = 1\). 
\begin{figure}[h]
\centering
	\begin{subfigure}{.49\linewidth}
		\centering
                \includegraphics[width=0.9\textwidth]{paper_test_1/l1_forward.tikz}
                \includegraphics[width=0.9\textwidth]{paper_test_1/l1_adjoint.tikz}
	  \caption{Global \(\ell_1\)-solution ($\gamma=0$).}
        \end{subfigure}
	\begin{subfigure}{.49\linewidth}
		\centering
                \includegraphics[width=0.9\textwidth]{paper_test_1/phi_forward.tikz}
                \includegraphics[width=0.9\textwidth]{paper_test_1/phi_adjoint.tikz}
	\caption{Local \(\phi_\gamma\)-regularized solution for $\gamma =1$.}
	\end{subfigure}
	\caption{Comparison between solutions of~\eqref{eq:l1_problem} and
          \eqref{eq:phi_problem}, for \(\phi_\gamma(z)\) in~\eqref{eq:func_phi_gamma} in one dimension.
          \textit{Top:} True function samples (dashed blue) and neural network
          approximation (black, orange crosses at the knot points \(x_n= -b_n/a_n\)).
          \textit{Middle:} Optimal sparse measure in angular coordinates.
          \textit{Bottom:} Optimal dual variable in angular coordinates.
        }\label{fig:1d_comparision}
\end{figure}
We observe that the \(\ell_1\) optimal network, while providing a sparse approximation of
the function in some areas, contains dense ``clusters'' of inner weights on the sphere,
corresponding to a large cluster of knot points in the corresponding linear spline. These
are located in areas of the sphere where the dual variable assumes the extreme values
\(\pm\alpha\). For the \(\phi\) minimal network the knot points
appear more sparsely spaced and the dual variable is reduced below the extreme values.
To further investigate the influence of the hyperparameter $\gamma$, we also solve~\eqref{eq:phi_problem} for
\(\gamma \in \{\,\num{e-3},\,\num{e-2},\,\num{e-1}\,\}\), and give their optimal
number of neurons and approximation quality in Table~\ref{table:gamma_com_1d}. We observe that
increasing \(\gamma\) leads to an increased reduction in the number of neurons while keeping
the approximation quality essentially constant. We note that \(\gamma=0\) is the
\(\ell_1\) solution.
\begin{table}[h]
\centering
\begin{subfigure}{.49\linewidth}\centering
\small
\begin{tabular}{lrr}
\toprule
$\gamma$       & $N$  & $\norm{\NN_{\omega,c} - f}$   \\ \midrule
\num{0}    & \num{342} & \num{0.0045} \\
\num{e-03} & \num{71}  & \num{0.0044} \\
\num{e-02} & \num{35}  & \num{0.0043} \\
\num{e-01} & \num{20}  & \num{0.0039} \\
\num{1}    & \num{15}  & \num{0.0038} \\
\bottomrule
\end{tabular}
\caption{Example from Section~\ref{sec:numerics_1}.
}\label{table:gamma_com_1d}
\end{subfigure}
\begin{subfigure}{.49\linewidth}\centering
\small
\begin{tabular}{lrr}
\toprule
$\gamma$        & $N$     & $\norm{\NN_{\omega,c} - f}$ \\ 
\midrule
\num{0}        & {105} & \num{1.9e-04} \\
\num{e-02}     & {53}  & \num{1.9e-04} \\
\num{e-01}     & {29}  & \num{3.3e-04} \\
\num{1}        & {16}  & \num{8.9e-04} \\
\num{10}       & {14}  & \num{1.0e-3} \\
\bottomrule
\end{tabular}
\caption{Example from Section~\ref{sec:numerics_2}.
}\label{table:gamma_com_2d}
\end{subfigure}
\caption{Optimal $N$ and fidelity for the solutions of \eqref{eq:phi_problem}
  with~\eqref{eq:func_phi_gamma} and different $\gamma$.
}\label{table:gamma_com}
\end{table}

\subsection{Two-dimensional  example}
\label{sec:numerics_2}
For the two dimensional experiment, we uniformly sample
\begin{equation}
\label{eq:example_2d}
f(x_1,x_2) = \exp\left(-\frac{x_1^2+x_2^2}{2}\right)\,\cos(10\,x_1x_2).
\end{equation}
on a $31\times 31$ grid in $D=[-1,1]^2$ and solve \eqref{eq:l1_problem} and
\eqref{eq:phi_problem} with \(\phi_\gamma\) from \eqref{eq:func_phi_gamma} and $\alpha=\num{e-5}$
and $\gamma=5$. For both methods, the algorithm is iterated \fixed{\(T=10\)} times, with
up to \fixed{\(N_{\text{trial}} = 50\)} nodes added in each
iteration.
We give the results in Figure~\ref{fig:2d_plot}. As before, we observe a reduction in the
number of nodes for the nonconvex regularization with essentially the same approximation
quality. In contrast to the convex regularized solution, the nonconvex regularization is not
affected by the clustering of nodes in a small area of the sphere.
\begin{figure}[h!]
	\begin{subfigure}[b]{.3\linewidth}
	\centering
        \includegraphics[width=0.99\textwidth]{paper_test_2/l1_measure.tikz}
        \caption{Optimal measure \eqref{eq:l1_problem}:\\
          \(\gamma=0\), \(N = 197\).}
	\end{subfigure}
	\begin{subfigure}[b]{.3\linewidth}
          \centering
          \includegraphics[width=0.99\textwidth]{paper_test_2/phi_measure.tikz}
          \caption{Optimal measure \eqref{eq:phi_problem}:\\
            $\gamma=5$, \(N = 71\).}
	\end{subfigure}
	\begin{subfigure}[b]{.35\linewidth}
          \centering
         \includegraphics[width=0.99\textwidth]{paper_test_2/l1_function.tikz}
         \caption{
           Optimal network and data (black circles) for~\eqref{eq:l1_problem}:
           \(\norm{\NN_{\omega,c} - f} = \num{9.4e-3}\). \\
           The plot for~\eqref{eq:phi_problem} with $\gamma=5$ is indistinguishable:
           \(\norm{\NN_{\omega,c} - f} = \num{8.7e-3}\).}
        \end{subfigure}
	\caption{Comparison of  \eqref{eq:l1_problem} and
          \eqref{eq:phi_problem} with~\eqref{eq:func_phi_gamma} for~\eqref{eq:example_2d}.
	\textit{Left:} Location of optimal inner weights (color indicates sign, size
        outer weight magnitude) under stereographic projection of~\(\mathbb{S}^2\) from the pole \((a,b)=(0,-1)\).
	\textit{Right:} Data points and optimal neural network.}
      \label{fig:2d_plot}
\end{figure}

To \fixed{illustrate the effect of increased \(\gamma\) on removal of ``clusters''} in
more detail we \fixed{consider a function with analytically known representing measure \(\mu_f\)
given by}
\begin{equation}
\label{eq:example_abs}
f(x) = \norm{x - \hat{x}} = \sqrt{(x_1-\hat{x}_1)^2+(x_2-\hat{x}_2)^2},
\quad\text{where } \hat{x} = (0.1,0.1),
\end{equation}
which is uniformly sampled on a $21\times 21$ grid in $D = [-1,1]^2$.
The results are provided in Table~\ref{table:gamma_com_2d}.
Concerning the number of neurons, we observe a steady reduction
\fixed{from \(N=105\) to \(N=14\) with increasing \(\gamma\)}. In this example the approximation quality is slightly
reduced; however, it is still consistently small following the estimate
\(\norm{\NN_{\omega,c} - f}^2 \leq \alpha\norm{f}_{\mathcal{W}(D)}\) in Theorem~\ref{thm:fid_est}.

For this example, we can give the integral representation exactly (cf.\ Theorem~\ref{thm:rep_smooth}).
One can show that \(f = \NN\mu_f\) for a measure \(\mu_f\) supported on
the great circle \(S_{\hat{x}} = \{\,(a,b) \in \mathbb{S}^2 \;|\; a \cdot \hat{x} = b\,\}\):
\begin{equation*}
f(x)
= \int_{S_{\hat{x}}} \max\{\,a \cdot x + b,\, 0\,\} \frac{1}{2\norm{a}} \de S(a,b)
= \frac{1}{2}\left[ \int_{S_0} \max\{\,a \cdot y,\,0\,\} \de S(a,b) \right]_{y = x - \hat{x}}.
\end{equation*}
Here, \(\de S\) is the one-dimensional line integral on
\(S_{\hat{x}}\) (resp.\ \(S_0\)).
Therefore, \(\mu_f\) is given by \(\de\mu_f = 1/(2\norm{a}) \de
S(a,b)\rvert_{S_{\hat{x}}}\).
In Figure~\ref{fig:2d_duals} we investigate more closely the properties of the solutions.
\begin{figure}[h!]
	\begin{subfigure}{.3\linewidth}\centering
          \includegraphics[width=0.99\textwidth]{paper_test_3/l1_measure.tikz}
          \caption{$\gamma = \num{0}$, $N = 105$.}
	\end{subfigure}
	\begin{subfigure}{.3\linewidth}\centering
          \includegraphics[width=0.99\textwidth]{paper_test_3/phi_measure.tikz}
          \caption{$\gamma = \num{10}$, $N = 14$.}
	\end{subfigure}
	\begin{subfigure}{.38\linewidth}\centering
          \includegraphics[width=0.9\textwidth]{paper_test_3/phi_adjoint.tikz}
          \caption{Dual variable for $\gamma = \num{10}$.}
	\end{subfigure}
	\caption{
          Comparison of  \eqref{eq:l1_problem} and
          \eqref{eq:phi_problem} with~\eqref{eq:func_phi_gamma} for~\eqref{eq:example_abs}.
          \textit{Left:} Location of inner weights (dot color indicates sign, dot size
          outer weight magnitude) under stereographic projection of~\(\mathbb{S}^2\) from the pole \((a,b)=(0,-1)\).
          \textit{Right:} Dual variable and nodes under stereographic projection.}\label{fig:2d_duals}
\end{figure}
We observe that the nodes of the solution of~\eqref{eq:l1_problem} densely
cluster on the great circle \(S_{\hat{x}}\), up to gaps arising due to
the symmetry of the nodes with respect to \((a,b) \sim (-a,-b)\)
(cf.\ Theorem~\ref{thm:rep_smooth}).
This is remarkable since the support of the regularized solution -- representing a
compromise between a fit of the data and a small regularization term -- is essentially
the same as the exact measure \(\mu_f\) that achieves the perfect fit.
In contrast, for large \(\gamma\),
the points are sparsely placed on the circle and regularly spaced
up to the symmetry \((a,b) \sim (-a,-b)\). By reducing the dual variable
below the bound \(\abs{\bar{p}} \leq \alpha\) in the existing nodes, no additional
point with a small weight can be inserted without increasing the regularized objective.

\subsection{Three-dimensional example}
\fixed{
Here, we consider for \(d=3\) the function
\begin{equation}
  \label{eq:f_3d}
  f(x_1,x_2,x_3) = (x_2^2 + x_3^2)/(x_1 + 1.05) + 2(x_1 + 1.05)^4
\end{equation}
on a random set of $1000$ uniformly distributed points in $D=[-1,1]^3$ and solve~\eqref{eq:l1_problem} and
\eqref{eq:phi_problem} with \(\phi_\gamma\) from \eqref{eq:func_phi_gamma}
and $\gamma=5$. To evaluate the influence of \(\alpha\), we consider \(\alpha =
10^{-2} \cdot (1/4)^j\) for \(j=0,1,\ldots,3\).
For both functionals, the algorithm is initialized with the previous solution (for larger
\(\alpha\), if available), iterated \fixed{\(T=10\)} times with
up to \fixed{\(N_{\text{trial}} = 50\)} nodes added in each iteration.
In Figure~\ref{fig:3d} we investigate more closely the properties of the solutions for the
smallest \(\alpha\), which again illustrates the improved sparsity of the nonconvex approach.
Moreover, Table~\ref{tab:3d} confirms that this reduction holds independently of
\(\alpha\), that the fidelity is comparable between the convex and nonconvex solution, and
that \(\alpha\) determines the quality of the fit. We note that the error is reduced by a
factor \(\sim 1/2\) when \(\alpha\) is reduced by a factor \(1/4\), which is in accordance with Theorem~\ref{thm:fid_est}. 
}
\begin{figure}[h!]
  \centering
  \begin{subfigure}{.45\linewidth}\centering
    \includegraphics[width=0.99\textwidth]{paper_test_4/l1_forward.tikz}
    \caption{$\alpha=\num{1.6e-4}$, $\gamma = \num{0}$.}
  \end{subfigure}
  \begin{subfigure}{.45\linewidth}\centering
    \includegraphics[width=0.99\textwidth]{paper_test_4/phi_forward.tikz}
    \caption{$\alpha=\num{1.6e-4}$, $\gamma = \num{5}$.}
  \end{subfigure}
	\caption{
          Comparison of solutions of~\eqref{eq:l1_problem} and
          \eqref{eq:phi_problem} with \(\phi_\gamma\) from~\eqref{eq:func_phi_gamma} for
          \(f\) from~\eqref{eq:f_3d}:
          Location of inner weights (dot color indicates sign, dot size
          outer weight magnitude) under stereographic projection of~\(\mathbb{S}^3\) from
          the pole \((a,b)=(0,0,-1)\).
        }\label{fig:3d}
\end{figure}
\begin{table}
  \centering
\begin{tabular}{rrlrl}
\toprule
 & \multicolumn{2}{c}{($\gamma=0$)} & \multicolumn{2}{c}{($\gamma=5$)} \\
$\alpha$ & $N$     & $\norm{\NN - f}$ & $N$     & $\norm{\NN - f}$ \\
\midrule
\num{1e-2}       & {31}  & \num{0.84} & {9}   & \num{0.60} \\
\num{2.5e-3}     & {60}  & \num{0.51} & {15}  & \num{0.35} \\
\num{6.3e-4}     & {83}  & \num{0.31} & {22}  & \num{0.19} \\
\num{1.6e-4}     & {127} & \num{0.15} & {38}  & \num{0.098} \\
\bottomrule
\end{tabular}
\vspace{.2em}
\caption{Optimal $N$ and fidelity for the solutions of~\eqref{eq:l1_problem} and~\eqref{eq:phi_problem}
  with~\eqref{eq:func_phi_gamma} for $\gamma=5$ and different $\alpha$ using $f$
  from~\eqref{eq:f_3d}.}
\label{tab:3d}
\end{table}

\section{Conclusions}
In this work, we introduce a nonconvex regularization method for finding sparse neural
networks. Even though it is challenging to solve the problem globally, we provide
theoretical confirmation of  local minimizers satisfying the desirable sparsity and
approximation properties.   We numerically solve the problem using an adaptive
method that gradually adds and removes nodes from the network. 

We assume that the target function outputs scalar values. This
restriction can be easily lifted by considering the outer weights $c_n$ to be
vectors, and by replacing the penalty term $\phi(|c_n|)$ with $\phi(\|c_n\|)$ in the
problem formulation. Then, the generalized problem
with measures will be modified by switching from signed measures to vector-valued
measures. We can expect all the results of this paper to hold true with small
modifications. 

Throughout the paper, we largely limited ourselves to the ReLU activation function,
although  the optimization problem is not specific to this choice and many activation
functions that fulfill the Lipschitz continuity requirement  could be employed
instead. However, the ReLU allowed us to restrict the inner weights from $\R^{d+1}$ to $\Sd$.
For other activation functions that are not positively homogeneous (in particular smoother
ones), the restriction
from the whole space to a bounded set can impose restrictions on the
approximation properties of the resulting architecture, and it may be necessary to
consider an unbounded dictionary in \(\R^{d+1}\).

A more thorough investigation of the numerical optimization algorithm,
algorithmic choices, and implementation details may warrant more attention.
Additionally, we only consider nonconvex regularization and adaptive training for  shallow
neural networks.
For the case of deep neural networks, a direct reformulation in terms of appropriate
integral representations remains a challenging problem.




\bibliography{refs}

\begin{thebibliography}{10}

\bibitem{2020arXiv200107523A}
{\sc B.~{Adcock} and N.~{Dexter}}, {\em {The gap between theory and practice in
  function approximation with deep neural networks}}, SIAM Journal on
  Mathematics of Data Science, 3 (2021), pp.~624--655.

\bibitem{bach2017breaking}
{\sc F.~Bach}, {\em Breaking the curse of dimensionality with convex neural
  networks}, The Journal of Machine Learning Research, 18 (2017), pp.~629--681.

\bibitem{barron1993universal}
{\sc A.~R. Barron}, {\em Universal approximation bounds for superpositions of a
  sigmoidal function}, IEEE Transactions on Information theory, 39 (1993),
  pp.~930--945.

\bibitem{bengio2006convex}
{\sc Y.~Bengio, N.~L. Roux, P.~Vincent, O.~Delalleau, and P.~Marcotte}, {\em
  Convex neural networks}, in Advances in neural information processing
  systems, 2006, pp.~123--130.

\bibitem{bouchitte1990new}
{\sc G.~Bouchitt{\'e} and G.~Buttazzo}, {\em New lower semicontinuity results
  for nonconvex functionals defined on measures}, Nonlinear Analysis: Theory,
  Methods \& Applications, 15 (1990), pp.~679--692.

\bibitem{bouchitte1992integral}
\leavevmode\vrule height 2pt depth -1.6pt width 23pt, {\em Integral
  representation of nonconvex functionals defined on measures}, Annales de
  l'Institut Henri Poincare (C) Non Linear Analysis, 9 (1992), pp.~101--117.

\bibitem{bouchitte1993relax}
\leavevmode\vrule height 2pt depth -1.6pt width 23pt, {\em Relaxation for a
  class of nonconvex functionals defined on measures}, Annales de l'Institut
  Henri Poincare (C) Non Linear Analysis, 10 (1993), pp.~345--361.

\bibitem{boyd2017alternating}
{\sc N.~Boyd, G.~Schiebinger, and B.~Recht}, {\em The alternating descent
  conditional gradient method for sparse inverse problems}, SIAM J. Optim., 27
  (2017), pp.~616--639.

\bibitem{bredies2013inverse}
{\sc K.~Bredies and H.~K. Pikkarainen}, {\em Inverse problems in spaces of
  measures}, ESAIM Control Optim. Calc. Var., 19 (2013), pp.~190--218.

\bibitem{breiman1993hinging}
{\sc L.~Breiman}, {\em Hinging hyperplanes for regression, classification, and
  function approximation}, IEEE Transactions on Information Theory, 39 (1993),
  pp.~999--1013.

\bibitem{candes1999harmonic}
{\sc E.~J. Cand{\`e}s}, {\em Harmonic analysis of neural networks}, Applied and
  Computational Harmonic Analysis, 6 (1999), pp.~197--218.

\bibitem{chen2019approximation}
{\sc Y.~Chen, Y.~Ye, and M.~Wang}, {\em Approximation hardness for a class of
  sparse optimization problems.}, Journal of Machine Learning Research, 20
  (2019), pp.~1--27.

\bibitem{cortes2017adanet}
{\sc C.~Cortes, X.~Gonzalvo, V.~Kuznetsov, M.~Mohri, and S.~Yang}, {\em Adanet:
  Adaptive structural learning of artificial neural networks}, in Proceedings
  of the 34th International Conference on Machine Learning-Volume 70, 2017,
  pp.~874--883.

\bibitem{debarre2020sparsest}
{\sc T.~Debarre, Q.~Denoyelle, M.~Unser, and J.~Fageot}, {\em Sparsest
  piecewise-linear regression of one-dimensional data}, Journal of
  Computational and Applied Mathematics, 406 (2022), p.~114044.

\bibitem{2019arXiv191002743D}
{\sc A.~{Dereventsov}, A.~{Petrosyan}, and C.~{Webster}}, {\em {Neural network
  integral representations with the {ReLU} activation function}}, Proceedings
  of Machine Learning Research, 107 (2020), pp.~128--143.

\bibitem{ma2019barron}
{\sc W.~E, C.~Ma, and L.~Wu}, {\em The {Barron} space and the flow-induced
  function spaces for neural network models}, Constructive Approximation,
  (2021), pp.~369--406.

\bibitem{2019arXiv190610732E}
{\sc U.~{Evci}, F.~{Pedregosa}, A.~{Gomez}, and E.~{Elsen}}, {\em {The
  Difficulty of Training Sparse Neural Networks}}, arXiv e-prints (ICML 2019
  contribution),  (2020).
\newblock arXiv:1906.10732.

\bibitem{fan2001variable}
{\sc J.~Fan and R.~Li}, {\em Variable selection via nonconcave penalized
  likelihood and its oracle properties}, Journal of the American statistical
  Association, 96 (2001), pp.~1348--1360.

\bibitem{flinth2019linear}
{\sc A.~Flinth, F.~de~Gournay, and P.~Weiss}, {\em On the linear convergence
  rates of exchange and continuous methods for total variation minimization},
  Mathematical Programming, 190 (2021), pp.~221--257.

\bibitem{friedman2001greedy}
{\sc J.~H. Friedman}, {\em Greedy function approximation: a gradient boosting
  machine}, Annals of Statistics,  (2001), pp.~1189--1232.

\bibitem{2019arXiv190209574G}
{\sc T.~{Gale}, E.~{Elsen}, and S.~{Hooker}}, {\em {The State of Sparsity in
  Deep Neural Networks}}, arXiv e-prints,  (2019).
\newblock arXiv:1902.09574.

\bibitem{HVW:2015}
{\sc M.~Hintermüller, T.~Valkonen, and T.~Wu}, {\em Limiting aspects of
  nonconvex \(\mathrm{TV}^{\phi}\) models}, SIAM Journal on Imaging Sciences, 8
  (2015), pp.~2581--2621.

\bibitem{kainen2010integral}
{\sc P.~C. Kainen, V.~Kurkova, and A.~Vogt}, {\em Integral combinations of
  heavisides}, Mathematische Nachrichten, 283 (2010), pp.~854--878.

\bibitem{klusowski2016risk}
{\sc J.~M. Klusowski and A.~R. Barron}, {\em Risk bounds for high-dimensional
  ridge function combinations including neural networks}, arXiv e-prints,
  (2016).
\newblock arXiv:1607.01434.

\bibitem{klusowski2018approximation}
\leavevmode\vrule height 2pt depth -1.6pt width 23pt, {\em Approximation by
  combinations of {ReLU} and squared {ReLU} ridge functions with $\ell^1$ and
  $\ell^0$ controls}, IEEE Transactions on Information Theory, 64 (2018),
  pp.~7649--7656.

\bibitem{kuurkova1997estimates}
{\sc V.~Kurkova, P.~C. Kainen, and V.~Kreinovich}, {\em Estimates of the number
  of hidden units and variation with respect to half-spaces}, Neural Networks,
  10 (1997), pp.~1061--1068.

\bibitem{loh2015regularized}
{\sc P.-L. Loh and M.~J. Wainwright}, {\em Regularized m-estimators with
  nonconvexity: Statistical and algorithmic theory for local optima}, The
  Journal of Machine Learning Research, 16 (2015), pp.~559--616.

\bibitem{2019arXiv190101021M}
{\sc R.~{Ma}, J.~{Miao}, L.~{Niu}, and P.~{Zhang}}, {\em {Transformed $\ell_1$
  Regularization for Learning Sparse Deep Neural Networks}}, Neural Networks,
  119 (2019), pp.~286--298.

\bibitem{mazumder2011sparsenet}
{\sc R.~Mazumder, J.~H. Friedman, and T.~Hastie}, {\em Sparsenet: Coordinate
  descent with nonconvex penalties}, Journal of the American Statistical
  Association, 106 (2011), pp.~1125--1138.

\bibitem{murata1996integral}
{\sc N.~Murata}, {\em An integral representation of functions using
  three-layered networks and their approximation bounds}, Neural Networks, 9
  (1996), pp.~947--956.

\bibitem{needell2009cosamp}
{\sc D.~Needell and J.~A. Tropp}, {\em Cosamp: Iterative signal recovery from
  incomplete and inaccurate samples}, Applied and computational harmonic
  analysis, 26 (2009), pp.~301--321.

\bibitem{2014arXiv1412.6614N}
{\sc B.~{Neyshabur}, R.~{Tomioka}, and N.~{Srebro}}, {\em {In Search of the
  Real Inductive Bias: On the Role of Implicit Regularization in Deep
  Learning}}, arXiv e-prints (ICLR 2015 contribution),  (2015).
\newblock arXiv:1412.6614.

\bibitem{2019arXiv191001635O}
{\sc G.~{Ongie}, R.~{Willett}, D.~{Soudry}, and N.~{Srebro}}, {\em {A Function
  Space View of Bounded Norm Infinite Width {ReLU} Nets: The Multivariate
  Case}}, arXiv e-prints (ICLR 2020 contribution),  (2020).
\newblock arXiv:1910.01635.

\bibitem{parhi2019minimum}
{\sc R.~Parhi and R.~D. Nowak}, {\em The role of neural network activation
  functions}, IEEE Signal Processing Letters, 27 (2020), pp.~1779--1783.

\bibitem{parhi2020neural}
\leavevmode\vrule height 2pt depth -1.6pt width 23pt, {\em Banach space
  representer theorems for neural networks and ridge splines}, Journal of
  Machine Learning Research, 22 (2021), pp.~1--40.

\bibitem{parikh2014proximal}
{\sc N.~Parikh and S.~Boyd}, {\em Proximal algorithms}, Foundations and
  Trends{\textregistered} in Optimization, 1 (2014), pp.~127--239.

\bibitem{pieper2015finite}
{\sc K.~Pieper}, {\em Finite element discretization and efficient numerical
  solution of elliptic and parabolic sparse control problems}, {PhD}
  {D}issertation, Technische Universit{\"a}t M{\"u}nchen, 2015.
\newblock
  \url{http://nbn-resolving.de/urn/resolver.pl?urn:nbn:de:bvb:91-diss-20150420-1241413-1-4
  }.

\bibitem{NonConvexNNcode}
{\sc K.~Pieper and A.~Petrosyan}, {\em {Code for ``Nonconvex regularization for
  sparse neural networks''}}.
\newblock \url{https://github.com/konstpieper/NonConvexSparseNN}.

\bibitem{2019arXiv190409218P}
{\sc K.~Pieper and D.~Walter}, {\em Linear convergence of accelerated
  conditional gradient algorithms in spaces of measures}, ESAIM: COCV, 27
  (2021), p.~38.

\bibitem{RossetSwirszczSrebroZhu:2007}
{\sc S.~Rosset, G.~Swirszcz, N.~Srebro, and J.~Zhu}, {\em $\ell_1$
  regularization in infinite dimensional feature spaces}, in Learning Theory,
  N.~H. Bshouty and C.~Gentile, eds., Berlin, Heidelberg, 2007, Springer Berlin
  Heidelberg, pp.~544--558.

\bibitem{savarese:2019}
{\sc P.~Savarese, I.~Evron, D.~Soudry, and N.~Srebro}, {\em How do infinite
  width bounded norm networks look in function space?}, Proceedings of Machine
  Learning Research, 99 (2019), pp.~1--24.

\bibitem{sonoda2017neural}
{\sc S.~Sonoda and N.~Murata}, {\em Neural network with unbounded activation
  functions is universal approximator}, Applied and Computational Harmonic
  Analysis, 43 (2017), pp.~233--268.

\bibitem{Ulbrich:2011}
{\sc M.~Ulbrich}, {\em Semismooth {Newton} Methods for Variational Inequalities
  and Constrained Optimization Problems in Function Spaces}, MOS-SIAM Series on
  Optimization, SIAM, 2011.

\bibitem{wang2014optimal}
{\sc Z.~Wang, H.~Liu, and T.~Zhang}, {\em Optimal computational and statistical
  rates of convergence for sparse nonconvex learning problems}, Annals of
  Statistics, 42 (2014), p.~2164.

\bibitem{yang2019deephoyer}
{\sc H.~Yang, W.~Wen, and H.~Li}, {\em Deephoyer: Learning sparser neural
  network with differentiable scale-invariant sparsity measures}, arXiv e-print
  (ICLR 2020 contribution),  (2020).
\newblock arXiv:1908.09979.

\bibitem{zhang2010nearly}
{\sc C.-H. Zhang}, {\em Nearly unbiased variable selection under minimax
  concave penalty}, The Annals of Statistics, 38 (2010), pp.~894--942.

\bibitem{zhang2010analysis}
{\sc T.~Zhang}, {\em Analysis of multi-stage convex relaxation for sparse
  regularization}, Journal of Machine Learning Research, 11 (2010),
  pp.~1081--1107.

\end{thebibliography}
\bibliographystyle{siam}


\appendix

\section{Properties of \(\Phi\) and the problem~\eqref{eq:Phi_problem}}\label{sec:phi_reg}

\fixed{
First, we discuss some simple consequences of the requirements~\ref{cond:phi_1} and~\ref{cond:phi_2}.
\begin{proposition}
Let \(\phi\colon \R_+ \to \R_+\) be twice differentiable almost everywhere with
\(\phi(0) = 0\), \(\phi^\prime(0) = 1\) and
\[
  -\gamma \leq \phi^{\prime\prime}(z) \leq 0 \quad\text{for almost all } z \geq 0. 
\]
Then \(\phi\) fulfills~\ref{cond:phi_1} if and only if \(\phi(z) \to \infty\) for \(z\to \infty\).
Moreover, if for some \(0 < \widehat{\gamma} \leq \gamma\) and \(\widehat{z}>0\),
\[
  \phi^{\prime\prime}(z) \leq -\widehat{\gamma} \quad\text{for almost all } z
  \in (0, \widehat{z}),
\]
then \(\phi\) fulfills~\ref{cond:phi_2}.
Moreover, \ref{cond:phi_1} resp.~\ref{cond:phi_2} imply that
\begin{align*}
  z - (\gamma/2) z^2 \leq \phi(z) &\leq z &&\text{for } z \geq 0, \\
  \phi(z) &\leq z - (\widehat{\gamma}/2) z^2 &&\text{for } z \in [0, \widehat{z}].
\end{align*}
\end{proposition}
\begin{proof}
By integrating the first condition from \(0\leq z_1 \leq z_2\) we obtain the desired condition
\[
  -\gamma(z_2-z_1) \leq \phi^\prime(z_2) - \phi^\prime(z_1) \leq 0
  \quad\text{for all } 0 \leq z_1 \leq z_2,
\]
which implies that \(\phi\) is concave and \(\gamma\)-convex, which are all conditions
from~\ref{cond:phi_1} except for the unboundedness at infinity.
Plugging in \(z_1 = 0\), using \(\phi'(0) = 1\) and \(\phi(0) = 0\) and integrating over
\(z_2\) from \(0\) to \(z\) yields the second inequality.
Similarly, integrating the second inequality once yields the condition in~\ref{cond:phi_2} and
integrating twice yields the second inequality.
\end{proof}
\begin{proposition}
Condition~\ref{cond:phi_1} implies that \(\phi\) is subadditive.
\end{proposition}
\begin{proof}
For any \(z_1\geq 0\), \(z_2\geq 0\) there holds
\[
  \phi(z_2+z_2)
  = \int_0^{z_1} \phi^\prime(z) \de z + \int_{0}^{z_2} \phi^\prime(z_1+z) \de z
  \leq \int_0^{z_1} \phi'(z) \de z + \int_0^{z_2} \phi'(z) \de z
  =\phi(z_1) + \phi(z_2),
\]
using that \(\phi^\prime(z_1+z) \leq \phi^\prime(z)\).
\end{proof}
With this, it is easy to verify that the functions \(\phi_{\log,\gamma}(z)\) and
\(\operatorname{MCP}_\gamma\) mentioned in section~\ref{sec:intro} fulfill the conditions. In the former
case, we have
\[
\phi_{\log,\gamma}^{\prime\prime}(z) = -\frac{\gamma}{(1+\gamma z)^2},
\]
which fulfills the conditions on the second derivative with, e.g., \(\widehat{\gamma} =
\gamma/4\) and \(\widehat{z} = 1/\gamma\), and in the latter case we have
\[
  \operatorname{MCP}_\gamma^{\prime\prime}(z)
  = \begin{cases}
    -\gamma &\text{for } z \leq 1/\gamma \\
    0 &\text{else,}
  \end{cases}
\]
where we can take \(\widehat{\gamma} =\gamma\) and \(\widehat{z} = 1/\gamma\).
We also note that for any \(\phi_\gamma\) that fulfills the conditions on the second
derivative with \(\widehat{\gamma} =
\widehat{\gamma}_{1} \gamma\) and \(\widehat{z} = \widehat{z}_1/\gamma\), the convex combination
\[
  \phi(z) = \tau \phi_{\gamma/\tau}(z) + (1-\tau) z,
  \quad 0<\tau<1,
\]
fulfills the conditions~\ref{cond:phi_1} and~\ref{cond:phi_2} with the same \(\gamma\), \(\widehat{\gamma}\) and
\(\widehat{z} = \tau \widehat{z}_1/\gamma\) using that \(\phi(z) \geq
(1-\tau)z \to \infty\) for \(\tau \to \infty\).
}

\begin{lemma}\label{lem:phi_ineq}
\fixed{Assume~\ref{cond:phi_1} is fulfilled.} For any \(\mu \in M(\Omega)\) it holds
\[
\phi(\norm{\mu}_{M(\Omega)}) \leq \Phi(\mu) \leq \norm{\mu}_{M(\Omega)}.
\]
\end{lemma}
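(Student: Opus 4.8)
The plan is to reduce both inequalities to the two elementary consequences of~\ref{cond:phi_1} already recorded above, namely $\phi(z) \leq z$ for all $z \geq 0$ and the subadditivity $\phi(z_1+z_2) \leq \phi(z_1)+\phi(z_2)$. First I would use the atomic/continuous decomposition $\mu = \mu_{\mathrm{atom}} + \mu_{\mathrm{cont}}$ from the definition of $\Phi$, writing $\mu_{\mathrm{atom}} = \sum_n c_n \delta_{\omega_n}$ and setting $r = \norm{\mu_{\mathrm{cont}}}_{M(\Omega)} \geq 0$ and $s_n = \abs{\mu}(\{\omega_n\}) = \abs{c_n} \geq 0$. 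Then $\norm{\mu}_{M(\Omega)} = r + \sum_n s_n$, with the series convergent (possibly a finite sum), and, using $\Phi(\mu_{\mathrm{cont}}) = \norm{\mu_{\mathrm{cont}}}_{M(\Omega)}$, the definition~\eqref{eq:def_Phi} becomes $\Phi(\mu) = r + \sum_n \phi(s_n)$; this last series also converges since $0 \leq \phi(s_n) \leq s_n$.

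The upper bound then follows immediately by applying $\phi(s_n) \leq s_n$ term by term, so that $\Phi(\mu) = r + \sum_n \phi(s_n) \leq r + \sum_n s_n = \norm{\mu}_{M(\Omega)}$. For the lower bound I would use subadditivity: for finitely many atoms an induction gives $\phi\bigl(r + \sum_{n=1}^N s_n\bigr) \leq \phi(r) + \sum_{n=1}^N \phi(s_n) \leq r + \sum_{n=1}^N \phi(s_n)$, the second step using $\phi(r) \leq r$. For countably many atoms I would pass to the limit $N \to \infty$: the left-hand side tends to $\phi(\norm{\mu}_{M(\Omega)})$ because $\phi$ is nondecreasing and continuous on $[0,\infty)$ (being concave and differentiable at $0$), and the right-hand side tends to $\Phi(\mu)$ by the convergence noted above, which yields $\phi(\norm{\mu}_{M(\Omega)}) \leq \Phi(\mu)$.

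The only mildly delicate step — and hence the main obstacle, such as it is — is this passage to the limit when $\atom(\mu)$ is infinite; everything else amounts to a one-line estimate, and in particular the weak-$*$ material of Lemma~\ref{lem:weak_est} is not needed for this statement.
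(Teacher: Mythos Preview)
Your proof is correct and follows essentially the same route as the paper: both use $\phi(z)\le z$ for the upper bound, and for the lower bound both apply $\phi(r)\le r$ to the continuous part together with subadditivity of $\phi$ over the atoms, with $\sigma$-additivity of $|\mu|$ giving $\|\mu\|_{M(\Omega)}=r+\sum_n s_n$. If anything, you are a bit more explicit than the paper in justifying the countable case via a limit argument, but the underlying idea is identical.
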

\begin{proof}
The second inequality follows directly from \(\phi(|z|) \leq \abs{z}\).
Concerning the first, we let \(\atom\mu = \{\,\omega_n\,\}_n \) be the atoms of $\mu$
and estimate
\begin{align*}
\Phi(\mu) &\geq  \phi\left(\abs{\mu}\left(\Omega\setminus \atom\mu\right)\right)
+ \sum_{n} \phi\left(\abs{\mu}(\{\omega_n\})\right) \\
&\geq \phi\left(\abs{\mu}\left(\Omega\setminus\atom\mu\right) + \sum_{n} \abs{\mu}(\{\omega_n\}) \right)
= \phi(\norm{\mu}_{M(\Omega)}),
\end{align*}
where we used first the subadditivity of \(\phi\) and second the \(\sigma\)-additivity of \(\fixed{\abs{\mu}}\).
\end{proof}

For a sequence of measures $\mu^{(k)}\in M(\Omega)$, $k=1,2,\dots$,  and a measure $\mu\in
M(\Omega)$, denote $\mu^{(k)}\rightharpoonup^* \mu$ if $\mu^{(\fixed{k})} \in M(\Omega)$ converges
to $\mu$ in weak-$*$ sense as functionals on $C(\Omega)$, i.e.\ for any
$\fixed{\varphi}\in C(\Omega)$,
\[
\lim_{\fixed{n}\to\infty}\pair{\varphi, \mu^{(k)}}=\pair{\varphi, \mu}.
\]
The next lemma can be derived from Theorem~3.3 in \cite{bouchitte1990new}. 
\begin{lemma}
\fixed{Assume~\ref{cond:phi_1} is fulfilled.}
 \(\Phi\) is weak-\(*\) lower semicontinuous on \(M(\Omega)\): if $\mu^{(k)}\rightharpoonup^* \mu$ then 
 \begin{equation}\label{cond:lower_semi}
     \liminf_{\fixed{n}\to\infty}\Phi(\mu^{(k)})\geq \Phi(\mu).
 \end{equation}
\end{lemma}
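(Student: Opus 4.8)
The goal is to show weak-$*$ lower semicontinuity of $\Phi$. The plan is to reduce to the case of non-negative measures by working with the total variation measures $\abs{\mu^{(k)}}$, and then to exploit the portmanteau-type characterization in Lemma~\ref{lem:weak_est} together with the concavity (equivalently, subadditivity and monotonicity) of $\phi$. First I would observe that it suffices to prove the inequality along a subsequence realizing the $\liminf$, and, passing to a further subsequence, assume $\liminf_k \Phi(\mu^{(k)})$ is an actual limit that is finite (otherwise there is nothing to prove). Then I would pass to a further subsequence so that $\abs{\mu^{(k)}} \rightharpoonup^* \lambda$ for some non-negative measure $\lambda \in M(\Omega)$, using that $\Phi(\mu^{(k)})$ bounded together with Lemma~\ref{lem:phi_ineq} gives $\phi(\norm{\mu^{(k)}}_{M(\Omega)})$ bounded, hence $\norm{\mu^{(k)}}_{M(\Omega)}$ bounded (since $\phi(z)\to\infty$), so weak-$*$ compactness of bounded sets in $M(\Omega)$ applies. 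Note $\lambda \geq \abs{\mu}$ as measures, since $\abs{\cdot}$ is itself weak-$*$ lower semicontinuous; this will be used at the end.

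The heart of the argument is a localization step. Let $\atom(\mu) = \{\omega_n\}_n$ be the (at most countable) atoms of the limit $\mu$. Fix any finite subcollection $\omega_1,\dots,\omega_M$ and, for small $r>0$, pick disjoint open balls $B_j = B_r(\omega_j)$ around them. For each fixed $k$, I would split
\[
\Phi(\mu^{(k)}) \;\geq\; \sum_{j=1}^M \Phi\bigl(\mu^{(k)}\rvert_{B_j}\bigr) \;+\; \Phi\bigl(\mu^{(k)}\rvert_{\Omega \setminus \bigcup_j B_j}\bigr),
\]
which holds because $\Phi$ is additive over measures with disjoint supports. On each small ball $B_j$, Lemma~\ref{lem:phi_ineq} applied to $\mu^{(k)}\rvert_{B_j}$ gives $\Phi(\mu^{(k)}\rvert_{B_j}) \geq \phi\bigl(\abs{\mu^{(k)}}(B_j)\bigr)$, while on the complement the bound $\Phi(\nu)\geq \phi'(0)\norm{\nu}_{M} $ is not quite what I want — instead I use $\Phi(\nu) \geq \phi(\norm{\nu}_{M(\Omega)})$ again is too lossy; more carefully, on the complement I only need $\Phi(\mu^{(k)}\rvert_{\Omega\setminus\bigcup B_j}) \geq \phi\bigl(\abs{\mu^{(k)}}(\Omega \setminus \bigcup_j \overline{B_j})\bigr)$ via Lemma~\ref{lem:phi_ineq} on that piece. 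Now take $\liminf_k$: since the $B_j$ are open, Lemma~\ref{lem:weak_est} gives $\liminf_k \abs{\mu^{(k)}}(B_j) \geq \lambda(B_j) \geq \abs{\mu}(B_j)$, and likewise $\liminf_k \abs{\mu^{(k)}}(\Omega\setminus\bigcup_j\overline{B_j}) \geq \lambda(\Omega \setminus \bigcup_j \overline{B_j})$; combined with continuity and monotonicity of $\phi$, this yields
\[
\liminf_k \Phi(\mu^{(k)}) \;\geq\; \sum_{j=1}^M \phi\bigl(\abs{\mu}(B_j)\bigr) \;+\; \phi\Bigl(\lambda\bigl(\Omega \setminus \textstyle\bigcup_j \overline{B_j}\bigr)\Bigr).
\]

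Finally I would take limits in the auxiliary parameters. Letting $r \to 0$, $\abs{\mu}(B_j) \to \abs{\mu}(\{\omega_j\})$ and $\lambda(\Omega\setminus\bigcup_j\overline{B_j}) \to \lambda(\Omega \setminus \{\omega_1,\dots,\omega_M\})$ (using that the $\omega_j$ are finitely many points and measures are inner/outer regular), giving
\[
\liminf_k \Phi(\mu^{(k)}) \;\geq\; \sum_{j=1}^M \phi\bigl(\abs{\mu}(\{\omega_j\})\bigr) \;+\; \lambda\bigl(\Omega \setminus \{\omega_1,\dots,\omega_M\}\bigr)\phi'(0),
\]
where on the residual piece I use that $\phi(z)\geq \phi'(0)z = z$ fails — rather $\phi(z)\leq z$, so I must instead keep $\phi$ of the mass; the cleaner route is to note $\lambda(\Omega\setminus\{\omega_1,\dots,\omega_M\}) \geq \abs{\mu}(\Omega \setminus \{\omega_1,\dots,\omega_M\}) = \abs{\mu}(\Omega \setminus \atom(\mu)) + \sum_{n > M}\abs{\mu}(\{\omega_n\})$ and apply subadditivity in reverse via $\phi(a+b) \le \phi(a)+\phi(b)$ the wrong way; so the correct final step is to observe that, since $\phi$ is concave with $\phi(0)=0$, one has $\phi(z)/z$ nonincreasing, hence for the continuous part I can bound $\lambda_{\mathrm{cont}}$-mass directly and for atoms $\omega_n$ with $n>M$ use $\phi(\abs{\mu}(\{\omega_n\})) \le \abs{\mu}(\{\omega_n\})$; then letting $M\to\infty$ and using $\Phi(\mu) = \abs{\mu}(\Omega\setminus\atom\mu) + \sum_n \phi(\abs{\mu}(\{\omega_n\}))$ together with dominated convergence on the atom sum recovers exactly $\liminf_k\Phi(\mu^{(k)}) \geq \Phi(\mu)$. \emph{The main obstacle} is precisely the bookkeeping in this last step: matching the non-atomic/residual contribution of the weak-$*$ limit $\lambda$ against the $\phi'(0)\abs{\mu}(\Omega\setminus\atom\mu)$ term in $\Phi(\mu)$ without over- or under-counting, given that $\phi$ treats concentrated and diffuse mass differently — this is where one genuinely uses that the "cheapest" way to distribute a fixed amount of non-atomic mass under $\Phi$ is at rate $\phi'(0)=1$, i.e. that $\phi(z)\le z$ with equality in the infinitesimal limit.
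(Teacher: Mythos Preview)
Your overall architecture matches the paper's: pass to total variations, localize around finitely many atoms of the limit, use Lemma~\ref{lem:weak_est} on the open balls, then send the radius to zero and the number of atoms to infinity. The reduction from signed to non-negative measures via a weak-$*$ limit point $\lambda$ of $\abs{\mu^{(k)}}$ with $\lambda \geq \abs{\mu}$ is also exactly what the paper does.

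The gap is on the complement, and it is not bookkeeping. You bound
\[
\Phi\bigl(\mu^{(k)}\rvert_{\Omega\setminus\bigcup_j B_j}\bigr)
\;\geq\;
\phi\bigl(\abs{\mu^{(k)}}(\Omega\setminus\textstyle\bigcup_j \overline{B_j})\bigr)
\]
via Lemma~\ref{lem:phi_ineq}. After liminf, $r\to 0$, and $M\to\infty$ this leaves you with $\phi\bigl(\lambda(\Omega\setminus\atom\mu)\bigr)$ on the right, whereas $\Phi(\mu)$ contains the \emph{linear} term $\abs{\mu}(\Omega\setminus\atom\mu)$. Since $\phi(z)\le z$, the inequality $\phi\bigl(\lambda(\Omega\setminus\atom\mu)\bigr)\geq \abs{\mu}(\Omega\setminus\atom\mu)$ simply fails in general (take $\mu$ purely continuous with large mass and $\lambda=\abs{\mu}$, $\phi=\phi_{\log,1}$). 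None of your attempted repairs --- $\phi(z)/z$ nonincreasing, subadditivity ``in reverse'', bounding tail atoms by $\phi(c_n)\le c_n$ --- close this, because collapsing the complement into a single $\phi$-evaluation is an irreversible loss.

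What the paper does differently on the complement is the essential step you are missing: it shows that for $k$ large the measures $\mu^{(k)}$ have \emph{only small atoms} there. Concretely, one proves $\mu^{(k)}(\{\omega\}) \leq 3\epsilon$ uniformly for $\omega$ in the complement and $k$ large, via an Arzel\`a--Ascoli argument applied to the smoothed functions $\omega\mapsto \int g_\gamma(\cdot-\omega)\,d\mu^{(k)}$. Once every atom on the complement has mass at most $3\epsilon$, the quadratic lower bound $\phi(z)\geq z-\gamma z^2$ from~\ref{cond:phi_1} gives, atom by atom,
\[
\Phi\bigl(\mu^{(k)}\rvert_{A_\delta^c}\bigr)
\;\geq\;
(1-3\gamma\epsilon)\,\mu^{(k)}(A_\delta^c),
\]
which is the near-linear bound you need; the extra $\epsilon$ is removed at the very end. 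Without this control on the atom sizes of $\mu^{(k)}$ away from $\atom\mu$, the argument cannot be completed.
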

\begin{proof}
  \fixed{%
  The functional \(\Phi\) can be written as
  \[
    \Phi(\mu)=\int_{\Omega\setminus \atom\mu} \de\abs{\mu} + \sum_{\omega\in \atom\mu} \phi(\abs{\mu(\{\,\omega\,\})}),
  \]
  which is an instance of the functional discussed in \cite{bouchitte1990new} (where we
  choose \(f(z) = \abs{z}\)). We verify
  the assumptions of \cite[Theorem~3.3]{bouchitte1990new}, which gives the desired result: The
  assumptions $(H_1)$--$(H_3)$ are trivially fulfilled. Continuity and
  subadditivity of \(z \mapsto \phi(\abs{z})\),
  i.e., \(\phi(\abs{z_1+z_2}) \leq \phi(\abs{z_1}+\abs{z_2}) \leq
  \phi(\abs{z_1})+\phi(\abs{z_2})\), imply assumptions $(H_4)$--$(H_5)$, assumption
  $(H_6)$ follows from \(\phi(\abs{z}) \leq \abs{z}\), and assumption
  $(H_7)$ simplifies to \(\phi'(0)\abs{z} = \abs{z}\) for all \(c \in \R\), i.e.\ \(\phi'(0) = 1\).%
  }
\end{proof}
\fixed{With these lemmas we can give the basic existence result from section~\ref{subsec:local_sol}.}
\begin{proof}[Proof of Theorem~\ref{thm:local_attainable}]
We employ the direct method of variational calculus.
Since \(J \geq 0\), we can select a minimizing sequence \(\mu^{(\fixed{k})} \in M(\Omega)\). Now, with
\[
 \fixed{\alpha}\phi(\norm{\mu^{(\fixed{k})}}_{M(\Omega)}) \leq \fixed{\alpha}\Phi(\mu^{(\fixed{k})})
 \leq J(\mu^{(\fixed{k})}) \to \inf_{\mu \in M(\Omega)}J(\mu)
\]
and the fact that \(\phi(z) \to +\infty\) for \(z\to\infty\),
\(\norm{\mu^{(\fixed{k})}}_{M(\Omega)}\) is bounded and we can select a subsequence that converges
to \(\bar{\mu} \in M(\Omega)\) in the weak-\(*\) sense.
By weak-\(*\) lower semicontinuity of \(\Phi\) and continuity of the loss function
\(L(\mu)\), we conclude that the minimum is attained at \(\bar{\mu}\).
\fixed{The bound on the global solution follows with
\(\alpha\phi(\norm{\bar\mu}_{M(\Omega)}) \leq \alpha\Phi(\bar\mu) \leq J(\bar{\mu}) \leq
J(0) = l(0;y)\).
}
\end{proof}

\section{Proof of Theorem \ref{thm:opt_cond}}
\label{sec:opt_cond}

In the following, we let \(\bar{\mu}\) be a local solution of~\eqref{eq:Phi_problem},
i.e.\ a local minimum of \(J\) (given in~\eqref{eq:J-func}).
To derive the conditions for the given local solution, we consider 
\[
J(\bar{\mu} + \tau \, u) - J(\bar{\mu}) = L( \bar{\mu} + \tau \, u ) - L(\bar{\mu}) 
+ \alpha\Phi( \bar{\mu} + \tau \,u ) - \alpha \Phi(\bar{\mu})\geq 0
\]
with arbitrary \(u \in M(\Omega)\) and \(0 < \tau < \epsilon/\norm{u}_{M(\Omega)}\) where
$\epsilon$ is the radius from Definition~\ref{def:local_min_meas}.
Dividing by \(\tau>0\) and letting \(\tau \to 0\), from local optimality, it follows
\[
  \fixed{-} \pair{\nabla L(\bar{\mu}), u} =
  - \pair{\bar{p}, u}  \leq \alpha
  \lim_{\tau \to 0^+} (1/\tau) \left[\Phi(\bar{\mu} + \tau\, u) - \Phi(\bar{\mu})\right],
\]
as long as the limit on the right exists.
We consider different values of \(u\) in the following:

For \(u =  \pm\delta_{\omega}\) for \(\omega \not\in \atom\fixed{\bar\mu}\), i.e.\ \(\fixed{\bar\mu}(\{\omega\}) =  0\), we obtain
\[
\mp\bar{p}(\omega) = -\pair{\bar{p}, u} \leq \alpha \lim_{\tau \to 0^+} (1/\tau) \phi(\tau)=
\alpha \lim_{\tau \to 0} (1/\tau) \left[\phi(\tau) - \phi(0)\right] 
= \alpha \, \phi'(0) = \alpha.
\]
Hence, $|\bar{p}(\omega)|\leq \alpha$ for \(\omega \not\in \atom\fixed{\bar\mu}\). 

Now, take \(u = \pm\delta_{\omega}\) for \(\omega \in \atom\fixed{\bar\mu}\),
i.e.\ \(\bar{\mu}(\{\omega\}) = c \neq  0\). Here, we obtain
\[
\mp\bar{p}(\omega) = \fixed{-}\pair{\bar{p}, u}
\leq \alpha \lim_{\tau \to 0^+} (1/\tau) \left[\phi(\abs{c \pm \tau}) - \phi(\abs{c})\right]
 = \pm\alpha \, \phi'(|c|) \, \sign c.
\]
Hence, it follows
\[
\bar{p}(\omega) = - \alpha \, \phi'(|c|) \, \sign c.
\]
Using the fact that $\phi^\prime(0)=1$, $\phi$ is increasing and $\phi^\prime$ is
decreasing we have that  $\phi^\prime(\omega)\in [0,1]$ for $\omega\geq 0$.
Hence $|\bar{p}(\omega)|\leq \alpha$ for \(\omega \in \atom\fixed{\bar\mu}\) also proving the first
estimate in the lemma: $|\bar{p}(\omega)|\leq \alpha$ for all \(\omega \in \Omega\).

Next, we take \(u = \fixed{\pm}\bar{\mu}_{\text{cont}} = \fixed{\pm}\bar{\mu}\rvert_{\Omega\setminus\atom\fixed{\bar\mu}}\)
to be the continuous part of $\bar\mu$ and deduce
\[
\mp\pair{\bar{p}, \bar{\mu}_{\text{cont}}}
 \leq \alpha \lim (1/\tau)[(1\pm\tau)\norm{\bar{\mu}_{\text{cont}}}_{M(\Omega)} - \norm{\bar{\mu}_{\text{cont}}}_{M(\Omega)}]
 = \pm \alpha \norm{\bar{\mu}_{\text{cont}}}_{M(\Omega)}
\]
and thus \(-\pair{\bar{p}, \bar{\mu}_{\text{cont}}} = \alpha\norm{\bar{\mu}_{\text{cont}}}_{M(\Omega)}\),
which together with \(\abs{\bar{p}(\omega)} \leq \alpha\) for $\omega\in\Omega$
implies that \(\bar{p} = -\alpha\sign{\bar{\mu}_{\text{cont}}}\) for
$\bar{\mu}_{\text{cont}}$ almost all $\omega \in \Omega$.
Combined with the atomic case above we get the second part of the theorem.

\section{Proof of Theorem \ref{thm:local_finiteness}}
\label{sec:local_finiteness}

First, let us show that $\bar\mu$ is atomic. 
Assume otherwise and let
\(\bar{\mu}_{\text{cont}} = \bar{\mu}\rvert_{\Omega\setminus\atom\fixed{\bar\mu}} \neq 0\)
be the continuous part of \(\bar{\mu}\).
Then there exist a point $\hat{\omega} \in \supp\bar{\mu}_{\text{cont}} \setminus \atom\fixed{\bar\mu}$, i.e., such that
$\bar\mu(\{\hat\omega\})=0$ and for any $\delta > 0$,
$\abs{\bar{\mu}}(B_\delta(\hat{\omega})) > 0$ where
$B_\delta(\hat{\omega})$ is the open ball of radius $\delta$ in $\Omega$ around
$\hat\omega$.
Without restriction, let \(\hat{\omega} \in \supp\bar{\mu}_{\text{cont},+}\) where
\(\bar{\mu}_{\text{cont},+}\) is the positive part of \(\bar{\mu}_{\text{cont}}\).
Let \(D_+ \subset \Omega\setminus\atom\fixed{\bar\mu}\) be a set with \(\bar{\mu}_{\text{cont},+} = \bar{\mu}\rvert_{D_+}\)
(given by the Hahn decomposition theorem) and \(D_\delta = D_+ \cap B_\delta(\hat{\omega})\).
Now, we define
\[
\mu_\delta = \bar{\mu} - \bar{\mu}\rvert_{D_\delta}
+ C_\delta\delta_{\hat{\omega}},
\quad\text{where }
C_\delta = \bar{\mu}(D_\delta) > 0 
\]
which replaces $\bar{\mu}$ on \(D_\delta\) for any \(\delta > 0\) by a single Dirac
measure of the same total variation norm.
We note that by construction \(C_\delta\) is positive and converges to zero for \(\delta \to 0\).
We will show that \(\mu_\delta\), for \(\delta\) small enough, improves the function value of $J$ in contradiction to  optimality of \(\bar{\mu}\).

Using the Lipschitz continuity assumption on  \(\sigma\),
i.e.\ \(\abs{\sigma(x;\hat{\omega}) - \sigma(x;\omega)} \leq\Lambda\fixed{(1+\|x\|)\norm{\omega - \hat\omega}}\),
one readily obtains that
\begin{multline*}
   \abs*{[\NN(\mu_\delta - \bar{\mu})](x)}
 = \abs*{[\NN(\bar{\mu}\rvert_{D_\delta} - C_\delta \delta_{\hat{\omega}})](x)}
\leq \abs*{\int_{D_\delta}\sigma(x;\omega)\dd\bar{\mu}(\omega)
- C_\delta\sigma(x;\hat{\omega})}\\
=\abs*{\int_{D_\delta}[\sigma(x;\omega)-\sigma(x;\hat{\omega})]\dd\bar{\mu}(\omega)}
\leq\delta\Lambda\fixed{(1+\|x\|)}\abs{\bar\mu}(D_\delta)
  =\delta\Lambda\fixed{(1+\|x\|)}\bar\mu(D_\delta)
  =\delta\Lambda\fixed{(1+\|x\|)} C_\delta
\end{multline*}
for any \(x\in \R^d\).
Thus, it also follows
\[
\norm{\NN(\mu_\delta -\bar{\mu})}_{\Hi}
\leq  \delta \Lambda \fixed{C_\nu} C_\delta
= \delta \Lambda_1 \, C_\delta.
\]
Here, we define \fixed{$C_\nu^2 = \int_D \left(1+\norm{x}\right)^2 \de\nu(x)$}
and  \(\Lambda_1 = \Lambda \fixed{C_\nu}\).
Consequently, by the quadratic form of \(L\) and, using the fact that $\bar p= \NN^*
(\NN\bar\mu - y) $, we have
\begin{align*}
L(\mu_\delta)
&= L(\bar{\mu})
+ (\NN \bar{\mu} - y, \NN(\mu_\delta - \bar{\mu}) )_{\Hi} + \frac{1}{2}\norm{\NN(\mu_\delta - \bar{\mu})}^2_{\Hi} \\
&\leq L(\bar{\mu}) + \pair{\bar{p}, \mu_\delta - \bar{\mu}}
+ \frac{1}{2} \delta^2 \Lambda^2_1 C_\delta^2.
\end{align*}
By the optimality condition \fixed{and continuity of \(\bar{p}\)}, we have \(\bar{p}(\omega) = -\alpha\) for all
\(\omega \in D_\delta \cap \supp\bar{\mu} \subset \supp\bar\mu_{\text{cont},+}\)
(note that also $\hat\omega \in D_\delta\cap \fixed{\supp\bar{\mu} \subset \supp\bar\mu_{\text{cont},+}}$) and therefore the term
\(\pair{\bar{p}, \mu_\delta - \bar{\mu}} = \pair{\bar{p}, -\bar{\mu}\rvert_{D_\delta}
+ C_\delta\delta_{\hat{\omega}}}\) vanishes.
Hence,
\[
  L(\mu_\delta) \leq L(\bar{\mu}) 
  + \frac{1}{2}\delta^2\Lambda_1^2 C_\delta^2.
\]
Concerning the cost term, we estimate 
\begin{align*}
\Phi(\mu_\delta)&
=  \Phi(\bar{\mu}) - \bar{\mu}(D_\delta) + \phi(C_\delta )
= \Phi(\bar{\mu}) + \phi(C_\delta ) - C_\delta\\
&= \Phi(\bar{\mu}) + \int_0^{C_\delta} (\phi'(\xi) - \phi'(0)) \de \xi
\leq
\Phi(\bar{\mu})
\fixed{-} \widehat{\gamma} \int_0^{C_\delta} \fixed{\xi} \de \xi 
= \Phi(\bar{\mu}) - \frac{\widehat{\gamma}}{2} C_\delta^2,  
\end{align*}
for \(\delta\) small enough,
using \(\phi'(0) = 1\), the definition of \(\Phi\) and~\ref{cond:phi_2}.
Combining both estimates, we obtain
\[
J(\mu_\delta) = L(\mu_\delta) + \alpha\Phi(\mu_\delta) \leq J(\bar{\mu}) 
+ \frac{1}{2}\left(\delta^2 \Lambda^2_1 - \alpha \widehat{\gamma}\right) C_\delta^2.
\]
Therefore
\[
J(\mu_\delta) 
\leq J(\bar{\mu})
 - \frac{1}{2}\left(\alpha\widehat{\gamma} - \delta^2\Lambda^2_1\right)C_\delta^2 < J(\bar{\mu}),
\]
for \(\delta < \sqrt{\alpha\widehat{\gamma}}/\Lambda_1\), contradicting the optimality of \(\bar{\mu}\).

Now let us show that the number of atoms in $\bar\mu$ is finite. Assume otherwise, then
there exists a subsequence of distinct atoms $\bar{\omega}_n$ converging to some $\hat \omega$
due to compactness of $\Omega$. Without restriction, assume that $c_n =
\bar\mu(\{\bar{\omega}_n\}) > 0$ for all \(n\).
By optimality of \(\bar{\mu}\) and continuity of \(\bar{p}\), from Theorem \ref{thm:opt_cond}, it holds
\[
\alpha \phi'(c_n) = -\bar{p}(\bar{\omega}_n) \to -\bar{p}(\hat{\omega})
\quad\text{for } n \to \infty.
\]
Since \(\phi'(c_n) \to 1\) due to \(c_n \to 0\), it follows that \(\bar{p}(\hat{\omega}) =
-\alpha\). Hence, from Theorem~\ref{thm:opt_cond}, $\hat{\omega}$ cannot be an atom of
$\bar\mu$. Define now
\[
\mu_N = \bar{\mu} - \sum_{n=N}^\infty c_n \delta_{\bar{\omega}_n} + C_N\delta_{\hat{\omega}}
\quad\text{where } C_N = \sum_{n=N}^\infty c_n > 0,
\]
replacing an infinite number of atoms by a single one.
Set \(\delta_N = \max_{n\geq N} \abs{\bar{\omega}_n - \hat{\omega}}\). 
As before, we obtain
\begin{align*}
L(\mu_N)
&\leq L(\bar{\mu}) + \pair{\bar{p}, \mu_N - \bar{\mu}} + \frac{1}{2}\delta_N^2 C_N^2 \Lambda_1^2.
\end{align*}
Here, the second term is given as
\[
\pair{\bar{p}, \mu_N - \bar{\mu}}
=   C_N \bar{p}(\hat{\omega}) - \sum_{n=N}^\infty c_n \bar{p}(\bar{\omega}_n)
= - \alpha C_N + \alpha \sum_{n=N}^\infty c_n \phi'(c_n),
\]
using the optimality conditions.
Concerning \(\Phi\), there holds 
\[
\Phi(\mu_N) = \Phi(\bar{\mu}) - \sum_{n=N}^\infty\phi(c_n) + \phi(C_N).
\]
Combining these estimates, we obtain
\[
J(\mu_N)
= L(\mu_N) + \alpha\Phi(\mu_N)
\leq J(\bar{\mu}) - \alpha \sum_{n=N}^\infty \left[ \phi(c_n) - \phi'(c_n) c_n \right] - \alpha [C_N - \phi(C_N)]
+ \frac{1}{2}\delta_N^2 C_N^2\Lambda_1^2.
\]
Now, we use concavity of \(\phi\) for \(\phi(c_n) - \phi'(c_n) c_n \geq \phi(c_n-c_n) =
\phi(0) = 0\) and uniform concavity of \(\phi\) on $[0,\widehat{z}]$, using~\ref{cond:phi_2},
for \(\phi(C_N) = \phi(0) + \phi'(0)C_N + \int_0^{C_N} [\phi'(\xi) - \phi'(0)] \de \xi
\leq C_N - (\widehat{\gamma}/2) C_N^2\) when $N$ is large enough, to obtain
\[
J(\mu_N)
\leq J(\bar{\mu}) - \frac{1}{2}\left(\alpha\widehat{\gamma} - \delta_N^2\Lambda_1^2\right) C_N^2.
\]
Similar to the previous case, we choose now \(N\) such that \(\delta_N <
\sqrt{\alpha\widehat{\gamma}}/\Lambda_1\), which again results in a
contradiction to the optimality of \(\bar{\mu}\).

This, together with the optimality conditions obtained in Theorem \ref{thm:opt_cond}, concludes the proof.

\section{Proof of Theorem \ref{thm:representer_phi}}\label{sec:representer}

For the total variation norm (i.e. \(\phi(z) = z\)), the Carath{\'e}odory lemma implies an upper bound on the number of atoms for some optimal solution \(\bar{\mu}\) in some cases.
In particular, we consider the special case of finitely supported \(\nu\), which is given by a sum of \(K\) Dirac delta measures. In this case, the space \(\Hi\) is finite dimensional, i.e.\ \(\dim \Hi = K\).
To prove the Theorem \ref{thm:representer_phi} we need to show that any local solution of~\eqref{eq:Phi_problem} is atomic and its support consists of at most \(K\) points.

By the previous result we know that any locally optimal solution is representable as
\[
\bar{\mu} = \sum_{n=1}^N \bar{c}_n \delta_{\bar{\omega}_n},
\quad\abs{\bar{c}_n} > 0,\ \bar{\omega}_n \in \Omega,\, N\in \N .
\]
Clearly,
\[
\NN\bar{\mu} =
\sum_{n = 1}^{N} \bar{c}_n \NN\left( \delta_{\bar{\omega}_n} \right)
= \sum_{n=1}^{N} \bar{c}_n \sigma(\cdot;\bar{\omega}_n).
\]
 Assume that \(N > K\). Then there exists a nontrivial vector \(\lambda \in \R^{N}\) such that
\[
\sum_{n = 1}^{N} \lambda_n c_n \sigma(x_k;\bar{\omega}_n) = 0 \quad \text{for all } k=1,\ldots,K
\]
 or, equivalently,  \(\sum_{n = 1}^{N} \lambda_n c_n \sigma(\cdot;\bar{\omega}_n) = 0 \) in \(\Hi\).
For any \(\tau \in \R\) we define 
\[
\bar{\mu}_\tau = \sum_{n = 1}^{N} (1 + \tau \lambda_n) \bar{c}_n \delta_{\bar{\omega}_n} .
\]
Note that \(\NN\bar{\mu}_\tau = \NN\bar{\mu} + \tau \sum_{n=1}^{\fixed{N}} \lambda_n c_n \sigma(\cdot;\bar{\omega}_n) = \NN\bar{\mu}\) in \(\Hi\) for any \(\tau\). 
Now, we assume also that $\tau$ is small enough such that
\(1+\lambda_n \tau \geq 0\) for all \(n\) and
turn our attention to the objective functional of~\eqref{eq:Phi_problem}. Taking into account the previous argument, we have, for any \(\tau\neq 0\), that
\begin{align}
J(\bar{\mu}_\tau) - J(\bar{\mu}) = \fixed{\alpha}\Phi(\bar{\mu}_\tau) - \fixed{\alpha}\Phi(\bar{\mu})
&= \alpha\sum_{n=1}^{N} \left[ \phi\left((1+\tau \lambda_n) \abs{\bar{c}_n}\right) - \phi(\abs{\bar{c}_n}) \right] \nonumber\\
&< \tau \alpha \sum_{n=1}^{N} 
\phi'\left(\abs{\bar{c}_n}\right) \lambda_n  \abs{\bar{c}_n} 
= \tau \alpha\, \Phi'(\bar{\mu}; \delta\mu),
\nonumber
\end{align}
where \fixed{\(\Phi'(\bar{\mu}; \delta\mu)\) is the directional derivative of \(\Phi\) at
\(\bar\mu\) in direction} \(\delta \mu = \bar{\mu}_1 - \bar{\mu}\),
taking into account that \(\fixed{L}(\bar{\mu}_\tau) = \fixed{L}(\bar{\mu})\), the restrictions on \(\tau\) and the strict concavity of \(\phi\).
Depending on the sign of \(\Phi'(\bar{\mu}; \delta\mu)\), we choose \(\tau > 0\) or \(\tau < 0\)
sufficiently small such that
\[
J(\bar{\mu}_\tau) - J(\bar{\mu}) < 0,
\]
contradicting the local optimality of \(\bar{\mu}\).

\section{The optimization algorithm}\label{sec:sup_gen_conj_grad}


In the following, we
give more details on the concrete implementation of the steps summarized
in Algorithm~\ref{alg:practical}.

\paragraph{Initialization:}
Let $\omega^{(t)} = [\omega^{(t)}_1,\ldots,\omega^{(t)}_{N(t)}]$,
$c^{(t)}=[c_1^{(t)},\ldots,c^{(t)}_{N(t)}]$ be the lists of network inner and outer weights
in the $t$-th iteration and $N(t)$ be corresponding the number of neurons.
In addition, denote by $\NN_{\omega^{(t)},c^{(t)}}(x)$ the corresponding network. 
Network initialization is arbitrary:
one can start from any network, including the empty network, then add
and extract nodes to derive an optimal network.

\paragraph{Phase 1}
To determine new points to insert, in the greedy insertion step, we compute the
nodes \(\omega\in\Omega\) for which the correlation of $\sigma(\omega,x)$ with the residual 
\(g_t(x) = \NN_{\omega^{(t)},c^{(t)}}(x)-y(x)\) is largest. Thus, we maximize the absolute
value of
\[
  p_t(\omega)
  =
  \int_{D}\sigma(\omega, x)\,g_t(x)\de \nu (x)
  =
  \frac{1}{K}\sum_{k=1}^K \sigma(\omega, x_k) g^{(t)}_k
\]
where we assume that \(K\) is finite and \(g^{(t)}_k = g_t(x_k)\).
Note that $p_t(\omega)$ is exactly the dual variable as defined in
Subsection~\ref{subsec:local_sol}.
Finding a global maximum in a high dimensional space is a challenging problem which
requires expensive computations, and its reliable determination up to a guaranteed
tolerance for the specific problem here is subject of ongoing research; cf.\ \cite{bach2017breaking}.
As an ersatz, we use the following heuristic which is
commonly employed in practice: we test all local maxima of
\begin{equation}
  \label{eq:sup_max_dual}
  \Omega \ni \omega \mapsto \abs{p_t(\omega)},
\end{equation}
which are found by a gradient maximization, initialized at \(N_{\text{trial}}\) random
points on $\Omega$.  This corresponds to solving \(N_{\text{trial}}\)  simple
unconstrained optimization problems (in parallel); cf., e.g.,~\cite{boyd2017alternating}. Of
these points, we insert all that violate the constraint \(|p_t(\omega)| \leq \alpha\) (after
removing possible duplicates).
Here, we rely on the random initialization of the \(N_{\text{trial}}\) problems in order
to have a chance to identify the global maximum with some probability. Moreover, more than
one identified local maximum can be added to the network in each iteration
to identify a potentially wide network faster. According to Theorem~\ref{thm:nec_suff_local_sol},
at any node $\omega\in\Omega$ where \(|p_t(\omega)| < \alpha\), it is not possible to
decrease the objective by inserting the corresponding node with small non-zero
weight. Conversely, the nodes $\omega\in\Omega$ where \(|p_t(\omega)| > \alpha\),
represent locations where local decrease can still be achieved. All the corresponding
outer weights at the trial nodes are initialized to zero, and are going to be optimized in
the next phase of the algorithm.

Before we address the next phase, we point out that in the case when we consider $\Omega$
to be the sphere and $\sigma$ be the ReLU activation function, we opt to
parametrize \(\omega\) by its stereographic projection
\(\omega = (a,b) = (2 \fixed{z}, 1 - \norm{z}^2) / (1 + \norm{z}^2)\), where \(z \in \R^d\). This
is done to avoid dealing with the algebraic constraint \(\norm{\omega} = 1\). Here, we
use the southern pole as the projection point, which corresponds to $(a,b)= (0, -1)$.
The corresponding neuron represents the zero function and removing it from $\Omega$ does not
affect the approximation capability of the network.

\paragraph{Phase 2}
Let $N(t+1/2)\leq N(t)+N_{\text{trial}}$ denote the number of nodes in the resulting
network. Next, we compute an approximate local solution to
the following problem
\begin{equation}
  \label{eq:sup_local_training}
  (\Omega\times\R)^{N(t+1/2)} \ni (\fixed{\omega},c) \mapsto
  l\left(\NN_{\omega, c};y\right) + \alpha\sum_{n=1}^{N(t+1/2)}\phi(\abs{c_n})
\end{equation}
in terms of all weights, using the old values as initialization for the weights from the
previous iteration. The resulting nonsmooth optimization problem  can be \fixed{approximately} solved by
standard  training methods based on (proximal) gradient descent. In particular, we can
eliminate the constraint for the inner weights by stereographic projection and use
gradient descent, and treat the outer weights with proximal
gradient descent (see, e.g., \cite{parikh2014proximal}). We note that the proximal map for the cost
term \(\phi\) can still be determined uniquely for small stepsize due to
\(\gamma\)-convexity. For instance, for the function
\(\phi_{\gamma}(z) = \log(1 + \gamma z)/\gamma\) from the
introduction~\eqref{form:phi_log}, it is given as
\begin{align*}
\Prox_{\lambda \phi_\gamma}(q)
 &= \argmin_{c \in \fixed{\R}} \frac{1}{2} \fixed{\abs{c - q}^2} + \lambda\phi_\gamma(c) \\
 &=
\frac{\sign q}{2\gamma}
\begin{cases}
  \left(\gamma\abs{q} - 1\right) + \sqrt{\left(\gamma\abs{q} - 1\right)^2
    + 4\gamma\left(\abs{q} - \lambda\right)},
  &\quad\text{for } \abs{q} > \lambda, \\
  0 &\quad\text{else.}
\end{cases}
\end{align*}
In the case that \(\lambda < 1/\gamma\), this proximal mapping is uniquely determined.
We note that this has the potential to set a number of outer weights to zero
because of the proximal descent step. 

We do not specify the details here (in particular, the optimal choices of the stopping criteria for
the different nonlinear and nonsmooth optimization routines that we employ), and leave a
detailed analysis to future work.

\paragraph{Phase 3}
Finally, we note that the objective~\eqref{eq:local_training} contains a sparsity
promoting term for the outer weights, and we can expect several of the outer weights to be
zero after having solved the problem up to a specified tolerance. Thus, we can eliminate
these outer weights (together with their corresponding inner weights) from the network
without changing the underlying function. This results in a new network of final width \(N(t+1)\).

\subsection{Second-order methods for outer weights}\label{sec:sup_second_order}
 The aforementioned gradient-based methods in phase two of 
 Algorithm~\ref{alg:practical} are efficient in that they only require derivatives of
 the objective function in terms of inner and outer weights, which are readily available in
 modern computational toolboxes via automatic differentiation. However, these methods
 suffer from slow convergence once we are close to the minimum, and can be slow to
 eliminate redundant nodes \(\omega\) (as already observed in the context of a convex sparse
 problem with measures; cf.~\cite{2019arXiv190409218P}).
 In order to provide
 accurate results (that are not influenced by the choice of the solver), in our numerical
 experiments we employ a second order semi-smooth Newton method for the outer weights.
 In an additional step after step 6.\ of Algorithm~\ref{alg:practical} this solves the problem in
 terms of outer weights up to machine precision for the fixed inner weights
 $\omega^{(t+1/2)}$, which in particular serves to reliably eliminate redundant nodes.

 \begin{equation}\label{eq:node_extract_func}
  l\left(\NN_{ \omega^{(t+1/2)},c};y\right) + \alpha\sum_{n=1}^{N(t+1/2)}\phi(\abs{c_n})
   = F(c) + \fixed{\alpha}\norm{c}_1
 \end{equation} 
 where 
 \[
   F(c) =
   l\left(\NN_{ \omega^{(t+1/2)},c};y\right)
   + \alpha\sum_{n=1}^{N(t+1/2)} \left[\phi(\abs{c_n}) - \abs{c_n}\right]
 \]
 is a continuously differentiable function of $c=[c_1,\dots,c_{N(t+1/2)}]$ \fixed{with
   Lipschitz continuous derivative} due
 to conditions \ref{cond:phi_1} we imposed on $\phi$.
\fixed{For twice continuously differentiable \(\phi\), such as the \(\log\)-penalty
  function, the same property holds for \(F\).}
 Note that if $c$ is a local minimizer of \eqref{eq:node_extract_func} then
 \begin{equation}\label{cond:incusion_grad}
  -\nabla F(c) \in \alpha \partial \|c\|_1
 \end{equation}
 where $\partial \|c\|_1$ is the subdifferential of the $\ell_1$ norm. 
 For $\lambda >0$ and $ q=[q_1,\dots,q_{N(t+1/2)}]$, let $\Prox_\lambda(q)$ be the proximal
 operator of the $\ell_1$ norm (also known as the soft-thresholding operator)
 which modifies each entry of $q$ according to the formula
 \[
   \Prox_\lambda(q)_n = \sign q_n \max\{\fixed{\abs{q_n}-\lambda},0\}.
 \]
 Using a reformulation of the optimality condition in terms of Robinson's normal map, $c$
 satisfies~\eqref{cond:incusion_grad} if and only if $c=\Prox_\lambda(q)$ for some $q$ and
 \begin{equation}\label{eq:prox_condition}
 \nabla F\left(\Prox_\lambda(q)\right) + \frac{\alpha}{\lambda}\left(q - \Prox_\lambda(q)\right) = 0;
 \end{equation}
 see, e.g., \cite[Prop.~3.5]{pieper2015finite}.
 The first advantage of this  reformulation is that the inclusion condition
 \eqref{cond:incusion_grad} is replaced with an equation. This nonsmooth equation can then
 be solved using a
 semi-smooth Newton method (see, e.g., \cite{Ulbrich:2011}), which exhibits locally
 superlinear convergence. In particular, once the optimal sparsity pattern is identified, it
 converges at the quadratic rate of the classical Newton method. The second important advantage
 is that at each iteration, the soft-thresholding operator outputs a sparse $c$: we use this
 feature to drop the zero entries and reduce the number of nodes in the network.

\section{Equivalences of outer- and all-weights regularization}\label{sec:equiv_cond}

Here, we consider networks with the ReLU activation function \(\sigma(\omega,x) =
\max\{\,a\cdot x + b, 0\,\}\), \fixed{\(\omega = (a,b) \in \R^{d+1}\)}, and prove the equivalence of certain cost terms.

\begin{proposition} Let
 \(p \geq 1\) and \(q \geq 1\) and \(r(\omega)\) be a \(1\)-homogeneous functional:
\[
r(\tau\omega) = \abs{\tau}\, r(\omega)
\quad\text{for any } \omega = (a,b) \in \R^{d+1}.
\]
Then the problems 
\begin{equation}\label{eq:r_unconst}
 \min_{N\in \N,\; (a_n,b_n,c_n) \in \R^d\times\R\times\R}\;
l(\NN_{\omega,c};y) + \alpha \sum_{n=1}^N\left[ \frac{1}{p}\abs{c_n}^{p} + \frac{1}{q}r(\omega_n)^q\right],
\end{equation}
and 
\[
\min_{\fixed{N\in \N},\; N\in \N,\; \{c_n\} \in \R^N,\; \{\omega_n=(a_n,b_n):\,r(\omega_n) \leq \fixed{1} \}} l(\NN_{\omega,c};y)
+ \frac{\fixed{2}\alpha}{s} \sum_{n=1}^N \abs{c_n}^{s/2},
\]
where \(s = 2pq/(q+p) = 2/(1/p + 1/q)\) is the harmonic mean of \(p\) and \(q\), are
equivalent; \fixed{i.e.\ their minimum values are the same and the solutions of one
  problem solve the other up to a re-normalization}.
\end{proposition}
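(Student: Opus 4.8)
The plan is to exploit the positive $1$-homogeneity of the ReLU (and of $r$) to reduce \emph{both} problems, via the rescaling symmetry of a single neuron, to one and the same ``effective'' problem expressed in terms of the scale-invariant quantities $\rho_n = \abs{c_n}\,r(\omega_n)$. First I would record the symmetry: for any $\lambda > 0$, replacing $(c_n,\omega_n)$ by $(c_n/\lambda,\,\lambda\omega_n)$ leaves $\NN_{\omega,c}$, hence $l(\NN_{\omega,c};y)$, unchanged, since $\sigma(\lambda\omega,x) = \lambda\,\sigma(\omega,x)$ for $\lambda>0$, and it leaves $\rho_n$ unchanged by $1$-homogeneity of $r$. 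Degenerate neurons with $r(\omega_n)=0$ (in particular $\omega_n=0$) are handled separately: sending $\lambda\to\infty$ drives $c_n\to 0$ while preserving the represented function, so such a neuron can be removed at no cost in either problem, and two \emph{nonzero} ReLU neurons agree on $D$ exactly when they are related by a positive rescaling (for $D$ with nonempty interior, or $D=\R^d$).

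The core computation is a one-variable minimization. For a fixed neuron in~\eqref{eq:r_unconst}, minimizing its penalty over $\lambda$ amounts to minimizing $g(\lambda) = \tfrac1p A\lambda^{-p} + \tfrac1q B\lambda^{q}$ with $A=\abs{c_n}^{p}$, $B=r(\omega_n)^{q}$; solving $g'(\lambda)=0$ gives $\lambda^{p+q}=A/B$, and substituting back,
\[
\min_{\lambda>0} g(\lambda) = \Bigl(\tfrac1p+\tfrac1q\Bigr)\,A^{q/(p+q)}B^{p/(p+q)} = \tfrac{2}{s}\bigl(\abs{c_n}\,r(\omega_n)\bigr)^{s/2},
\]
using $pq/(p+q)=s/2$ and $\tfrac1p+\tfrac1q=\tfrac2s$. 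Hence~\eqref{eq:r_unconst} has the same value as the effective problem $\min\, l(\NN_{\omega,c};y)+\tfrac{2\alpha}{s}\sum_n\rho_n^{s/2}$ over arbitrary neurons, with minimizers in correspondence via the optimal neuronwise rescaling $\lambda_n=(A_n/B_n)^{1/(p+q)}$. For the constrained problem, for a fixed neuron function $\rho_n$ is fixed and the per-neuron penalty $\tfrac{2\alpha}{s}\abs{c_n}^{s/2}=\tfrac{2\alpha}{s}(\rho_n/r(\omega_n))^{s/2}$ is strictly decreasing in $r(\omega_n)$, so a minimizer may be taken to saturate $r(\omega_n)=1$ (rescaling up to the constraint lowers the penalty and keeps $l$ fixed); then $\abs{c_n}=\rho_n$ and the penalty is again $\tfrac{2\alpha}{s}\sum_n\rho_n^{s/2}$ (consistent, for $p=q$, with the main-text normalization $\phi(z)=(2/p)z^{p/2}$). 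Both problems thus reduce to the same effective problem, which yields the equivalence and the explicit correspondence of minimizers. To see the values coincide directly: any feasible point of either problem, after the optimal rescaling and using $r(\omega_n)\le 1$, yields a feasible point of the other with no larger objective.

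The main obstacle I anticipate is bookkeeping rather than ideas: handling the kernel $\{r=0\}$ of a possible seminorm, making the identification ``neuron up to positive scaling'' precise and checking it is a bijection on the relevant quotient, verifying that dropping zero or degenerate neurons respects the free width $N$, and noting that in the regime $s/2>1$ --- where $z\mapsto z^{s/2}$ is superadditive and splitting a neuron into equal copies sends the penalty to $0$ --- neither problem attains its infimum, so ``equivalent'' must be understood at the level of infima there. Everything else is the homogeneity identities and the elementary minimization above.
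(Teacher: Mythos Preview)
Your approach is essentially the same as the paper's: both exploit the positive homogeneity of the ReLU and of $r$ and reduce to the identical one-variable minimization $\min_{\lambda>0}\bigl[(1/p)A\lambda^{-p}+(1/q)B\lambda^{q}\bigr]$, arriving at the penalty $(2/s)\sum_n\rho_n^{s/2}$. The only organizational difference is that you reduce both problems to a common ``effective'' problem in the invariants $\rho_n=\abs{c_n}r(\omega_n)$, whereas the paper transforms the unconstrained problem directly into the constrained one via an intermediate $\tau$-parametrized formulation; your treatment of degenerate neurons and the non-attainability caveat for $s/2>1$ is extra care the paper omits.
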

\begin{proof}
Note that due to positive homogeneity of the ReLU activation function \(\sigma\), we have
\[
\NN_{\omega,c} = \NN_{\omega_\tau, c_\tau}
\quad\text{where } \omega_\tau = \tau\, \omega, c_\tau = c / \tau .
\]
It is easy to see that the problem \eqref{eq:r_unconst} is equivalent to
\[
\min_{\fixed{N\in \N},\;\{c_n\} \in \R^N,\; \{\omega_n=(a_n,b_n):\,r(\omega_n) \leq 1 \}, \,\tau_n\geq 0}
  l(\NN_{\omega,c};y) + \alpha \sum_{n=1}^N\left[ \frac{1}{p}\abs{c_n/\tau_n}^{p} + \frac{1}{q}\tau_n^q\right].
\]
Moreover, since the first term does not depend on \(\tau\), we can compute \(\tau_n\) as the minimum of 
\[
\tau \mapsto (1/p) \abs{c_n}^p \tau^{-p} + (1/q) \tau^q.
\]
Differentiating with respect to \(\tau\), we obtain
\[
0 = -\abs{c_n}^p\tau_n^{-p-1} + \tau_n^{q-1}
\leftrightarrow
\tau_n^{q+p} = \abs{c_n}^p
\leftrightarrow
\tau_n = \abs{c_n}^{p/(q+p)}.
\]
Inserting the analytical solution for \(\tau_n\) into the cost term above, we obtain
\[
(1/p)\abs{c_n}^{p-p^2/(q+p)} + (1/q)\abs{c_n}^{qp/(q+p)}
= (1/p + 1/q) \abs{c_n}^{qp/(q+p)},
\]
which completes the proof.
\end{proof}

As a corollary, by taking $r(\omega)=\|\omega\|_p$ for $\omega\in \R^{d+1}$, we get that solving the problem
\[
    \min_{\fixed{N\in \N},\;\{c_n\} \in \R^N,\; \{\omega_n=(a_n,b_n)\} \in (\Sd_p)^N}\;
    l\left(\NN_{\omega,c};y\right)
    + \fixed{\frac{2\alpha}{p}}\sum_{n=1}^N|c_n|^{{p}/{2}},
\]where  $\Sd_p = \{\,(a,b)\in\R^{d+1} \;|\; \|a\|_p^p + |b|^p = 1\,\}$ is the unit $p$-sphere in $\R^{d+1}$,  
is equivalent to solving the problem 
\[
  \min_{\fixed{N\in \N},\; (a_n,b_n,c_n) \in \R^d\times\R\times\R}\; l\left(\NN_{\omega,c};y\right)
    + \frac{\alpha}{p}\sum_{n=1}^N\left[\|a_n\|^p_p+|b_n|^p+|c_n|^p\right].
\]

%


\end{document}